\documentclass[a4paper,11pt]{article}
\usepackage{amscd,amsthm,amsfonts,amscd,epsf,amsmath,amssymb,latexsym,multicol}
\usepackage[latin1]{inputenc}
\usepackage{pstricks,pst-node,cite,epsfig}
\usepackage{graphicx}
\usepackage{xcolor}
\usepackage{mathrsfs}
\usepackage{textcomp}
\usepackage{setspace}
\usepackage{array}
\usepackage{float}
\usepackage{fancyhdr}
\usepackage{url}
\usepackage{pgf,tikz}
\usetikzlibrary{arrows,automata,positioning,decorations.pathreplacing,decorations.markings}
\usepackage[margin=2.5cm]{geometry}
\usepackage{cite,tabularx,
caption,fancyhdr,color,indentfirst}

\begin{document}

\theoremstyle{definition}
\newtheorem{dfn}{Definition}
\newtheorem{thm}{Theorem}
\newtheorem{lem}[thm]{Lemma}
\newtheorem {pro}[thm]{Proposition}
\newtheorem{cor}[thm]{Corollary}
\newtheorem{rmk}[thm]{Remark}

\title{\textsc{\Huge Cross-connections of completely simple semigroups}\footnote{Some of the results of this paper was presented at the International Conference in Algebra and its Applications, Aligarh Muslim University, India held in December 2014.}}
\author{Azeef Muhammed P. A. \footnote{The author wishes to acknowledge the financial support (via JRF and SRF) of the Council for Scientific and Industrial Research, New Delhi, India in the preparation of this article.}\\ \small Department Of Mathematics, University Of Kerala, \\ \small Trivandrum, Kerala-695581, India. \\ \small \emph{azeefp@gmail.com} \and A. R. Rajan \footnote{The author acknowledges the financial support of the Kerala State Council for Science, Technology and Environment, Trivandrum (via the award of Emeritus scientist) in the preparation of this article. } \\ \small Department Of Mathematics, University Of Kerala, \\ \small Trivandrum, Kerala-695581, India. \\ \small \emph{arrunivker@yahoo.com } }

\date{September, 2015 }
\maketitle

\begin{abstract}
A completely simple semigroup $S$ is a semigroup without zero which has no proper ideals and contains a primitive idempotent. It is known that $S$ is a regular semigroup and any completely simple semigroup is isomorphic to the Rees matrix semigroup $\mathscr{M}[G;I,\Lambda;P]$ (cf. \cite{rees}).\\ 
In the study of structure theory of regular semigroups, K.S.S. Nambooripad introduced the concept of normal categories to construct the semigroup from its principal left(right) ideals using cross-connections. A normal category $\mathcal{C}$ is a small category with subobjects wherein each object of the category has an associated idempotent normal cone and each morphism admits a normal factorization. A cross-connection between two normal categories $\mathcal{C}$ and $\mathcal{D}$ is a \emph{local isomorphism} $\Gamma : \mathcal{D} \to N^\ast\mathcal{C}$ where $N^\ast\mathcal{C}$ is the normal dual of the category $\mathcal{C}$.\\
In this paper, we identify the normal categories associated with a completely simple semigroup $S = \mathscr{M}[G;I,\Lambda;P]$ and show that the semigroup of normal cones $T\mathcal{L}(S)$ is isomorphic to a semi-direct product $G^\Lambda \ltimes \Lambda$. We characterize the cross-connections in this case and show that each sandwich matrix $P$ correspond to a cross-connection. Further we use each of these cross-connections to give a representation of the completely simple semigroup as a cross-connection semigroup.\\[.2cm]
Keywords : Normal Category, Completely simple semigroup, Normal cones, Cross-connections, Sandwich matrix.\\
AMS 2010 Mathematics Subject Classification : 20M17, 20M10, 20M50.
\end{abstract}
\section{Introduction}
In the study of the structure theory of regular semigroups, T.E. Hall (cf. \cite{hall}) used the ideal structure of the regular semigroup to analyse its structure. P.A. Grillet (cf. \cite{gril}) refined Hall's theory to abstractly characterize the ideals as \emph{regular partially ordered sets} and constructing the fundamental image of the regular semigroup as a cross-connection semigroup. K.S.S. Nambooripad (cf. \cite{cross}) generalized the idea to any arbitrary regular semigroups by characterizing the ideals as \emph{normal categories}.  A normal category $\mathcal{C}$ is a small category with subobjects wherein each object of the category has an associated idempotent normal cone and each morphism admits a normal factorization. All the normal cones in a normal category form a regular semigroup $T\mathcal{C}$ known as the semigroup of normal cones. The principal left(right) ideals of a regular semigroup $S$ with partial right(left) translations as morphisms form a normal category $\mathcal{L}(S)$. A cross-connection between two normal categories $\mathcal{C}$ and $\mathcal{D}$ is a \emph{local isomorphism} $\Gamma : \mathcal{D} \to N^\ast\mathcal{C}$ where $N^\ast\mathcal{C}$ is the normal dual of the category $\mathcal{C}$. A cross-conection $\Gamma$ determines a cross-connection semigroup $\tilde{S}\Gamma$ and conversely every regular semigroup is isomorphic to a cross-connection semigroup
for a suitable cross-connection.\\
A completely simple semigroup is a regular semigroup without zero which has no proper ideals and contains a primitive idempotent. It is known that any completely simple semigroup is isomorphic to the Rees matrix semigroup $\mathscr{M}[G;I,\Lambda;P]$ where $G$ is a group $I$ and $\Lambda$ are sets and $P = (p_{\lambda i})$ is a $\Lambda \times I$ matrix with entries in $G$(cf. \cite{rees}). Then $\mathscr{M}[G;I,\Lambda;P] = G \times I \times \Lambda$ and the binary operation is given by
$$(a,i,\lambda)(b,j,\mu) = (ap_{\lambda j}b,i,\mu) .$$
In this paper, we characterize the normal categories involved in the construction of a completely simple semigroup as a cross-connection semigroup. We show that the category of principal left ideals of $S$ -  $\mathcal{L}(S)$ has $\Lambda$ as its set of objects and $G$ as the set of morphisms between any two objects. We observe that it forms a normal category and we characterize the semigroup of normal cones arising from this normal category. We show that this semigroup is equal to the semi-direct product of $G^{\Lambda} \ltimes \Lambda$. We characterize the principal cones in this category and show that the principal cones form a regular sub-semigroup of $G^\Lambda \ltimes \Lambda$. We also show for $\gamma_1 =(\bar{\gamma_1},\lambda_k), \gamma_2= (\bar{\gamma_2},\lambda_l) \in T\mathcal{L}(S)$, $\gamma_1 \mathscr{L} \gamma_2$ if and only if $\lambda_k = \lambda_l$ and $\gamma_1 \mathscr{R} \gamma_2$ if and only if $\bar{\gamma_1}   G = \bar{\gamma_2}   G$. Further we characterize the cross-conncetions in this case and show that different cross-connections correspond to different sandwich matrices $P$. We also use these cross-connections to give a representation of the completely simple semigroup $S$ as a cross-connection semigroup $S\Gamma$ wherein each element of the semigroup is represented as a normal cone pair $(\gamma,\delta)$ and multiplication of elements is a semi-direct product multiplication.
\section{Preliminaries} 
In the sequel, we assume familiarity with the definitions and elementary concepts of category theory (cf. \cite{mac}). The definitions and results on cross-connections are as in \cite{cross}. For a category $\mathcal{C}$, we denote by $v\mathcal{C}$ the set of objects of $\mathcal{C}$. 
\begin{dfn}
A \emph{preorder} $\mathcal{P}$ is a category such that for any $ p , p' \in v\mathcal{P} $, the hom-set $\mathcal{P}(p,p')$ contains atmost one morphism.
\end{dfn}
In this case the relation $\subseteq$ on the class $v\mathcal{P}$ of objects of $\mathcal{P}$ defined by
$ p\subseteq p'\iff \mathcal{P}(p,p')\ne\emptyset $ is a quasi-order. $\mathcal{P}$ is said to be a strict preorder if $\subseteq$ is a partial order.
\begin{dfn}
Let $\mathcal{C}$ be a category and $\mathcal{P}$ be a subcategory of $\mathcal{C}$. Then $(\mathcal{C} ,\mathcal{P})$ is called a \emph{category with subobjects} if $\mathcal{P}$ is a strict preorder with $v\mathcal{P} = v\mathcal{C}$ such that every $f\in \mathcal{P}$ is a monomorphism in $\mathcal{C}$ and if $f,g\in \mathcal{P}$ and if $f=hg$ for some $h \in \mathcal{C}$, then $h\in \mathcal{P}$.
In a category with subobjects, if $f: c \to d $ is a morphism in $ \mathcal{P}$, then $f$ is said to be an \emph{inclusion}. And we denote this inclusion by $j(c,d)$.
\end{dfn}
\begin{dfn}
A morphism $e: d \to c$ is called a
\emph{retraction} if $c \subseteq d$ and $j(c,d) e = 1_{c}$.
\end{dfn}
\begin{dfn}
A \emph{normal factorization} of a morphism $f \in \mathcal{C}(c,d)$ is a
factorization of the form $f=euj$ where $e:c\to c'$ is a retraction,
$u:c'\to d'$ is an isomorphism and $j=j(d',d)$ for some $c',d' \in v\mathcal{C}$ with $ c' \subseteq c$, $ d' \subseteq d$. 
\end{dfn}
\begin{dfn}\label{dfn1}
 Let $d\in v\mathcal{C}$. A map $\gamma:v\mathcal{C}\to\mathcal{C}$ is called a \emph{cone from the base $v\mathcal{C}$ to the vertex $d$} if $\gamma(c)\in \mathcal{C}(c,d)$ for all $c\in v\mathcal{C}$ and whenever $c'\subseteq c$ then $j(c',c)\gamma(c) = \gamma(c')$. The cone $\gamma$ is said to
   be \emph{normal} if there exists $c\in v\mathcal{C}$ such that $\gamma(c):c\to c_{\gamma}$ is an isomorphism.
\end{dfn}
Given the cone $\gamma$ we denote by $c_{\gamma}$ the the \emph{vertex} of
$\gamma$ and for each $c\in v\mathcal{C}$, the morphism $\gamma(c): c \to c_{\gamma}$
is called the \emph{component} of $\gamma$ at $c$. We define $ M\gamma = \{ c \in \mathcal{C}\: :\: \gamma(c)\text{ is an isomorphism} \}$.
\begin{dfn}
A \emph{normal category} is a pair $(\mathcal{C}, \mathcal{P})$ satisfying the following :
\begin{enumerate}
\item $(\mathcal{C}, \mathcal{P})$ is a category with subobjects.
\item Any morphism in $\mathcal{C}$ has a normal factorization. 
\item For each $c \in v\mathcal{C} $ there is a normal cone $\sigma$ with vertex $c$ and $\sigma (c) = 1_c$.
\end{enumerate}
\end{dfn}
\begin{thm}(cf. \cite{cross} )
 Let $(\mathcal{C}, \mathcal{P})$ be a normal category and let $T\mathcal{C}$ be the set of all normal cones in $\mathcal{C}$. Then $T\mathcal{C}$ is a regular semigroup with product defined as follows :\\
 For $\gamma, \sigma \in T\mathcal{C}$.
\begin{equation} \label{eqnsg}
(\gamma * \sigma)(a) = \gamma(a) (\sigma(c_\gamma))^\circ
\end{equation} 
 where $(\sigma(c_\gamma))^\circ$ is the epimorphic part of the $\sigma(c_\gamma)$. Then it can be seen that $\gamma * \sigma$ is a normal cone. $T\mathcal{C}$ is called the \emph{semigroup of normal cones} in $\mathcal{C}$.
 \end{thm}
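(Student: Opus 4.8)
The plan is to verify that $(T\mathcal{C},*)$ is a semigroup which is moreover regular, in three stages: (i) the product of two normal cones is again a normal cone, so that $*$ is well defined on $T\mathcal{C}$; (ii) $*$ is associative; and (iii) every element admits a sandwich inverse. Throughout I would lean on the description of the epimorphic part coming from the normal factorization: for $f\in\mathcal{C}(c,d)$ one has $f = f^\circ\, j(\mathrm{im}\,f,d)$ with $f^\circ$ an epimorphism onto the image object $\mathrm{im}\,f\subseteq d$, and this epimorphism-inclusion factorization is unique. Two image-stability facts would be recorded first and proved from this uniqueness: precomposing by an epimorphism leaves the image and the epimorphic part adjusted only on the left, i.e. $(e\,h)^\circ = e\,h^\circ$ for $e$ epimorphic (using that a composite of epimorphisms is epimorphic); and postcomposing by an inclusion leaves the epimorphic part unchanged, i.e. $(h\,j)^\circ = h^\circ$ for $j$ an inclusion.

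For closure, the cone identity for $\gamma*\sigma$ is immediate, since $(\gamma*\sigma)(a)=\gamma(a)(\sigma(c_\gamma))^\circ$ is the right translate of the cone $\gamma$ by the fixed morphism $(\sigma(c_\gamma))^\circ$, and its vertex is the image object of $\sigma(c_\gamma)$. The substantive point is normality, and here I would use that $\gamma$, being normal, has an object $c_0\in M\gamma$ with $\gamma(c_0):c_0\to c_\gamma$ an isomorphism. Then $q:=\gamma(c_0)(\sigma(c_\gamma))^\circ$ is an epimorphism (an isomorphism followed by an epimorphism); writing its normal factorization $q=e'u'$ with $e':c_0\to c_0'$ a retraction and $u':c_0'\to c_{\gamma*\sigma}$ an isomorphism, the cone identity $\gamma(c_0')=j(c_0',c_0)\gamma(c_0)$ together with the retraction identity $j(c_0',c_0)e'=1_{c_0'}$ gives $(\gamma*\sigma)(c_0')=j(c_0',c_0)\,q=u'$, an isomorphism. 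Hence $c_0'\in M(\gamma*\sigma)$ and $\gamma*\sigma$ is normal.

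Associativity is the step I expect to be the main obstacle, since it is where the interaction of the epimorphic-part operation with composition must be controlled. Comparing the two bracketings, $((\gamma*\sigma)*\tau)(a)=\gamma(a)(\sigma(c_\gamma))^\circ(\tau(c_{\gamma*\sigma}))^\circ$ while $(\gamma*(\sigma*\tau))(a)=\gamma(a)\big(\sigma(c_\gamma)(\tau(c_\sigma))^\circ\big)^\circ$, so after cancelling $\gamma(a)$ the claim reduces to the identity $(\sigma(c_\gamma))^\circ(\tau(c_{\gamma*\sigma}))^\circ=\big(\sigma(c_\gamma)(\tau(c_\sigma))^\circ\big)^\circ$. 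Setting $f:=\sigma(c_\gamma)$ and noting $c_{\gamma*\sigma}=\mathrm{im}\,f\subseteq c_\sigma$, I would rewrite the left-hand side using the cone identity $\tau(\mathrm{im}\,f)=j(\mathrm{im}\,f,c_\sigma)\,\tau(c_\sigma)$ and the factorization $f=f^\circ j(\mathrm{im}\,f,c_\sigma)$. The two stability facts then collapse both sides to the common value $f^\circ\big(j(\mathrm{im}\,f,c_\sigma)\,(\tau(c_\sigma))^\circ\big)^\circ$: stripping the trailing inclusion inside $\tau(\mathrm{im}\,f)$ handles the left side, and pulling the leading epimorphism $f^\circ$ out of the epimorphic part handles the right side. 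The delicate bookkeeping is exactly that the two bracketings evaluate a component of $\tau$ at different objects, $c_{\gamma*\sigma}$ versus $c_\sigma$, and these are reconciled only through the image-stability identities.

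Finally, for regularity I would exhibit, for each $\gamma$, a cone $\delta$ with $\gamma*\delta*\gamma=\gamma$. By the third normal-category axiom let $\epsilon$ be the idempotent normal cone with vertex $c_\gamma$ and $\epsilon(c_\gamma)=1_{c_\gamma}$; a one-line computation gives $(\gamma*\epsilon)(a)=\gamma(a)(1_{c_\gamma})^\circ=\gamma(a)$, so $\epsilon$ is a right identity for $\gamma$. Choosing $c_0\in M\gamma$ and $\phi:=\gamma(c_0):c_0\to c_\gamma$, I would set $\delta(a):=\epsilon(a)\,\phi^{-1}$, the right translate of $\epsilon$ by the isomorphism $\phi^{-1}$; then $\delta$ is a normal cone with vertex $c_0$ and $\delta(c_\gamma)=\phi^{-1}$. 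Since the epimorphic part of an isomorphism is the isomorphism itself, $(\gamma*\delta)(a)=\gamma(a)\phi^{-1}$, and hence $(\gamma*\delta*\gamma)(a)=\gamma(a)\phi^{-1}(\gamma(c_0))^\circ=\gamma(a)\phi^{-1}\phi=\gamma(a)$. Thus $\gamma*\delta*\gamma=\gamma$, which shows $T\mathcal{C}$ is a regular semigroup and completes the proof.
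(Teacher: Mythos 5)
This theorem is quoted in the paper as background from Nambooripad's theory (cf.~\cite{cross}); the paper itself gives no proof of it, so there is no in-paper argument to compare yours against. Judged on its own, your proof is correct and is essentially the standard one: closure, with normality of $\gamma*\sigma$ obtained by normally factorizing the epimorphism $q=\gamma(c_0)(\sigma(c_\gamma))^\circ$ for $c_0\in M\gamma$; associativity reduced to the identity $(\sigma(c_\gamma))^\circ(\tau(c_{\gamma*\sigma}))^\circ=\bigl(\sigma(c_\gamma)(\tau(c_\sigma))^\circ\bigr)^\circ$ and settled by your two image-stability facts; and regularity via the translate $\delta(a)=\epsilon(a)\phi^{-1}$ of the idempotent cone at $c_\gamma$. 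Two points deserve to be made explicit. First, when you write the normal factorization of the epimorphism $q$ as $q=e'u'$ with no inclusion factor, you are invoking the lemma that in a normal category an inclusion which is an epimorphism is an identity (equivalently, every epimorphism factors as a retraction followed by an isomorphism); this holds because inclusions split --- by the third axiom there is a cone $\sigma$ with vertex $c$ and $\sigma(c)=1_c$, so $\sigma(d)$ is a retraction for $j(c,d)$ --- and a split mono which is epi is an isomorphism, which for inclusions forces equality of objects by strictness of the preorder. The same lemma is needed inside your stability fact $(eh)^\circ=eh^\circ$, to recognize $eh^\circ$ as the epimorphic part of a genuine normal factorization of $eh$. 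Second, the phrase ``after cancelling $\gamma(a)$'' should be replaced by the (valid) sufficiency direction: equality of the right-hand factors implies equality of the composites, which is all associativity requires; $\gamma(a)$ need not be cancellable. With these two points recorded, your argument is complete and matches the proof in \cite{cross} in structure.
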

For each $\gamma \in T\mathcal{C}$, define $H({\gamma};-)$ on the objects and morphisms of $\mathcal{C}$ as follows. For each $c\in v\mathcal{C}$ and for each $g\in \mathcal{C}(c,d)$, define
 \begin{subequations} \label{eqnH}
   \begin{align}
     H({\gamma};{c})&= \{\gamma\ast f^\circ : f \in \mathcal{C}(c_{\gamma},c)\} \\
     H({\gamma};{g}) &:\gamma\ast f^\circ \mapsto \gamma\ast (fg)^\circ
   \end{align}
 \end{subequations}

\begin{dfn}
If $\mathcal{C}$ is a normal category, then the \emph{normal dual} of $\mathcal{C}$ , denoted by $N^\ast \mathcal{C}$ , is the full subcategory of $\mathcal{C}^\ast $ with vertex set\\
\begin{equation} \label{eqnH1}
v N^\ast \mathcal{C} = \{ H(\epsilon;-) \: : \: \epsilon \in E(T\mathcal{C}) \}
\end{equation} 
where $\mathcal{C}^\ast$ is the category of all functors from $\mathcal{C}$ to $\bf{Set}$(cf. \cite{mac} ). 
\end{dfn}
\begin{thm} (cf. \cite{cross} )\label{thm1}
Let $\mathcal{C}$ be a normal category. Then $N^\ast \mathcal{C}$ is a normal category and is isomorphic to $\mathcal{R}(T\mathcal{C})$ as normal categories.
For $\gamma, \gamma' \in T\mathcal{C}$ , $H(\gamma;-) = H(\gamma';-)$ if and only if there is a unique isomorphism $h: c_{\gamma'} \to c_\gamma$, such that $\gamma = \gamma' \ast h $. And $\gamma \mathscr{R} \gamma' \: \iff \: H(\gamma;-) = H(\gamma';-)$.\\
To every morphism $\sigma : H(\epsilon;-) \to H(\epsilon ';-)$ in $N^\ast \mathcal{C} $, there is a unique $\hat{\sigma} : c_{\epsilon '} \to c_\epsilon$ in $\mathcal{C}$ such that the component of the natural transformation $\sigma$ at $c \in v\mathcal{C} $ is the map given by :
\begin{equation} \label{eqnH2}
\sigma(c) : \epsilon \ast f^\circ \mapsto \epsilon ' \ast (\hat{\sigma} f)^\circ 
\end{equation}
Also $H({\gamma};-) = H({\gamma}';-)$ implies $M\gamma = M{\gamma'}$ where $ M\gamma = \{ c \in \mathcal{C}\: :\: \gamma(c)\text{ is an isomorphism} \}$; and hence we will denote $M\gamma $ by $MH(\gamma;-)$. 
\end{thm}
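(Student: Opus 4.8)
The plan is to prove the four assertions in the order in which they build on one another: the $\mathscr{R}$-characterisation first (the computational core), from which the statement about $M\gamma$ drops out at once, then the representation of morphisms of $N^\ast\mathcal{C}$, and finally the assembly of these facts into the isomorphism with $\mathcal{R}(T\mathcal{C})$. I would begin with the easy direction of the equivalence $H(\gamma;-) = H(\gamma';-) \iff \gamma \mathscr{R}\gamma'$. Assume $\gamma = \gamma' \ast h$ for an isomorphism $h : c_{\gamma'} \to c_\gamma$. Then for each object $c$ and each $f \in \mathcal{C}(c_\gamma, c)$, associativity of $\ast$ together with the identity $(hf)^\circ = h\,f^\circ$ (valid because composing an epimorphic part with the isomorphism $h$ is again epimorphic) gives $\gamma \ast f^\circ = (\gamma' \ast h)\ast f^\circ = \gamma' \ast (hf)^\circ$; since $f \mapsto hf$ is a bijection $\mathcal{C}(c_\gamma, c) \to \mathcal{C}(c_{\gamma'}, c)$, the sets $H(\gamma; c)$ and $H(\gamma'; c)$ coincide, and agreement on morphisms is read directly off \eqref{eqnH}, so $H(\gamma;-) = H(\gamma';-)$.

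For the converse I would evaluate the functorial equality at the vertex $c_\gamma$. Since $\gamma = \gamma \ast (1_{c_\gamma})^\circ \in H(\gamma; c_\gamma) = H(\gamma'; c_\gamma)$, there is $g$ with $\gamma = \gamma' \ast g^\circ$, and matching the vertices of the two sides forces $g^\circ$ to be a morphism $c_{\gamma'} \to c_\gamma$; running the symmetric argument yields $g'^\circ : c_\gamma \to c_{\gamma'}$, and substituting one relation into the other shows that $g^\circ g'^\circ$ and $g'^\circ g^\circ$ fix $\gamma'$ and $\gamma$ respectively, whence by normality of the cones they are identities and $h := g^\circ$ is the required isomorphism. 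Uniqueness is then immediate: comparing $\gamma = \gamma' \ast h = \gamma' \ast h''$ at any $c \in M\gamma'$ gives $\gamma'(c)\,h = \gamma'(c)\,h''$ with $\gamma'(c)$ an isomorphism, so $h = h''$; and the passage to $\mathscr{R}$ follows by reading $\gamma = \gamma' \ast h$, $\gamma' = \gamma \ast h^{-1}$ as equality of the principal right ideals in $T\mathcal{C}$. The claim $M\gamma = M\gamma'$ is then a one-line consequence: from $\gamma(c) = \gamma'(c)\,h$ with $h$ an isomorphism, $\gamma(c)$ is an isomorphism exactly when $\gamma'(c)$ is, so the index sets coincide and $MH(\gamma;-)$ is well defined.

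Next I would treat the representation of an arbitrary morphism $\sigma : H(\epsilon;-) \to H(\epsilon';-)$. Evaluating the component $\sigma(c_\epsilon)$ on the distinguished element $\epsilon = \epsilon \ast (1_{c_\epsilon})^\circ$ lands in $H(\epsilon'; c_\epsilon) = \{\epsilon' \ast g^\circ : g \in \mathcal{C}(c_{\epsilon'}, c_\epsilon)\}$, so $\sigma(c_\epsilon)(\epsilon) = \epsilon' \ast \hat\sigma^\circ$ for some $\hat\sigma : c_{\epsilon'} \to c_\epsilon$. Applying the naturality square of $\sigma$ to a morphism $g : c_\epsilon \to c$ at the element $\epsilon$ then forces $\sigma(c)(\epsilon \ast g^\circ) = \epsilon' \ast (\hat\sigma g)^\circ$, which is exactly \eqref{eqnH2}; and $\hat\sigma$ is unique because any morphism inducing the same value on $\epsilon$ must agree with it after cancelling the epimorphic part.

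Finally, to obtain the first assertion I would define the comparison functor $F : N^\ast\mathcal{C} \to \mathcal{R}(T\mathcal{C})$ by $H(\epsilon;-) \mapsto \epsilon\,T\mathcal{C}$ on objects and $\sigma \mapsto$ the left translation determined by $\hat\sigma$ on morphisms. The $\mathscr{R}$-characterisation makes $F$ a bijection on objects, the morphism representation makes it fully faithful, and a routine check shows that it preserves inclusions and the subobject order; transporting the known normal-category structure of $\mathcal{R}(T\mathcal{C})$ along $F$ then shows that $N^\ast\mathcal{C}$ is a normal category isomorphic to it. I expect the main obstacle to be the converse direction of the $\mathscr{R}$-characterisation and the extraction of $\hat\sigma$: both require producing a genuine isomorphism, respectively a well-defined morphism of $\mathcal{C}$, out of a mere equality of $\mathbf{Set}$-valued functors, and this forces one to keep careful track of the epimorphic parts $f^\circ$ and to invoke the normality of the cones rather than a purely formal Yoneda argument.
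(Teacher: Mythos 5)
You should first be aware that the paper does not prove this theorem at all: it is stated with ``(cf.~\cite{cross})'' as background quoted from Nambooripad's monograph, so there is no in-paper argument to compare yours against. Judged on its own terms, your outline is sound and follows what is essentially the standard route of the cited source: the easy direction via $(hf)^\circ = h f^\circ$ and the bijection $f \mapsto hf$; the converse by evaluating the functor equality at the two vertices, matching vertices to see the representing morphisms are epimorphisms onto the vertices, and cancelling against an isomorphic component of a normal cone (this is exactly where normality enters); uniqueness of $h$ by cancellation at an object of $M\gamma'$; the Yoneda-style extraction of $\hat{\sigma}$ from the component at $c_\epsilon$ together with naturality; and the comparison functor onto $\mathcal{R}(T\mathcal{C})$, which is a normal category because $T\mathcal{C}$ is a regular semigroup.

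The one step you gloss over that genuinely needs an argument is the bridge from $\gamma = \gamma' \ast h$ to the Green's relation $\mathscr{R}$. The operation $\gamma' \ast h$ is a priori only composition of each component of $\gamma'$ with $h$; it is not, on its face, a product in the semigroup $T\mathcal{C}$, so ``reading it as equality of principal right ideals'' requires a lemma: there exists a normal cone $\sigma$ with $(\sigma(c_{\gamma'}))^\circ = h$, so that $\gamma' \cdot \sigma = \gamma' \ast h$ by the defining formula \eqref{eqnsg}. This is where axiom (3) of a normal category is used: take the idempotent cone $\epsilon$ with vertex $c_{\gamma'}$ and $\epsilon(c_{\gamma'}) = 1_{c_{\gamma'}}$, and set $\sigma = \epsilon \ast h$; then $\sigma(c_{\gamma'}) = h$ is an isomorphism, so $\sigma$ is normal and $\gamma' \cdot \sigma = \gamma' \ast h^\circ = \gamma' \ast h = \gamma$, giving $\gamma \in \gamma' T\mathcal{C}$, and symmetrically with $h^{-1}$. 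The same point is needed in the direction you do not spell out, $\gamma \mathscr{R} \gamma' \Rightarrow H(\gamma;-) = H(\gamma';-)$: here one starts from $\gamma = \gamma' \cdot \sigma = \gamma' \ast (\sigma(c_{\gamma'}))^\circ$ and $\gamma' = \gamma \cdot \tau$, and then your substitution--cancellation argument produces the mutually inverse isomorphisms, after which the easy direction applies. With that lemma inserted, your proof is complete in outline, and it also silently underlies your final step, since the element of $\epsilon' T\mathcal{C}\epsilon$ implementing the left translation $F(\sigma)$ is precisely the cone $\epsilon' \ast \hat{\sigma}^\circ$ obtained this way.
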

\begin{dfn}\label{ideal}
Let $\mathcal{C}$ be a small category with subobjects. Then an \emph{ideal} $\langle c \rangle$ of $\mathcal{C}$ is the full subcategory of $\mathcal{C}$ whose objects are subobjects of $c$ in $\mathcal{C}$. It is called the principal ideal generated by $c$.
\end{dfn}
\begin{dfn} \label{lociso}
Let $\mathcal{C}$ and $\mathcal{D}$ be normal categories. Then a functor $F: \mathcal{C} \to \mathcal{D}$ is said to be a \emph{local isomorphism} if $F$ is inclusion preserving , fully faithful and for each $c \in v\mathcal{C}$, $F_{|\langle c \rangle}$ is an isomorphism of the ideal $\langle c \rangle$ onto $\langle F(c) \rangle$.
\end{dfn}
\begin{dfn} \label{cxn}
Let $\mathcal{C}$ and $\mathcal{D}$ be normal categories. A \emph{cross-connection} is a triplet $(\mathcal{D},\mathcal{C};{\Gamma})$ where $\Gamma: \mathcal{D} \to N^\ast\mathcal{C}$ is a local isomorphism such that for every $c \in v\mathcal{C}$, there is some $d \in v\mathcal{D}$ such that $c \in M\Gamma(d)$.
\end{dfn}
\begin{rmk}\label{dualcxn}
Given a cross-conection $\Gamma$ from $\mathcal{D}$ to $\mathcal{C}$, there is a \emph{dual cross-connection} $\Delta$ from $\mathcal{C} $ to $\mathcal{D}$ denoted by ($\mathcal{C}$,$\mathcal{D}$;$\Delta$).
\end{rmk}
\begin{rmk}\label{bif}
Given a cross-connection $\Gamma: \mathcal{D} \to N^\ast\mathcal{C}$, we get a unique bifunctor $\Gamma(-,-) : \mathcal{C}\times\mathcal{D} \to \bf{Set}$ defined by $ \Gamma(c,d) = \Gamma(d)(c) $ and $ \Gamma(f,g) = (\Gamma(d)(f))(\Gamma(g)(c')) = (\Gamma(g)(c))(\Gamma(d')(f))$ for all $(c,d) \in v\mathcal{C}\times v\mathcal{D}$ and $(f,g):(c,d) \to (c',d')$. Similarly corresponding to $\Delta: \mathcal{C} \to N^\ast\mathcal{D}$, we have $\Delta(-,-) : \mathcal{C}\times\mathcal{D} \to \bf{Set}$ defined by $\Delta(c,d) = \Delta(c)(d)$ and $\Delta(f,g) = (\Delta(c)(g))(\Delta(f)(d')) = (\Delta(f)(d))(\Delta(c')(g)) $ for all $(c,d) \in v\mathcal{C}\times v\mathcal{D}$ and $(f,g):(c,d) \to (c',d')$.
\end{rmk}
\begin{thm} (cf. \cite{cross})\label{duality}
Let $(\mathcal{D},\mathcal{C};\Gamma)$ be a cross-conection and let $(\mathcal{C},\mathcal{D};\Delta)$ be the dual cross-connection. Then there is a natural isomorphism $\chi_\Gamma$ between $\Gamma(-,-)$ and $\Delta(-,-)$ 
 such that $\chi_\Gamma : \Gamma(c,d) \to \Delta(c,d)$ is a bijection. 
\end{thm}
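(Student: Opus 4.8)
The plan is to construct, for each pair $(c,d) \in v\mathcal{C} \times v\mathcal{D}$, an explicit bijection $\chi_\Gamma(c,d) : \Gamma(c,d) \to \Delta(c,d)$ and then to check that these assemble into a natural transformation of the two bifunctors $\mathcal{C}\times\mathcal{D} \to \mathbf{Set}$. First I would unpack both sides using Theorem \ref{thm1} to realise $N^\ast\mathcal{C} \cong \mathcal{R}(T\mathcal{C})$: writing $\Gamma(d) = H(\epsilon_d;-)$ for a suitable $\epsilon_d \in E(T\mathcal{C})$, Remark \ref{bif} together with \eqref{eqnH} gives
$$\Gamma(c,d) = \Gamma(d)(c) = H(\epsilon_d;c) = \{\epsilon_d \ast f^\circ : f \in \mathcal{C}(c_{\epsilon_d},c)\}.$$
Dually, writing $\Delta(c) = H(\epsilon'_c;-)$ with $\epsilon'_c \in E(T\mathcal{D})$, we get $\Delta(c,d) = \Delta(c)(d) = H(\epsilon'_c;d)$. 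Thus both values are explicit sets of normal cones, in $\mathcal{C}$ and in $\mathcal{D}$ respectively, and the problem becomes one of matching them compatibly in $(c,d)$.

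Since the dual cross-connection $\Delta$ of Remark \ref{dualcxn} is built from $\Gamma$ precisely so that the two bifunctors carry the same information, I expect the componentwise map $\chi_\Gamma(c,d)$ to be essentially a relabelling: it should send a cone $\epsilon_d \ast f^\circ \in \Gamma(c,d)$ to the cone in $\Delta(c,d)$ determined by the transpose of $f$ under the correspondence $\sigma \leftrightarrow \hat\sigma$ of Theorem \ref{thm1}. Bijectivity of each component should then fall out of the bijectivity of that transpose correspondence, with the cross-connection condition $c \in M\Gamma(d)$ from Definition \ref{cxn} ensuring the relevant components are isomorphisms where needed; consequently the real content of the theorem is not bijectivity but \emph{naturality}.

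Accordingly, the substance of the proof is to verify, for each morphism $(f,g):(c,d)\to(c',d')$, that
$$\Delta(f,g) \circ \chi_\Gamma(c,d) = \chi_\Gamma(c',d') \circ \Gamma(f,g).$$
Here I would exploit the two equivalent factorisations of each bifunctor action recorded in Remark \ref{bif}. On the $\Gamma$ side the $\mathcal{C}$-variable $f$ acts \emph{directly}, through the functor action $\Gamma(d)(f):\epsilon_d\ast h^\circ \mapsto \epsilon_d\ast(hf)^\circ$, while the $\mathcal{D}$-variable $g$ acts through the component $\Gamma(g)(c)$, i.e. by the transpose $\widehat{\Gamma(g)}$ via \eqref{eqnH2}; on the $\Delta$ side these roles are reversed. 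I would therefore reduce to the two generating cases, morphisms of the form $(f,1_d)$ and those of the form $(1_c,g)$, prove naturality in each variable separately using whichever factorisation is convenient, and recover the general square by composition.

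I expect the main obstacle to be exactly this reconciliation of the \emph{direct} and \emph{transposed} descriptions: matching the direct functorial action of $f$ on the $\Gamma$ side against the action of $f$ on the $\Delta$ side through the transpose $\widehat{\Delta(f)}$, and symmetrically for $g$. The naturality square commutes only because the construction of the dual cross-connection forces $\widehat{\Delta(f)}$ to be the transpose of $\Gamma(d)(f)$ under the identification of Theorem \ref{thm1}, so the delicate point is to verify this compatibility between a normal-dual object's functorial action and the component maps of a natural transformation in $N^\ast\mathcal{C}$. Once it is established, every component $\chi_\Gamma(c,d)$ is a bijection and $\chi_\Gamma$ is the required natural isomorphism.
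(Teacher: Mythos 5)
A preliminary but important observation: the paper never proves Theorem \ref{duality}. It is quoted from Nambooripad's monograph (cf. \cite{cross}), and Remark \ref{dualcxn} likewise only \emph{asserts} the existence of the dual cross-connection $(\mathcal{C},\mathcal{D};\Delta)$ without constructing it. So your proposal must stand on its own, and judged that way it has a genuine gap, concentrated exactly where you flag "the delicate point." Your componentwise map is not actually defined. The correspondence $\sigma \leftrightarrow \hat\sigma$ of Theorem \ref{thm1} turns a morphism of $N^\ast\mathcal{C}$ into a morphism of $\mathcal{C}$; applied in reverse to $f \in \mathcal{C}(c_{\epsilon_d},c)$ it produces a morphism of $N^\ast\mathcal{C}$ with codomain $\Gamma(d)$ and domain some $\Gamma(d')$ with $c \in M\Gamma(d')$ --- not a normal cone in $\mathcal{D}$. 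To land in $\Delta(c,d) \subseteq T\mathcal{D}$ you must in addition (i) use full faithfulness of $\Gamma$ to pull this morphism back to a unique $g \in \mathcal{D}(d',d)$, (ii) use a representation of $\Delta(c)$ as $H(\delta_c;-)$ for an idempotent cone $\delta_c \in E(T\mathcal{D})$ with vertex $d'$ to form $\delta_c \ast g^\circ$, and (iii) check independence of the choice of $d'$. None of (i)--(iii) appears in your sketch, and (ii) and (iii) cannot even be formulated without the concrete construction of $\Delta$, which neither you nor the paper supplies.

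This is why the central compatibility --- that $\widehat{\Delta(f)}$ is "forced" to be the transpose of $\Gamma(d)(f)$ --- cannot be taken as given: it is not a design feature one may cite, it is the substance of the theorem. In \cite{cross} one first \emph{builds} $\Delta$ pointwise through $\Gamma$ (the component of the cone $\delta(c,d)$ at each $d''$ is the unique $\mathcal{D}$-morphism whose $\Gamma$-image has a prescribed transpose), then verifies that this is a normal cone, that $H(\delta(c,d);-)$ is independent of the choices made, and that $c \mapsto \Delta(c)$ is a local isomorphism satisfying Definition \ref{cxn}; only after that does the naturality computation you outline become a finite check. Your outline defers precisely these steps, so it reduces to: "once the defining property of $\Delta$ holds, the square commutes" --- a restatement of the theorem rather than a proof. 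The surrounding skeleton is sound and does match the structure of Nambooripad's argument: representing $\Gamma(c,d)$ as $H(\epsilon_d;c)$ via equations (\ref{eqnH}), noting that $f \mapsto \epsilon_d \ast f^\circ$ is injective (evaluate at the vertex $c_{\epsilon_d}$), and reducing naturality to morphisms of the form $(f,1_d)$ and $(1_c,g)$ using the two factorizations in Remark \ref{bif}. But the hard content --- the construction of $\Delta$ and the verification of the transpose compatibility --- is missing, and it is the whole theorem.
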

\begin{dfn}\label{ugamma}
Given a cross-connection $\Gamma$ of $\mathcal{D}$ with $\mathcal{C}$. Define
\begin{subequations}
\begin{align}
U\Gamma = & \bigcup\: \{ \quad \Gamma(c,d) : (c,d) \in v\mathcal{C} \times v\mathcal{D} \} \\
U\Delta = & \bigcup\: \{ \quad \Delta(c,d) : (c,d) \in v\mathcal{C} \times v\mathcal{D} \}
\end{align}
\end{subequations}
\end{dfn}
\begin{thm}(cf. \cite{cross}) \label{thmug}
For any cross-connection $\Gamma : \mathcal{D} \to N^\ast\mathcal{C}$, $U\Gamma$ is a regular subsemigroup of $T\mathcal{C}$ such that $\mathcal{C}$ is isomorphic to $\mathcal{L}(U\Gamma)$. And $U\Delta$ is a regular subsemigroup of $T\mathcal{D}$ such that $\mathcal{D}$ is isomorphic to $\mathcal{L}(U\Delta)$.
\end{thm}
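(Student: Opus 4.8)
The plan is to first make $U\Gamma$ concrete as a set of normal cones, then establish, in order, (i) closure under the product $\ast$, (ii) regularity, and (iii) the category isomorphism $\mathcal{C}\cong\mathcal{L}(U\Gamma)$; the dual assertions for $U\Delta$ and $\mathcal{D}$ will then follow verbatim by replacing $(\mathcal{D},\mathcal{C};\Gamma)$ with the dual cross-connection $(\mathcal{C},\mathcal{D};\Delta)$ of Remark \ref{dualcxn}. Since $\Gamma(d)\in vN^\ast\mathcal{C}$, by (\ref{eqnH1}) there is an idempotent normal cone $\epsilon_d\in E(T\mathcal{C})$ with $\Gamma(d)=H(\epsilon_d;-)$; hence by Definition \ref{ugamma} and (\ref{eqnH}),
$$\Gamma(c,d)=\Gamma(d)(c)=H(\epsilon_d;c)=\{\epsilon_d\ast f^\circ : f\in\mathcal{C}(c_{\epsilon_d},c)\},$$
so that every element of $U\Gamma$ is a normal cone and $U\Gamma\subseteq T\mathcal{C}$.

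For closure I would show that, for fixed $d$, the union $\bigcup_{c\in v\mathcal{C}}\Gamma(c,d)$ coincides with the principal right ideal $\epsilon_d\ast T\mathcal{C}$ of $T\mathcal{C}$: comparing the product formula (\ref{eqnsg}) with the action in (\ref{eqnH}) shows that $\epsilon_d\ast\sigma$ and $\epsilon_d\ast f^\circ$ agree whenever $f=\sigma(c_{\epsilon_d})$, and conversely that every $\epsilon_d\ast f^\circ$ arises from a suitable normal cone $\sigma$. Granting this, $U\Gamma=\bigcup_{d\in v\mathcal{D}}\epsilon_d\ast T\mathcal{C}$ is a union of right ideals, so for $\gamma\in\Gamma(c,d)$ and any $\delta\in U\Gamma$ we have $\gamma\ast\delta\in\epsilon_d\ast T\mathcal{C}\subseteq U\Gamma$ by associativity of $\ast$; thus $U\Gamma$ is a subsemigroup of $T\mathcal{C}$. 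Note each generating idempotent lies in $U\Gamma$, since $\epsilon_d=\epsilon_d\ast 1_{c_{\epsilon_d}}^\circ\in\Gamma(c_{\epsilon_d},d)$.

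The substantive point is regularity, and here the duality is essential. For $\gamma\in\Gamma(c,d)\subseteq T\mathcal{C}$, Theorem \ref{duality} supplies its image $\chi_\Gamma(\gamma)\in\Delta(c,d)\subseteq T\mathcal{D}$; the naturality of $\chi_\Gamma$ in both variables (Remark \ref{bif}) links $\gamma$ to a cone of $U\Delta$, and I would read off from this linked pair an element $\gamma'\in U\Gamma$ with $\gamma\ast\gamma'\ast\gamma=\gamma$. Concretely, one uses the linking bijection to transport a quasi-inverse of $\gamma$ in $T\mathcal{C}$ back into the union of right ideals $U\Gamma$, the cross-connection condition of Definition \ref{cxn} — that every $c\in v\mathcal{C}$ lies in some $M\Gamma(d)$ — guaranteeing that the idempotent cone with the correct vertex is actually available inside $U\Gamma$. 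This step, verifying that the quasi-inverse can be chosen within $U\Gamma$ rather than merely in $T\mathcal{C}$, is where I expect the main obstacle, since it is exactly here that the full force of $\Gamma$ being a local isomorphism (and not just the object assignment $d\mapsto\epsilon_d$) is needed.

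Finally, for $\mathcal{C}\cong\mathcal{L}(U\Gamma)$ I would construct a functor $F:\mathcal{C}\to\mathcal{L}(U\Gamma)$ sending an object $c$ to the principal left ideal generated by an idempotent cone of $U\Gamma$ with vertex $c$ (available by the covering condition), and a morphism $g\in\mathcal{C}(c,c')$ to the partial right translation induced by the action $H(-;g)$ of (\ref{eqnH}). The assignment is a bijection on objects because $\mathscr{L}$-equivalent cones of $U\Gamma$ share a vertex and, by the covering condition, every object of $\mathcal{C}$ occurs as the vertex of such a cone; and $F$ is inclusion-preserving, full and faithful — hence an isomorphism of normal categories — because $\Gamma$ is a local isomorphism in the sense of Definition \ref{lociso}, which transports the hom-set bijections on each principal ideal $\langle c\rangle$ to the corresponding hom-sets of $\mathcal{L}(U\Gamma)$. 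Applying the same argument to the dual cross-connection $(\mathcal{C},\mathcal{D};\Delta)$ yields that $U\Delta$ is a regular subsemigroup of $T\mathcal{D}$ with $\mathcal{D}\cong\mathcal{L}(U\Delta)$, completing the proof.
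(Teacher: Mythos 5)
First, a point of comparison: the paper contains no proof of this statement at all — Theorem \ref{thmug} is quoted as background, with the tag (cf. \cite{cross}), from Nambooripad's monograph — so your proposal has to stand on its own merits. Its first half does: since $\Gamma(d)=H(\epsilon_d;-)$ for an idempotent cone $\epsilon_d$, every element of $\Gamma(c,d)=H(\epsilon_d;c)$ is a normal cone, and $\bigcup_{c}\Gamma(c,d)$ is exactly the principal right ideal $\epsilon_d\ast T\mathcal{C}$ (if $\sigma=\epsilon_d\ast f^\circ$ then $\sigma(c_{\epsilon_d})=f^\circ$, so $\epsilon_d\ast\sigma=\epsilon_d\ast f^\circ$, and conversely every $\epsilon_d\ast\sigma$ has this form); hence $U\Gamma$ is a union of right ideals of $T\mathcal{C}$, closed under $\ast$, and contains each $\epsilon_d$.

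The genuine gap is regularity, which you explicitly leave open (``where I expect the main obstacle''), and the mechanism you propose for closing it cannot work as described: the linking bijection $\chi_\Gamma(c,d)$ of Theorem \ref{duality} maps $\Gamma(c,d)$ into $\Delta(c,d)\subseteq T\mathcal{D}$, a \emph{different} semigroup, so it cannot ``transport a quasi-inverse of $\gamma$ back into $U\Gamma$''; duality and linked pairs play no role here. What does the work is precisely the pair of facts you half-identify but never combine: normality of $\gamma$ and the covering condition of Definition \ref{cxn}. Concretely, let $\gamma\in\Gamma(c,d)$. Choose $c''\in M\gamma$, so that $\gamma(c'')\colon c''\to c_\gamma$ is an isomorphism, and choose $d'$ with $c_\gamma\in M\Gamma(d')$, so that $u=\epsilon_{d'}(c_\gamma)\colon c_\gamma\to c_{\epsilon_{d'}}$ is an isomorphism. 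Put $\gamma'=\epsilon_{d'}\ast\bigl(u^{-1}(\gamma(c''))^{-1}\bigr)$; this lies in $H(\epsilon_{d'};c'')\subseteq U\Gamma$ since $u^{-1}(\gamma(c''))^{-1}\in\mathcal{C}(c_{\epsilon_{d'}},c'')$ is an isomorphism, hence its own epimorphic part. Then $\gamma'(c_\gamma)=u\,u^{-1}(\gamma(c''))^{-1}=(\gamma(c''))^{-1}$, so by formula \ref{eqnsg}, $(\gamma\ast\gamma'\ast\gamma)(a)=\gamma(a)\,(\gamma(c''))^{-1}\,\gamma(c'')=\gamma(a)$ for every $a$, i.e.\ $\gamma\ast\gamma'\ast\gamma=\gamma$, and $U\Gamma$ is regular. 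A smaller misdirection of the same kind occurs in your last step: fullness of $F\colon\mathcal{C}\to\mathcal{L}(U\Gamma)$ does not come from $\Gamma$ being a local isomorphism (Definition \ref{lociso} concerns hom-sets of $\mathcal{D}$ and $N^\ast\mathcal{C}$, and says nothing directly about $\mathcal{L}(U\Gamma)$); it comes once more from the right-ideal structure, namely that $\epsilon\ast U\Gamma\ast\epsilon'=\epsilon\ast T\mathcal{C}\ast\epsilon'$ for idempotents $\epsilon,\epsilon'\in U\Gamma$ (because $\epsilon\ast T\mathcal{C}\subseteq\epsilon_d\ast T\mathcal{C}\subseteq U\Gamma$), combined with the standard bijection between $\epsilon\ast T\mathcal{C}\ast\epsilon'$ and $\mathcal{C}(c_\epsilon,c_{\epsilon'})$ valid in any semigroup of normal cones.
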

\begin{dfn}
For a cross-connection $\Gamma : \mathcal{D} \to N^\ast\mathcal{C}$, we shall say that $\gamma \in U\Gamma$ is \emph{linked} to $\delta \in U\Delta$ if there is a $(c,d) \in v\mathcal{C} \times v\mathcal{D}$ such that $\gamma \in \Gamma(c,d)$ and $ \delta = \chi_\Gamma(c,d)(\gamma)$.
\end{dfn}
\begin{thm}(cf. \cite{cross})\label{thmcxs}
Let $\Gamma : \mathcal{D} \to N^\ast\mathcal{C}$ be a cross-connection and let 
$$ \tilde{S}\Gamma = \:\{\: (\gamma,\delta) \in U\Gamma\times U\Delta : (\gamma,\delta) \text{ is linked }\:\} $$ 
Then $\tilde{S}\Gamma$ is a regular semigroup with the binary operation defined by 
$$ (\gamma , \delta) \circ ( \gamma' , \delta') = (\gamma . \gamma' , \delta' . \delta)    $$
for all $(\gamma,\delta),( \gamma' , \delta') \in \tilde{S}\Gamma$. Then $\tilde{S}\Gamma$ is a sub-direct product of $U\Gamma$ and $U\Delta^{\text{op}}$ and is called the \emph{cross-connection semigroup} determined by $\Gamma$.
\end{thm}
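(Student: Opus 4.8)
The plan is to realise $\tilde{S}\Gamma$ as a regular subsemigroup of the direct product $U\Gamma \times U\Delta^{\text{op}}$ and then read the subdirect-product claim off the two coordinate projections. First I would record that, by Theorem \ref{thmug}, $U\Gamma$ is a regular subsemigroup of $T\mathcal{C}$ and $U\Delta$ is a regular subsemigroup of $T\mathcal{D}$; since the opposite of a regular semigroup is regular and a direct product of regular semigroups is regular, $U\Gamma \times U\Delta^{\text{op}}$ is itself regular. The key book-keeping observation is that the rule $(\gamma,\delta)\circ(\gamma',\delta') = (\gamma\gamma',\delta'\delta)$ is exactly the restriction of the direct-product multiplication of $U\Gamma \times U\Delta^{\text{op}}$, because multiplication in $U\Delta^{\text{op}}$ sends $(\delta,\delta')$ to $\delta'\delta$. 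Hence associativity of $\circ$ is automatic once closure is known, and the whole proof reduces to three points: closure of $\tilde{S}\Gamma$ under $\circ$, regularity, and surjectivity of the two projections.

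Closure is the heart of the matter and I expect it to be the main obstacle. Suppose $(\gamma,\delta)$ is linked via $(c,d)$ and $(\gamma',\delta')$ is linked via $(c',d')$, so that $\gamma\in\Gamma(c,d)$, $\delta=\chi_\Gamma(c,d)(\gamma)$, and similarly for the primed pair. Since $U\Gamma$ and $U\Delta$ are already subsemigroups of $T\mathcal{C}$ and $T\mathcal{D}$ by Theorem \ref{thmug}, the products $\gamma\gamma'\in U\Gamma$ and $\delta'\delta\in U\Delta$ exist, and there is a unique $(c'',d'')\in v\mathcal{C}\times v\mathcal{D}$ with $\gamma\gamma'\in\Gamma(c'',d'')$. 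Closure therefore reduces to the single identity
\[ \chi_\Gamma(c'',d'')(\gamma\gamma') = \delta'\delta . \]
To prove it I would express both products through the bifunctor actions of Remark \ref{bif}: the $T\mathcal{C}$-product $\gamma\gamma'$ is governed by the action $H(\gamma;g)$ of \eqref{eqnH} together with the correspondence $\sigma\mapsto\hat{\sigma}$ of Theorem \ref{thm1}, which realise multiplication in $U\Gamma$ as the effect of a morphism of $\Gamma(-,-)$, and dually $\delta'\delta$ as the effect of the corresponding morphism of $\Delta(-,-)$. The desired identity is then precisely the commuting naturality square of the natural isomorphism $\chi_\Gamma$ of Theorem \ref{duality}, evaluated at the morphism of $\mathcal{C}\times\mathcal{D}$ induced by the normal factorizations of $\gamma$ and $\gamma'$. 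This is where the duality $\Gamma(-,-)\cong\Delta(-,-)$ does the real work; the step is delicate exactly because the two coordinates multiply in opposite orders, and reconciling them is what naturality of $\chi_\Gamma$ encodes.

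For regularity, given $(\gamma,\delta)$ I would produce a linked pair $(\gamma^\ast,\delta^\ast)$ in which $\gamma^\ast$ is an inverse of $\gamma$ in $U\Gamma$ and $\delta^\ast$ is an inverse of $\delta$ in $U\Delta$; the existence of such a mutually linked, compatible choice is the genuine content here and rests once more on the bijection $\chi_\Gamma(\,\cdot\,,\cdot\,)$ of Theorem \ref{duality}, which forces the linked partner of a suitably chosen inverse of $\gamma$ to be an inverse of $\delta$. Granting this, the closure identity applied to $(\gamma,\delta)$ and $(\gamma^\ast,\delta^\ast)$ shows that $(\gamma,\delta)\circ(\gamma^\ast,\delta^\ast)\circ(\gamma,\delta)$ computes coordinatewise to $(\gamma\gamma^\ast\gamma,\;\delta\delta^\ast\delta)=(\gamma,\delta)$, so $\tilde{S}\Gamma$ is regular. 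Finally the subdirect-product statement follows from Definition \ref{ugamma}: every $\gamma\in U\Gamma$ lies in some $\Gamma(c,d)$, whence $\delta=\chi_\Gamma(c,d)(\gamma)$ gives a linked pair and the first projection $\tilde{S}\Gamma\to U\Gamma$ is onto; and since $\chi_\Gamma(c,d)$ is a bijection onto $\Delta(c,d)$, the symmetric argument makes the projection onto $U\Delta^{\text{op}}$ onto as well. Thus $\tilde{S}\Gamma$ is a regular semigroup and a subdirect product of $U\Gamma$ and $U\Delta^{\text{op}}$, as asserted.
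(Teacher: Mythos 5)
A preliminary remark: this theorem is one the paper imports from Nambooripad's monograph (cf.\ \cite{cross}) and states without proof, so there is no in-paper argument to compare yours against; I can only judge the proposal on its own terms. Its skeleton is correct and is the standard one: $\circ$ is exactly the restriction of the multiplication of $U\Gamma\times U\Delta^{\text{op}}$, so associativity is automatic once closure is known; both coordinate projections are surjective for precisely the reason you give (every $\gamma\in U\Gamma$ lies in some $\Gamma(c,d)$ by Definition \ref{ugamma}, and $\chi_\Gamma(c,d)$ is a bijection onto $\Delta(c,d)$ by Theorem \ref{duality}); and closure together with linked inverses yields regularity by your coordinatewise computation. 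The reduction of closure to the single identity $\chi_\Gamma(c'',d'')(\gamma\gamma')=\delta'\delta$ is also the right move, except that your ``unique $(c'',d'')$'' is false: a cone can lie in $\Gamma(c,d)$ for many $(c,d)$ (in this paper's own setting $\Gamma(\bar{\lambda},\bar{i_1})=\Gamma(\bar{\lambda},\bar{i_2})$ whenever the sandwich matrix has equal columns $p_{i_1}=p_{i_2}$, e.g.\ the rectangular band of Corollary \ref{RBG}); linkage fortunately requires only \emph{some} such pair.

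The genuine gaps are the two steps that carry all the content. (i) Closure: asserting that the identity ``is precisely the commuting naturality square of $\chi_\Gamma$ evaluated at the morphism of $\mathcal{C}\times\mathcal{D}$ induced by the normal factorizations of $\gamma$ and $\gamma'$'' is not a proof, because no such morphism is ever exhibited. What is actually needed is a pair $(f,g):(c,d)\to(c'',d'')$ for which one can verify, first, that right multiplication by $\gamma'$ in $T\mathcal{C}$ coincides with the bifunctor action, i.e.\ $\gamma\gamma'=\Gamma(f,g)(\gamma)$ --- this requires unwinding the product \eqref{eqnsg} through the $H$-functors \eqref{eqnH} and the correspondence $\sigma\mapsto\hat{\sigma}$ of Theorem \ref{thm1} --- and, second, that $\Delta(f,g)(\delta)=\delta'\delta$, i.e.\ that the same pair acts on the $\mathcal{D}$-side as left multiplication by $\delta'$; only then does naturality of $\chi_\Gamma$ close the square. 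The natural candidate for $f$ comes from the component $\gamma'(c_\gamma)$, and $g$ must be its transpose across the cross-connection; producing $g$ and proving both facts is the substance of Nambooripad's proof and is entirely missing here. (ii) Regularity: the existence of an inverse $\gamma^{\ast}$ of $\gamma$ in $U\Gamma$ whose linked partner under $\chi_\Gamma$ is an inverse of $\delta$ in $U\Delta$ is exactly the nontrivial claim; bijectivity of $\chi_\Gamma(c,d)$ is a statement about sets and does not by itself relate inverses in $U\Gamma$ to inverses in $U\Delta$. As it stands, you have a correct strategy and an accurate map of where the difficulty lies, but the two hard steps remain unproved.
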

Let $S$ be a regular semigroup. The category of principal left ideals of $S$ is described as follows.
Since every principal left ideal in
$S$ has at least one idempotent generator, we may write objects
(vertexes) in $\mathcal{L}(S)$ as $Se$ for $e\in E(S)$. Morphisms
$\rho:Se\to Sf$ are right translations $\rho=\rho(e,s,f)$ where $s \in eSf$ and $\rho$ maps $x \mapsto xs$. Thus
\begin{equation} \label{eqnLS}
  v\mathcal{L}(S) = \{ Se : e \in E(S)\}\quad\text{and}  \quad 
  \mathcal{L}(S) =\{\rho(e,s,f) : e,f \in E(S),\; s\in eSf\}.    
\end{equation}
\begin{pro} (cf. \cite{cross})\label{pro1} Let $S$ be a regular semigroup. Then $\mathcal{L}(S)$ is a normal category. $\rho(e,u,f)=\rho(e',v,f')$ if and only if $e \mathscr{L} e'$, $f\mathscr{L} f'$, $u \in eSf$, $v\in e'Sf'$ and $v=e'u$. Let $\rho=\rho(e,u,f)$ be a morphism in $\mathcal{L}(S)$. For any $g\in
    R_{u} \cap \omega(e)$ and $h\in E(L_{u})$, 
    $$ \rho=\rho(e,g,g)\rho(g,u,h)\rho(h,h,f) $$
     is a normal factorization of $\rho$.
\end{pro}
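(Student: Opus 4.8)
The plan is to verify the three defining requirements of a normal category in turn, treating the equality criterion and the normal factorization as the technical core, and to use the already-stated description \eqref{eqnLS} of objects and morphisms throughout.

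First I would fix the subobject structure. The objects are the distinct principal left ideals $Se$, and I would take as inclusions the morphisms $j(Se,Sf)=\rho(e,e,f)$, available precisely when $Se\subseteq Sf$, equivalently when $ef=e$. Since such a set-inclusion map $x\mapsto x$ is the unique one between two ideals and is injective, the inclusions form a strict preorder of monomorphisms; the remaining axiom, that $f=hg$ with $f,g$ inclusions forces $h$ to be an inclusion, follows by direct cancellation using that inclusions act as the identity on their domains. Identities are $\rho(e,e,e)$ and composition is $\rho(e,s,f)\rho(f,t,k)=\rho(e,st,k)$ with $st\in eSk$, so $\mathcal{L}(S)$ is a category with subobjects.

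Next, for the equality criterion I would compare the two right translations directly. If $\rho(e,u,f)=\rho(e',v,f')$ then domains and codomains coincide, giving $Se=Se'$ and $Sf=Sf'$, i.e. $e\mathscr{L}e'$ and $f\mathscr{L}f'$; evaluating the common map at $e'\in Se$ and using $e'v=v$ (since $v\in e'Sf'$) yields $v=e'u$. Conversely, given the five conditions, for any $x\in Se=Se'$ one has $x=xe'$, whence $xv=xe'u=xu$, so the two translations agree as maps and hence as morphisms.

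The core is the normal factorization. I would first record the two facts that drive it: $g\in R_u$ idempotent gives $gu=u$, and $h\in E(L_u)$ gives $uh=u$, so that $u=guh\in gSh$ and the composite $\rho(e,g,g)\rho(g,u,h)\rho(h,h,f)$ sends $x\mapsto xguh=xu$ on $Se$, hence equals $\rho(e,u,f)$ by the equality criterion just proved. It then remains to identify the three factors: since $g\in\omega(e)$ we have $Sg\subseteq Se$ and $j(Sg,Se)\rho(e,g,g)=1_{Sg}$, so $\rho(e,g,g)$ is a retraction; since $h\mathscr{L}u$ and $u\in eSf$ forces $h\in Sf$, we get $Sh\subseteq Sf$ and $\rho(h,h,f)=j(Sh,Sf)$ is an inclusion; and choosing the inverse $u^{*}$ of $u$ with $uu^{*}=g$ and $u^{*}u=h$, the morphism $\rho(h,u^{*},g)$ is a two-sided inverse of $\rho(g,u,h)$, so the middle factor is an isomorphism. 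Finally, for the third axiom I would exhibit, for each $Se$, the principal cone $\rho^{e}$ given by $\rho^{e}(Sf)=\rho(f,fe,e)$; one checks $fe\in fSe$, that $\rho^{e}$ respects inclusions, and that $\rho^{e}(Se)=1_{Se}$ is an isomorphism, so $\rho^{e}$ is a normal cone of the required form. I expect the main obstacle to be the isomorphism step of the factorization, namely locating the correct inverse $u^{*}$ realizing the prescribed idempotents $uu^{*}=g$ and $u^{*}u=h$, together with the bookkeeping needed to keep every construction well defined modulo the choice of idempotent generator of each $Se$, i.e. modulo $\mathscr{L}$.
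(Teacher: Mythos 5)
Your proof is correct, but there is nothing in the paper to compare it against: Proposition \ref{pro1} is quoted as a known preliminary from Nambooripad's \emph{Theory of Cross-connections} \cite{cross}, and the paper gives no proof of it. Your argument is essentially the standard one from that source — inclusions $\rho(e,e,f)$ available exactly when $ef=e$, equality of right translations checked by evaluating at an idempotent generator, the retraction--isomorphism--inclusion identification of the three factors, and the principal cone $\rho^{e}$ witnessing the third axiom — and every step you spell out is sound.

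Two facts that you invoke without proof deserve a line each to make the verification self-contained. First, to conclude that \emph{every} morphism $\rho(e,u,f)$ admits a normal factorization you need $R_u\cap\omega(e)\neq\emptyset$ (that $E(L_u)\neq\emptyset$ is immediate from regularity): for any inverse $u'$ of $u$, the element $g=uu'e$ is idempotent (since $eu=u$ gives $e(uu')=uu'$), lies in $\omega(e)$, and satisfies $g\,\mathscr{R}\,uu'\,\mathscr{R}\,u$. Second, the inverse $u^{*}$ of $u$ with $uu^{*}=g$ and $u^{*}u=h$, which you flag as the main obstacle, exists for any choice of $g\in E(R_u)$ and $h\in E(L_u)$: take $u^{*}=hu'g$; then $uu^{*}=uu'g=g$ and $u^{*}u=hu'u=h$, because $g$ and $uu'$ are $\mathscr{R}$-equivalent idempotents (hence $uu'g=g$) and $h$ and $u'u$ are $\mathscr{L}$-equivalent idempotents (hence $hu'u=h$), and one checks directly that $uu^{*}u=u$ and $u^{*}uu^{*}=u^{*}$. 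With these two standard lemmas supplied, your proof is complete.
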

\begin{pro}(cf. \cite{cross} ) \label{pro2}
Let $S$ be a regular semigroup, $a \in S$ and $f \in E(L_a) $. Then for each $e \in E(S) $, let $\rho^a(Se) = \rho(e,ea,f) $. Then $\rho^a$ is a normal cone in $\mathcal{L}(S)$ with vertex $Sa$ called the principal cone generated by $a$. $M_{\rho^a} = \{ Se : e \in E(R_a) \}$. $\rho^a$ is an idempotent in $T\mathcal{L}(S)$ iff $a \in E(S)$.
The mapping $a \mapsto \rho^a$ is a homomorphism from S to $T\mathcal{L}(S)$. Further if S is a monoid, then S is isomorphic to $T\mathcal{L}(S)$.
\end{pro}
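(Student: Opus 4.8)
The plan is to verify the five assertions in turn, throughout using Proposition~\ref{pro1} to recognise morphisms and isomorphisms in $\mathcal{L}(S)$ and the product formula \eqref{eqnsg} in $T\mathcal{L}(S)$. First I would check that $\rho^a$ is a well-defined cone. Since $f\in E(L_a)$ we have $Sf=Sa$ and $af=a$, so $ea=eaf\in eSf$ and each $\rho(e,ea,f)$ is a genuine morphism $Se\to Sf=Sa$. Whenever $Se'\subseteq Se$ (so that $e'e=e'$) the inclusion is $j(Se',Se)=\rho(e',e',e)$, and composing right translations gives $j(Se',Se)\,\rho^a(Se)=\rho(e',e'ea,f)=\rho(e',e'a,f)=\rho^a(Se')$, which is exactly the cone condition of Definition~\ref{dfn1}. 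To see that $\rho^a$ is normal with vertex $Sa$, pick any $e\in E(R_a)$: then $ea=a$, and since $a\mathscr{R}e$, $a\mathscr{L}f$, Proposition~\ref{pro1} shows $\rho(e,a,f)$ is an isomorphism $Se\to Sa$. Using the equality criterion of Proposition~\ref{pro1} one also checks that $\rho^a$ is independent of the chosen representative $e$ of an object and of the choice of $f$.

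For $M_{\rho^a}$, I would use that $\rho^a(Se)=\rho(e,ea,f)$ is an isomorphism precisely when $ea\mathscr{R}e$ and $ea\mathscr{L}f$ (the iso-part criterion implicit in the normal factorization of Proposition~\ref{pro1}); since $f\mathscr{L}a$ the second condition reads $ea\mathscr{L}a$. The inclusion $\supseteq$ is immediate: if $Se=Se'$ with $e'\in E(R_a)$ then $e'a=a$ and $\rho^a(Se')=\rho(e',a,f)$ is an isomorphism as above. For $\subseteq$, the conditions $ea\mathscr{R}e$ and $ea\mathscr{L}a$ say exactly that $ea\in R_e\cap L_a$, and here the main tool is the classical location-of-idempotents theorem (Clifford--Preston): $ea\in R_e\cap L_a$ forces $L_e\cap R_a$ to contain an idempotent $e'$. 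Then $e'\mathscr{L}e$ gives $Se'=Se$ and $e'\mathscr{R}a$ gives $e'\in E(R_a)$, as required.

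The homomorphism property $\rho^a\ast\rho^b=\rho^{ab}$ is where I expect the real work. By \eqref{eqnsg}, $(\rho^a\ast\rho^b)(Se)=\rho^a(Se)\,(\rho^b(Sa))^\circ$, and the delicate step is to identify the epimorphic part. Taking $f\in E(L_a)$, $h\in E(L_b)$ and $k\in E(L_{fb})$, the normal factorization of $\rho^b(Sa)=\rho^b(Sf)=\rho(f,fb,h)$ has epimorphic part $(\rho(f,fb,h))^\circ=\rho(f,fb,k)$, i.e.\ the same translation corestricted to its image $S(fb)=Sk$. Composing right translations and using $af=a$ yields $\rho(e,ea,f)\,\rho(f,fb,k)=\rho(e,eafb,k)=\rho(e,eab,k)$. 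Finally, because $\mathscr{L}$ is a right congruence, $a\mathscr{L}f$ gives $ab\mathscr{L}fb$, so $k\in E(L_{fb})=E(L_{ab})$ and hence $\rho(e,eab,k)=\rho^{ab}(Se)$ by the representative-independence of Proposition~\ref{pro1}. The idempotency statement then follows quickly: if $a\in E(S)$ take $f=a$, so that $\rho^a(Sa)=\rho(a,a,a)=1_{Sa}$ and \eqref{eqnsg} gives $\rho^a\ast\rho^a=\rho^a$; conversely $\rho^a\ast\rho^a=\rho^{a^2}=\rho^a$ forces, on comparing the equal components at some $e\in E(R_a)$ (where $ea=a$), the equality $a=a^2$.

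Lastly, for a monoid $S$ the map $a\mapsto\rho^a$ is an isomorphism onto $T\mathcal{L}(S)$, the crucial object being the top $S=S1$. For injectivity, $\rho^a=\rho^b$ forces the components at $S1$ to agree, i.e.\ $\rho(1,a,f)=\rho(1,b,h)$, whence Proposition~\ref{pro1} gives $b=1\cdot a=a$. For surjectivity, given any normal cone $\gamma$ I would take $a$ to be the translating element of the morphism $\gamma(S1)=\rho(1,a,f):S1\to c_\gamma$; applying the cone condition to each $Se\subseteq S1$ forces $\gamma(Se)=\rho(e,ea,f)=\rho^a(Se)$, while normality of $\gamma$ squeezes $ea\mathscr{L}a\mathscr{L}f$ at an $M$-object and hence forces $a\mathscr{L}f$, so $Sa=c_\gamma$ and the vertices match, giving $\gamma=\rho^a$. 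I expect the epimorphic-part computation in the homomorphism step to be the main obstacle, the other parts being routine once Propositions~\ref{pro1} and the location theorem are in hand.
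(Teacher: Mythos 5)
Your proof is correct and complete. Note that the paper itself gives no proof of Proposition \ref{pro2} — it is quoted as a known result from Nambooripad's theory of cross-connections — and your reconstruction follows exactly the standard argument of that source: the cone and normality checks via the equality and isomorphism criteria of Proposition \ref{pro1}, the Miller--Clifford location theorem for $M_{\rho^a}$, the identification of the epimorphic part $(\rho(f,fb,h))^\circ=\rho(f,fb,k)$ with $k\in E(L_{fb})=E(L_{ab})$ in the product \eqref{eqnsg}, and the top-object argument at $S=S1$ for injectivity and surjectivity in the monoid case.
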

\begin{rmk} \label{rmkD}
  If $S^\text{{op}}$ denote the opposite semigroup of $S$ with multiplication
  given by $a\circ b = b.a$ where the right hand side is the product in
  $S$, then it is easy to see that $\mathcal{L}(S^\text{{op}})=\mathcal{R}(S)$ and
  $\mathcal{R}(S^\text{{op}})=\mathcal{L}(S)$. Using these it is possible to translate any
  statement about the category $\mathcal{R}(S)$ of right ideals of a semigroup
  $S$ as a statement regarding the category $\mathcal{L}(S^\text{{op}})$ of left
  ideals of the opposite semigroup $S^\text{{op}}$ and vice versa. Thus given a regular semigroup S, we have the following normal category of principal right ideals $\mathcal{R}(S)$ and the dual statements of the Proposition \ref{pro1} holds true for $\mathcal{R}(S)$.
\begin{equation}
  v\mathcal{R}(S) = \{R(e) = eS : e \in E(S)\}\quad\text{and}  \quad 
  \mathcal{R}(S) =\{\kappa(e,s,f) : e,f \in E(S),\; s\in fSe\}.    
\end{equation}
\end{rmk}
\section{Normal categories in a completely simple semigroup.}
Given a completely simple semigroup $S$, it is known that $S$ is isomorphic to a Rees matrix semigroup $S = \mathscr{M}[G;I,\Lambda;P]$ where $G$ is a group $I$ and $\Lambda$ are sets and $P = (p_{\lambda i})$ is a $\Lambda \times I$ matrix with entries in $G$(cf. \cite{rees}). Note that $S = G \times I \times \Lambda$ and the binary operation is given by 
$$(a,i,\lambda)(b,j,\mu) = (ap_{\lambda j}b,i,\mu) .$$
It is easy to see that $(g_1,i_1,\lambda_1) \mathscr{L} (g_2,i_2,\lambda_2)$ if and only if $\lambda_1 = \lambda_2$ (cf. \cite{howie}). Also $(a,i,\lambda)$ is an idempotent if and only if $a = (p_{\lambda i})^{-1}$. Observe that in this case, every principal left ideal\index{ideal!principal left} $Se$ where $e=(a,j,\lambda)$ is an idempotent can be described as $Se = \{ (g,i,\lambda)\: :\: g\in G,\:i \in I \}$. So
$Se$ depends only on $\lambda$ and we may write $ Se = G \times I \times \{\lambda\} $. Thus 
$$v\mathcal{L}(S) = \{ G \times I \times \{\lambda\} : \lambda \in \Lambda\}.$$ 
Henceforth we will denote the left ideal $Se = G \times I \times \lambda $ by $\bar{\lambda}$ and the set $v\mathcal{L}(S)$ will be denoted by $\bar{\Lambda}$.\\
Recall that any morphism from $Se=\bar{\lambda_1}$ to $Sf=\bar{\lambda_2}$ will be of the form $\rho(e,u,f)$ where $u \in eSf$ (see equation \ref{eqnLS}). Now for $e = (a,j,\lambda_1)$ and $f= (b,k,\lambda_2)$, it is easy to see that $eSf = \{ (k,j,\lambda_2) \: : \: k \in G \}$. Also for $u =(k,j,\lambda_2) \in eSf$ and $(h,i,\lambda_1)\in \bar{\lambda_1}$, we have $(h,i,\lambda_1)(k,j,\lambda_2) = (hp_{\lambda_1j}k,i,\lambda_2)$. Hence for $(h,i,\lambda_1) \in \bar{\lambda_1}$, the morphism $\rho(e,u,f)$ maps $(h,i,\lambda_1) \mapsto (hp_{\lambda_1j}k,i,\lambda_2)$ where $u= (k,j,\lambda_2)$. Observe that morphism involves only the right translation of the group element $h$ to $hp_{\lambda_1j}k$. And hence the morphism is essentially $h \mapsto hg$ such that $g \in G$ for some arbitrary $g\in G$ (taking $p_{\lambda_1j}k = g$). And so any morphism from $\bar{\lambda_1}$ to $\bar{\lambda_2}$ is determined by an element $g \in G$. We may denote this morphism by $\rho(g)$ so that 
$$\rho(g) : (h,i,\lambda_1)\mapsto(hg,i,\lambda_2).$$
Hence the set of morphisms from $\bar{\lambda_1}$ to $\bar{\lambda_2}$ can be represented by the set $G$. 
\begin{pro}\label{proiso}
Every morphism in $\mathcal{L}(S)$ is an isomorphism.
\end{pro}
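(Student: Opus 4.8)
The plan is to exploit the group structure of $G$ directly. From the discussion preceding the statement, every morphism $\bar{\lambda_1} \to \bar{\lambda_2}$ is of the form $\rho(g)$ for some $g \in G$, acting by $(h,i,\lambda_1) \mapsto (hg, i, \lambda_2)$; that is, it is right translation by $g$ in the group coordinate, fixing the $I$-coordinate. The essential point is that such right translations are invertible precisely because $G$ is a group, and this is what I would make explicit.

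First I would record how composition of these morphisms behaves. Given $\rho(g): \bar{\lambda_1} \to \bar{\lambda_2}$ and $\rho(g'): \bar{\lambda_2} \to \bar{\lambda_3}$, tracing an arbitrary element gives $(h,i,\lambda_1) \mapsto (hg,i,\lambda_2) \mapsto (hgg', i, \lambda_3)$, so that $\rho(g)\rho(g') = \rho(gg')$; thus composition in $\mathcal{L}(S)$ corresponds to multiplication in $G$, with the diagrammatic (left-to-right) convention already used for the normal factorization in Proposition \ref{pro1}. In particular, the identity morphism on any object $\bar{\lambda}$ is $\rho(1_G)$, where $1_G$ is the identity of $G$, since $\rho(1_G)$ fixes every $(h,i,\lambda)$.

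With this dictionary in hand, the conclusion is immediate. Given any morphism $\rho(g): \bar{\lambda_1} \to \bar{\lambda_2}$, consider $\rho(g^{-1}): \bar{\lambda_2} \to \bar{\lambda_1}$, which exists since $g^{-1} \in G$. Then $\rho(g)\rho(g^{-1}) = \rho(gg^{-1}) = \rho(1_G) = 1_{\bar{\lambda_1}}$ and $\rho(g^{-1})\rho(g) = \rho(g^{-1}g) = \rho(1_G) = 1_{\bar{\lambda_2}}$, so $\rho(g^{-1})$ is a two-sided inverse of $\rho(g)$, whence $\rho(g)$ is an isomorphism.

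I do not expect any genuine obstacle here; the only things requiring care are the bookkeeping for the composition convention (matching the right-translation description of morphisms) and the explicit identification of $\rho(1_G)$ as the identity morphism. Once the composition law $\rho(g)\rho(g') = \rho(gg')$ is confirmed, invertibility of every morphism follows purely from the existence of inverses in the group $G$.
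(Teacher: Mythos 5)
Your proof is correct, and while the underlying idea is the same as the paper's --- morphisms are right translations, and inverses in $G$ supply inverse morphisms --- your route is genuinely cleaner and differs in its key intermediate step. The paper's proof stays in raw Rees coordinates: it writes the morphism in the unreduced form $(h,i,\lambda_1)\mapsto(hp_{\lambda_1 j}k,i,\lambda_2)$, i.e.\ as right multiplication by an actual semigroup element $u=(k,j,\lambda_2)$, so the sandwich entries enter every composition; it then exhibits a right inverse $\rho(g')$ with $g'=p_{\lambda_2 i}^{-1}k^{-1}p_{\lambda_1 j}^{-1}$ and a formally different left inverse $\rho(g'')$ with $g''=k^{-1}p_{\lambda_1 j}^{-1}p_{\lambda_1 i}^{-1}$, checking each by direct coordinate computation. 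You instead work with the reduced representation $\rho(g):(h,i,\lambda_1)\mapsto(hg,i,\lambda_2)$ established just before the statement, prove the composition law $\rho(g)\rho(g')=\rho(gg')$, identify $\rho(1_G)$ as the identity morphism, and then obtain the single two-sided inverse $\rho(g^{-1})$ with no sandwich-matrix bookkeeping at all. What your approach buys is the dictionary ``hom-sets $\cong G$, composition $=$ group multiplication,'' which is precisely what makes $\mathcal{L}(S)$ a connected groupoid with vertex group $G$, and which the paper in effect re-derives later when describing the normal dual $N^\ast\mathcal{L}(S)$; what the paper's computation buys is that invertibility is witnessed by explicit elements of $S$ without ever leaving the literal definition of morphisms as $\rho(e,u,f)$. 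One point both arguments depend on, which you correctly inherit from the preceding discussion rather than reprove: every $g\in G$ does occur as a morphism between any ordered pair of objects (surjectivity of the representation $g\mapsto\rho(g)$), which is what guarantees that $\rho(g^{-1}):\bar{\lambda_2}\to\bar{\lambda_1}$ exists in the category.
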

\begin{proof}
By the discussion above any morphism $\rho(g):\bar{\lambda_1} \to \bar{\lambda_2}$ in $\mathcal{L}(S)$ is given by $ (h,i,\lambda_1)\mapsto(hp_{\lambda_1j}k,i,\lambda_2)$. Now we show that for $g'=p_{\lambda_2i}^{-1}k^{-1}p_{\lambda_1j}^{-1}$, $\rho(g'):\bar{\lambda_2} \to \bar{\lambda_1}$ is the right inverse of $\rho(g)$. To see that observe that $\rho(g)\rho(g'): (h,i,\lambda_1)\mapsto(hp_{\lambda_1j}k,i,\lambda_2)(p_{\lambda_2i}^{-1}k^{-1}p_{\lambda_1j}^{-1},i,\lambda_1) = (hp_{\lambda_1j}k p_{\lambda_2i} p_{\lambda_2i}^{-1}k^{-1}p_{\lambda_1j}^{-1},i,\lambda_1) =(h,i,\lambda_1)$. Also for $g''= k^{-1}p_{\lambda_1j}^{-1}p_{\lambda_1i}^{-1}$, we see that $\rho(g'')\rho(g):(h,i,\lambda_2)\mapsto (hk^{-1}p_{\lambda_1j}^{-1}p_{\lambda_1i}^{-1},i,\lambda_1)$ 
$(p_{\lambda_1j}k,i,\lambda_2) = (hk^{-1}p_{\lambda_1j}^{-1}p_{\lambda_1i}^{-1}p_{\lambda_1i}p_{\lambda_1j}k, i,\lambda_2) =(h,i,\lambda_2)$. And so $\rho(g''):\bar{\lambda_2} \to \bar{\lambda_1}$ is the left inverse of $\rho(g)$. Hence every morphism is an isomorphism in $\mathcal{L}(S)$.
\end{proof}
\begin{rmk}\label{rmkgrp}
Since every morphism in $\mathcal{L}(S)$ is an isomorphism\index{isomorphism}, $\mathcal{L}(S)$ is a \emph{groupoid}\index{groupoid}. So in $\mathcal{L}(S)$, all inclusions\index{inclusion} are identities and the epimorphic component\index{epimorphic component!of a morphism} of every morphism will be the morphism itself.   
\end{rmk}
Now we characterize the normal cones\index{cone!normal} in $\mathcal{L}(S)$. Recall that a cone in $\mathcal{C}$ with vertex $d \in v\mathcal{C}$ is a map $\gamma:v\mathcal{C}\to\mathcal{C}$ such that $\gamma(c)\in \mathcal{C}(c,d)$ for all $c\in v\mathcal{C}$ and whenever $c'\subseteq c$ then $j(c',c)\gamma(c) = \gamma(c')$. Since there are no non trivial inclusions in $\mathcal{L}(S)$, the second condition is redundant. So every cone in $\mathcal{L}(S)$ with vertex $\bar{\mu}$ is a mapping $\gamma : v\mathcal{L}(S) \to \mathcal{L}(S)$ such that for any $\bar{\lambda} \in \bar{\Lambda}$, $\gamma(\bar{\lambda}):\bar{\lambda} \to \bar{\mu}$ is a morphism in $\mathcal{L}(S)$. Let $\gamma(\bar{\lambda}) = \rho(g)$ for some $g =g(\lambda) \in G$. Now for any other $\bar{\lambda_2}\in \bar{\Lambda}$, we have $\gamma(\bar{\lambda_2}) = \rho(h)$ for some $h \in G$. Since no relation between $g$ and $h$ is required in describing cones in this case; any choice of $g(\lambda) \in G$ for $\lambda \in \Lambda$ produces a normal cone. Thus a cone with vertex $\bar{\lambda}$ can be represented as $(\bar{\gamma},\lambda)$ where $\bar{\gamma} \in G^{\Lambda}$. Also since every morphism is an isomorphism, every cone in $\mathcal{L}(S)$ will be a normal cone.\\
Now $G^\Lambda$ has the structure of a group as the direct product of $G$ over $\Lambda$. And $\Lambda$ (coming from the Rees matrix semigroup) is a right zero semigroup.
\begin{lem}\label{lemsgls}
$G^\Lambda \times \Lambda$ is a semigroup with the binary operation defined as follows. Given $\gamma_1 = (\bar{\gamma_1},\lambda_k)$, $\gamma_2 = (\bar{\gamma_2},\lambda_l) \in G^\Lambda \times \Lambda$
\begin{equation}\label{eqnsgls}
\gamma_1 \ast \gamma_2  = (\bar{\gamma_1}.\bar{g_k} ,\lambda_l)
\end{equation}
where $g_k = \bar{\gamma_2}(\lambda_k)$ and $\bar{g_k} = (g_k,g_k,g_k,...) \in G^\Lambda$.
\end{lem}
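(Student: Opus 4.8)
The plan is to verify that $\ast$ is a well-defined associative operation; closure is immediate, since $\bar{\gamma_1}\cdot\bar{g_k}$ is again an element of the group $G^\Lambda$ and $\lambda_l\in\Lambda$, so the whole content of the lemma is associativity. Before computing I would isolate the single structural fact that makes everything fit together: the ``constant'' map $G\to G^\Lambda$, $g\mapsto\bar g=(g,g,g,\dots)$, is a group homomorphism, i.e. $\overline{gh}=\bar g\cdot\bar h$ with the pointwise product on the right. I would also record the obvious companion facts that evaluation $\bar\eta\mapsto\bar\eta(\lambda)$ is multiplicative and that a constant function takes the same value at every argument; these are exactly what reconcile the ``evaluate at $\lambda_k$, then spread to a constant'' step built into the definition of $\ast$.

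Next I would run the two bracketings on $\gamma_1=(\bar{\gamma_1},\lambda_k)$, $\gamma_2=(\bar{\gamma_2},\lambda_l)$, $\gamma_3=(\bar{\gamma_3},\lambda_m)$, writing $g=\bar{\gamma_2}(\lambda_k)$ and $h=\bar{\gamma_3}(\lambda_l)$. On one side, $(\gamma_1\ast\gamma_2)\ast\gamma_3$ first produces $(\bar{\gamma_1}\cdot\bar g,\lambda_l)$ and then multiplies on the right by the constant $\bar h$, giving first coordinate $(\bar{\gamma_1}\cdot\bar g)\cdot\bar h$ and second coordinate $\lambda_m$. On the other side, $\gamma_1\ast(\gamma_2\ast\gamma_3)$ first produces $(\bar{\gamma_2}\cdot\bar h,\lambda_m)$, and the scalar it then contributes is extracted at $\lambda_k$, namely $(\bar{\gamma_2}\cdot\bar h)(\lambda_k)=\bar{\gamma_2}(\lambda_k)\,h=gh$, so the first coordinate becomes $\bar{\gamma_1}\cdot\overline{gh}$ and again the second coordinate is $\lambda_m$. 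The two first coordinates agree by $(\bar{\gamma_1}\cdot\bar g)\cdot\bar h=\bar{\gamma_1}\cdot(\bar g\cdot\bar h)=\bar{\gamma_1}\cdot\overline{gh}$, using associativity in $G^\Lambda$ together with the homomorphism property; the second coordinates agree because $\Lambda$ behaves as a right-zero semigroup in the last slot. This is the whole proof.

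The one point that demands care --- and the natural obstacle --- is the asymmetry in where the scalar correction is extracted in the two bracketings: on the left it is pulled out before $\gamma_3$ acts, on the right only after $\gamma_2$ has been combined with $\gamma_3$. The computation shows these coincide precisely because $\bar h$ is constant, so its value at $\lambda_k$ equals $h$, and because $g\mapsto\bar g$ respects products; if the correction spread the value of $\bar{\gamma_2}$ non-constantly the two sides would differ, so it is worth flagging that constancy explicitly.

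As an alternative I could bypass the index bookkeeping entirely. Under the identification of a pair $(\bar\gamma,\lambda)$ with the normal cone whose component at $\bar\mu$ is $\rho(\bar\gamma(\mu))$, set up in the discussion preceding the lemma, formula (\ref{eqnsgls}) is nothing but the cone product (\ref{eqnsg}) specialised to the groupoid $\mathcal{L}(S)$, where by Remark \ref{rmkgrp} all inclusions are trivial and every epimorphic part is the morphism itself. Associativity would then be inherited for free from the fact that $T\mathcal{L}(S)$ is a semigroup. I would present the direct verification as the main argument and mention this conceptual route as a remark.
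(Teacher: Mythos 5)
Your proposal is correct and follows essentially the same route as the paper: closure is noted as immediate, and associativity is verified by computing both bracketings of a triple product and reducing the comparison to associativity in the group $G^\Lambda$ together with the fact that the constant-sequence map $g\mapsto\bar g$ respects products (the paper uses this in the step $\overline{(\bar{\gamma_2}\bar{h_l})(\lambda_k)}=\bar{g_k}.\bar{h_l}$, which you make explicit). Your added remark isolating the constancy/homomorphism facts, and the alternative of inheriting associativity from the cone product in $T\mathcal{L}(S)$, are sound refinements but do not change the substance of the argument.
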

\begin{proof}
Observe $(\bar{\gamma_1}.\bar{g_k} ,\lambda_l) \in G^\Lambda \times \Lambda$ and so clearly $\ast$ is a well-defined binary operation on $G^\Lambda \times \Lambda$.\\
Now if $\gamma_1 = (\bar{\gamma_1},\lambda_k)$, $\gamma_2 = (\bar{\gamma_2},\lambda_l)$, $\gamma_3 =(\bar{\gamma_3},\lambda_m) \in G^\Lambda \times \Lambda$; then 
$$(\gamma_1 \ast \gamma_2)\ast \gamma_3  = (\bar{\gamma_1}.\bar{g_k} ,\lambda_l)\ast (\bar{\gamma_3},\lambda_m) = ((\bar{\gamma_1}.\bar{g_k}).\bar{h_l} ,\lambda_m)$$
where $h_l = \bar{\gamma_3}(\lambda_l)$. And as $\bar{\gamma_2}\bar{h_l}(\lambda_k) = \bar{g_k}.\bar{h_l}$,
$$\gamma_1 \ast (\gamma_2\ast \gamma_3)  = (\bar{\gamma_1},\lambda_k)\ast (\bar{\gamma_2}\bar{h_l},\lambda_m) = (\bar{\gamma_1}.(\bar{g_k}.\bar{h_l}) ,\lambda_m)$$
Now since $(\bar{\gamma_1}.\bar{g_k}).\bar{h_l} = \bar{\gamma_1}.(\bar{g_k}.\bar{h_l}) = \bar{\gamma_1}.\bar{g_k}.\bar{h_l}$ (being multiplication in the group $G^\Lambda$ and hence associative); we have 
$$(\gamma_1 \ast \gamma_2)\ast \gamma_3 = \gamma_1 \ast (\gamma_2\ast \gamma_3) \text{ for all } \gamma_1,\gamma_2,\gamma_3 \in G^\Lambda \times \Lambda.$$ 
Hence $\ast$ is associative and so $(G^\Lambda \times \Lambda, \ast)$ is a semigroup.
\end{proof}
\begin{pro}\label{prosgls}
The semigroup\index{semigroup!of normal cones} $T\mathcal{L}(S)$ of all normal cones in $\mathcal{L}(S)$ is isomorphic to $G^\Lambda \times \Lambda$.
\end{pro}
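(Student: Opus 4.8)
The plan is to exhibit an explicit bijection between $T\mathcal{L}(S)$ and $G^\Lambda \times \Lambda$ and then verify that it respects the semigroup products, namely the cone product of Equation~\eqref{eqnsg} on the left and the operation $\ast$ of Equation~\eqref{eqnsgls} on the right. The discussion preceding Lemma~\ref{lemsgls} already establishes the underlying set bijection: a normal cone $\gamma$ with vertex $\bar\mu$ is completely determined by the data $(\bar\gamma,\mu)$ where $\bar\gamma \in G^\Lambda$ records, for each $\lambda \in \Lambda$, the group element $g(\lambda)$ with $\gamma(\bar\lambda) = \rho(g(\lambda))$. Conversely, by Remark~\ref{rmkgrp} every such choice yields a genuine normal cone, since there are no nontrivial inclusions to constrain the components and every morphism is an isomorphism. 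So I would begin by naming this map $\Phi : T\mathcal{L}(S) \to G^\Lambda \times \Lambda$, $\gamma \mapsto (\bar\gamma,\mu)$, and remarking that it is a well-defined bijection.

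The substantive step is to show $\Phi$ is a homomorphism, i.e.\ that $\Phi(\gamma_1 * \gamma_2) = \Phi(\gamma_1) \ast \Phi(\gamma_2)$. Here I would take $\gamma_1 = (\bar{\gamma_1},\lambda_k)$ and $\gamma_2 = (\bar{\gamma_2},\lambda_l)$ and compute the left-hand cone product component by component using Equation~\eqref{eqnsg}: for each $\bar\lambda$,
\begin{equation*}
(\gamma_1 * \gamma_2)(\bar\lambda) = \gamma_1(\bar\lambda)\,\bigl(\gamma_2(c_{\gamma_1})\bigr)^\circ .
\end{equation*}
The vertex of $\gamma_1$ is $c_{\gamma_1} = \bar{\lambda_k}$, so the relevant component of $\gamma_2$ is $\gamma_2(\bar{\lambda_k}) = \rho(g_k)$ where $g_k = \bar{\gamma_2}(\lambda_k)$. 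By Remark~\ref{rmkgrp} the epimorphic part of any morphism is the morphism itself, so $(\gamma_2(\bar{\lambda_k}))^\circ = \rho(g_k)$. Since composition of the morphisms $\rho(\cdot)$ corresponds to right multiplication in $G$ (if $\gamma_1(\bar\lambda) = \rho(g(\lambda))$ then $\gamma_1(\bar\lambda)\rho(g_k) = \rho(g(\lambda)\,g_k)$, reading the maps $h \mapsto hg$ left to right), each component of $\gamma_1 * \gamma_2$ is $\rho(g(\lambda)\,g_k)$ and its vertex is $\bar{\lambda_l}$. Translating back through $\Phi$, this says precisely that the $G^\Lambda$-part of $\gamma_1 * \gamma_2$ is $\bar{\gamma_1}\cdot \bar{g_k}$ and the $\Lambda$-part is $\lambda_l$, which is exactly $(\bar{\gamma_1}\cdot\bar{g_k},\lambda_l) = \Phi(\gamma_1)\ast\Phi(\gamma_2)$.

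The main obstacle, such as it is, is bookkeeping rather than depth: I must be careful that the single constant group element $g_k = \bar{\gamma_2}(\lambda_k)$ — evaluated only at the vertex $\lambda_k$ of $\gamma_1$ — is what multiplies \emph{every} component of $\gamma_1$, which is exactly why the operation $\ast$ multiplies $\bar{\gamma_1}$ by the \emph{constant} tuple $\bar{g_k} = (g_k,g_k,\dots)$ rather than by $\bar{\gamma_2}$ itself; this is the semi-direct-product twist, and getting the composition order and the epimorphic-part simplification right is the only place an error could creep in. Once the homomorphism property is checked, injectivity and surjectivity of $\Phi$ follow immediately from the parametrization above, so $\Phi$ is an isomorphism and $T\mathcal{L}(S) \cong G^\Lambda \times \Lambda$, completing the proof. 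As a sanity check I would confirm that this matches the associativity computation already carried out in Lemma~\ref{lemsgls}, since both rest on the same interplay between the group multiplication in $G^\Lambda$ and the evaluation-at-a-point map.
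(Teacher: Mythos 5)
Your proposal is correct and takes essentially the same approach as the paper: both rest on the parametrization of a normal cone as a pair $(\bar{\gamma},\lambda)$ with $\bar{\gamma}\in G^\Lambda$, and both verify the multiplication by computing $\gamma_1(\bar{\lambda})\,(\gamma_2(c_{\gamma_1}))^\circ$, using that $c_{\gamma_1}=\bar{\lambda_k}$, that $\gamma_2(\bar{\lambda_k})=\rho(g_k)$ with $g_k=\bar{\gamma_2}(\lambda_k)$, and that every morphism equals its own epimorphic part, yielding $(\bar{\gamma_1}\cdot\bar{g_k},\lambda_l)$. The only difference is presentational: you name the bijection $\Phi$ and check the homomorphism property explicitly, whereas the paper identifies $T\mathcal{L}(S)$ with $G^\Lambda\times\Lambda$ as sets outright and then matches the two products.
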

\begin{proof}
Since a normal cone with vertex $\bar{\lambda}$ can be represented as $(\bar{\gamma},\lambda)$ where $\bar{\gamma} \in G^{\Lambda}$, $T\mathcal{L}(S) \subseteq G^\Lambda \times \Lambda$. Conversely since any $(\bar{\gamma},\lambda) \in G^\Lambda \times \Lambda$ represents a cone in $\mathcal{L}(S)$, $G^\Lambda \times \Lambda \subseteq T\mathcal{L}(S)$ and hence $G^\Lambda \times \Lambda = T\mathcal{L}(S)$.\\ 
Now the multiplication in $T\mathcal{L}(S)$ is defined as for any $\lambda \in \Lambda$, $(\gamma_1 * \gamma_2)(\lambda) = \gamma_1(\lambda) (\gamma_2(c_\gamma))^\circ$ (see equation \ref{eqnsg}) for $\gamma_1, \gamma_2 \in T\mathcal{L}(S)$. Let $\gamma_1 = (\bar{\gamma_1},\lambda_k)$, $\gamma_2 =(\bar{\gamma_2},\lambda_l) \in G^\Lambda \times \Lambda$,
$\gamma_2(c_{\gamma_1}) = (\bar{g_k},\lambda_l)$ where $g_k = \bar{\gamma_2}(\lambda_k)$ and $\bar{g_k} = (g_k,g_k,g_k,...) \in G^\Lambda$. And since the epimorphic component of any morphism in $\mathcal{L}(S)$ is itself $\gamma_2(c_{\gamma_1})^\circ = (\bar{g_k},\lambda_l)$.\\
Therefore
$$ \gamma_1 \ast \gamma_2  = \gamma_1 .(\gamma_2(c_{\gamma_1}))^\circ =  (\bar{\gamma_1},\lambda_k).(\bar{g_k},\lambda_l) = (\bar{\gamma_1}.\bar{g_k} ,\lambda_l). $$
And hence $T\mathcal{L}(S)$ is isomorphic to $G^\Lambda \times \Lambda$.
\end{proof}
We further observe that the semigroup obtained here can be realised as a semi-direct product of semigroups.
\begin{dfn} \label{lac}
Let S and T be semigroups. A \emph{(left) action} of T on S is a map $\psi: T \times S \to S$,
satisfying: (i) $({t_1t_2},s)\psi =\: (t_1,(t_2,s)\psi)\psi$ and\\
(ii) $({t},s_1s_2)\psi =\: ({t},s_1)\psi(t,s_2)\psi$ for all $t,t_1,t_2 \in T$ and $s,s_1,s_2 \in S$. We denote $(t,s)\psi = t \ast s$.
\end{dfn}
\begin{dfn} \label{sdp}
Let S and T be semigroups. The \emph{semidirect product}\index{semigroup!semidirect product} $S \ltimes T$ of S and T , with respect to a left action $\psi$ of $T$ on $S$ is defined as $S \times T$ with multiplication given by 
$$(s_1,t_1)(s_2,t_2)\: =\: (s_1\:(t_1 \ast s_2),t_1t_2)$$
\end{dfn}
It is well known that $S\ltimes T$ is a semigroup. It is trivially verified that the idempotents
in $S\ltimes T$ are the pairs $(s,t)$ such that $t \in E(T)$ and $s(t \ast s) = s$.\\
Now we show that the semigroup of normal cones in $\mathcal{L}(S)$ is isomorphic to a semi-direct product $G^\Lambda \ltimes \Lambda$ of $G^\Lambda $ and $\Lambda$.\\
Firstly, we look at the semigroups $G^\Lambda$ and $\Lambda$. Since $G$ is a group; $G^\Lambda$ will form a group under component-wise multiplication defined as follows.\\
For $(g_1,g_2...)$,$(h_1,h_2...) \in G^\Lambda$ 
$$(g_1,g_2...)(h_1,h_2...) \: = \: (g_1h_1,g_2h_2,...) $$
and hence in particular $G^\Lambda $ is also a semigroup. The set $\Lambda$ admits a right zero semigroup structure and hence has an in-built multiplication given by $\lambda_k \lambda_l = \lambda_l $ for every $\lambda_k, \lambda_l \in \Lambda$.\\ 
Now we define a left action of $\Lambda$ on $G^\Lambda$ as follows. For $\lambda_k \in \Lambda$ and $\bar{g} = (g_1,g_2,...) \in G^\Lambda$,
\begin{equation}\label{eqnlac}
\lambda_k \ast \bar{g} \:=\: (g_k,g_k,...)
\end{equation}
where $g_k = \bar{g}(\lambda_k)$.
\begin{lem}\label{lemlac}
The function as defined in equation \ref{eqnlac} is a left action of $\Lambda$ on $G^\Lambda$. 
\end{lem}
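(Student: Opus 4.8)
The plan is to verify directly that the map defined in equation \ref{eqnlac} satisfies the two axioms of a left action from Definition \ref{lac}. The statement asserts that $\psi : \Lambda \times G^\Lambda \to G^\Lambda$ given by $\lambda_k \ast \bar{g} = (g_k, g_k, \dots)$, where $g_k = \bar{g}(\lambda_k)$, is a left action of $\Lambda$ on $G^\Lambda$. Since $\Lambda$ is a right zero semigroup (so $\lambda_k \lambda_l = \lambda_l$) and $G^\Lambda$ is a group under componentwise multiplication, both axioms reduce to elementary computations on constant tuples.

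First I would check axiom (i), namely $(\lambda_k \lambda_l) \ast \bar{g} = \lambda_k \ast (\lambda_l \ast \bar{g})$. On the left, using $\lambda_k \lambda_l = \lambda_l$, we get $\lambda_l \ast \bar{g} = (g_l, g_l, \dots)$ where $g_l = \bar{g}(\lambda_l)$. On the right, I would first compute $\lambda_l \ast \bar{g} = (g_l, g_l, \dots)$, then apply $\lambda_k \ast$ to this constant tuple; since every component of the constant tuple equals $g_l$, in particular its $\lambda_k$-component is $g_l$, so $\lambda_k \ast (g_l, g_l, \dots) = (g_l, g_l, \dots)$. Both sides agree. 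The key observation making this work is that applying the action always produces a constant tuple, and a further application of the action to a constant tuple leaves it fixed; this is exactly where the right-zero structure of $\Lambda$ is consumed.

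Next I would check axiom (ii), namely $\lambda_k \ast (\bar{g}\,\bar{h}) = (\lambda_k \ast \bar{g})(\lambda_k \ast \bar{h})$ for $\bar{g}, \bar{h} \in G^\Lambda$. Writing $\bar{g} = (g_1, g_2, \dots)$ and $\bar{h} = (h_1, h_2, \dots)$, the product $\bar{g}\,\bar{h}$ has $\lambda_k$-component $g_k h_k$, so the left side is the constant tuple $(g_k h_k, g_k h_k, \dots)$. The right side is the componentwise product $(g_k, g_k, \dots)(h_k, h_k, \dots) = (g_k h_k, g_k h_k, \dots)$, using the group multiplication in $G^\Lambda$. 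The two sides coincide, establishing (ii).

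These verifications are entirely routine, so there is no genuine obstacle; the only point requiring a moment's care is axiom (i), where one must use that the action output is constant so that evaluating it at any index $\lambda_k$ returns the same group element, together with the right-zero identity $\lambda_k \lambda_l = \lambda_l$. Having confirmed both axioms, I would conclude that $\psi$ is a left action, which completes the proof.
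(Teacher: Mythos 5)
Your proposal is correct and follows essentially the same route as the paper: a direct verification of both axioms of Definition \ref{lac}, using the right-zero identity $\lambda_k\lambda_l=\lambda_l$ together with the fact that the action outputs a constant tuple for axiom (i), and componentwise multiplication in $G^\Lambda$ for axiom (ii). Nothing is missing.
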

\begin{proof}
Clearly the function is well-defined. Now for $\bar{g} = (g_1,g_2,...) \in G^\Lambda$ and $\lambda_k, \lambda_l \in \Lambda $, since $\lambda_k\lambda_l = \lambda_l$, 
$$\lambda_k\lambda_l\ast\bar{g} = \lambda_k\lambda_l\ast(g_1,g_2,...) = \lambda_l\ast(g_1,g_2,...) = (g_l,g_l,...).$$
Also $$\lambda_k\ast(\lambda_l\ast(\bar{g}))\:=\:\lambda_k\ast(\lambda_l\ast(g_1,g_2,...)) \: = \: \lambda_k\ast(g_l,g_l,...) \: = \:  (g_l,g_l,...).$$
Hence $\lambda_k\lambda_l\ast\bar{g}\: =\: \lambda_k\ast(\lambda_l\ast(\bar{g})) $ for $\bar{g} \in G^\Lambda$ and $\lambda_k,\lambda_l \in \Lambda$.\\
Then for $\bar{g} = (g_1,g_2,...), \bar{h} = (h_1,h_2,...) \in G^\Lambda$, and $\lambda_k \in \Lambda$,
$$\lambda_k \ast(\bar{g}\bar{h}) \:=\: \lambda_k \ast ((g_1,g_2...)(h_1,h_2...),)) \:=\: \lambda_k \ast(g_1h_1,g_2h_2,...)\:=\: (g_kh_k,g_kh_k,...).$$
And\\
$(\lambda_k \ast \bar{g}) (\lambda_k\ast \bar{h}) \:=\: (\lambda_k \ast (g_1,g_2,...))(\lambda_k \ast (h_1,h_2,...)) \:$
\flushright $ = \: (g_k,g_k,...)(h_k,h_k,...) \: = \: (g_kh_k,g_kh_k,...)$.\\
\flushleft And hence $\lambda_k \ast(\bar{g}\bar{h}) \:=\: (\lambda_k \ast \bar{g}) (\lambda_k\ast \bar{h})$ for $\bar{g} , \bar{h} \in G^\Lambda$, and $\lambda_k \in \Lambda$.\\
Thus equation \ref{eqnlac} is a left semigroup action of $\Lambda$ on $G^\Lambda$.
\end{proof}
\begin{pro}
$T\mathcal{L}(S)$ is the semi-direct\index{semigroup!semidirect product} product $G^\Lambda \ltimes \Lambda$ with respect to the left action above.
\end{pro}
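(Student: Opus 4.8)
The plan is to leverage the identification already established in Proposition~\ref{prosgls}, so that essentially nothing new needs to be computed beyond matching two multiplication formulas. By Proposition~\ref{prosgls} the semigroup $T\mathcal{L}(S)$ is isomorphic to the set $G^\Lambda \times \Lambda$ carrying the product of equation~\ref{eqnsgls}, while the semidirect product $G^\Lambda \ltimes \Lambda$ of Definition~\ref{sdp} has the very same underlying set $G^\Lambda \times \Lambda$. Hence it suffices to verify that the semidirect product multiplication determined by the left action of equation~\ref{eqnlac} agrees with the product $\ast$ of equation~\ref{eqnsgls}.

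First I would invoke Lemma~\ref{lemlac}, which guarantees that the map in equation~\ref{eqnlac} is a genuine left action of the right-zero semigroup $\Lambda$ on the group $G^\Lambda$; together with the standard fact recorded after Definition~\ref{sdp} this makes $G^\Lambda \ltimes \Lambda$ a well-defined semigroup. Next I would take arbitrary cones $\gamma_1 = (\bar{\gamma_1},\lambda_k)$ and $\gamma_2 = (\bar{\gamma_2},\lambda_l)$ and write out their product in $G^\Lambda \ltimes \Lambda$ directly from Definition~\ref{sdp}, obtaining $(\bar{\gamma_1}(\lambda_k \ast \bar{\gamma_2}),\, \lambda_k\lambda_l)$.

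The decisive step is then a substitution of the two structural ingredients. From equation~\ref{eqnlac} the action component is $\lambda_k \ast \bar{\gamma_2} = (g_k,g_k,\ldots) = \bar{g_k}$, where $g_k = \bar{\gamma_2}(\lambda_k)$; and from the right-zero structure of $\Lambda$ we have $\lambda_k\lambda_l = \lambda_l$. Feeding both into the expression above yields $(\bar{\gamma_1}\cdot\bar{g_k},\,\lambda_l)$, which is precisely the product $\gamma_1 \ast \gamma_2$ prescribed by equation~\ref{eqnsgls}. Thus the two semigroup structures on $G^\Lambda \times \Lambda$ coincide, and composing with the isomorphism of Proposition~\ref{prosgls} gives $T\mathcal{L}(S) \cong G^\Lambda \ltimes \Lambda$.

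I expect no serious obstacle here, since all the analytical content has been absorbed into Proposition~\ref{prosgls} and Lemma~\ref{lemlac}; the only point requiring care is the bookkeeping identification, namely recognising that the diagonal element $\bar{g_k}$ appearing in equation~\ref{eqnsgls} is exactly the action value $\lambda_k \ast \bar{\gamma_2}$, and that the right-zero product supplies the correct second coordinate $\lambda_l$. Once these two matches are confirmed, the isomorphism is immediate.
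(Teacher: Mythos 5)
Your proposal is correct and follows essentially the same route as the paper: both write out the semidirect product multiplication from Definition~\ref{sdp}, substitute $\lambda_k \ast \bar{\gamma_2} = \bar{g_k}$ and $\lambda_k\lambda_l = \lambda_l$, and observe that the result coincides with the product of equation~\ref{eqnsgls} already identified with $T\mathcal{L}(S)$ in Proposition~\ref{prosgls}. Your version is slightly more explicit about invoking Lemma~\ref{lemlac} for well-definedness, but the argument is the same.
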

\begin{proof}
The semi-direct product of $G^\Lambda \ltimes \Lambda$ with respect to the left action is given as follows. For $(\bar{\gamma_1},\lambda_k)$, $(\bar{\gamma_2},\lambda_l) \in G^\Lambda \times \Lambda$,
$$(\bar{\gamma_1},\lambda_k) \ast (\bar{\gamma_2},\lambda_l) \:=\: (\bar{\gamma_1}.(\lambda_k \ast\bar{\gamma_2}) , \lambda_k\lambda_l) \:=\: (\bar{\gamma_1}.\bar{g_k} ,\lambda_l)$$
where $g_k = \bar{\gamma_2}(\lambda_k)$ and $\bar{g_k} = (g_k,g_k,...)$. So if $\gamma_1 = (\bar{\gamma_1},\lambda_k)$ and $\gamma_2 = (\bar{\gamma_2},\lambda_l)$ then the multiplication defined above is exactly the same multiplication defined in equation \ref{eqnsgls} and hence $T\mathcal{L}(S)$ is the semi-direct product $G^\Lambda \ltimes \Lambda$ with respect to the left action above. 
\end{proof}
Now we proceed to characterize the principal cones in $T\mathcal{L}(S)$. 
\begin{pro}\label{proprinci}
The principal cones in $T\mathcal{L}(S)$ forms a regular subsemigroup of $G^\Lambda \ltimes \Lambda$.
\end{pro}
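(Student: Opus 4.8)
The plan is to avoid coordinate computation at the level of closure and regularity, and instead to lean on the homomorphism $a \mapsto \rho^a$ from $S$ into $T\mathcal{L}(S)$ guaranteed by Proposition \ref{pro2}. The set of principal cones is, by definition, the image $\{\rho^a : a \in S\}$ of this homomorphism. Since the image of any semigroup homomorphism is a subsemigroup of the codomain, the principal cones form a subsemigroup of $T\mathcal{L}(S) = G^\Lambda \ltimes \Lambda$; in particular closure under $\ast$ is immediate, with $\rho^a \ast \rho^b = \rho^{ab}$ again a principal cone.

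For regularity I would invoke the standard fact that a homomorphic image of a regular semigroup is regular, applied to $S$ (every completely simple semigroup is regular). Explicitly, given $\rho^a$, choose an inverse $a'$ of $a$ in $S$, so that $a a' a = a$ and $a' a a' = a'$; applying the homomorphism gives $\rho^a \ast \rho^{a'} \ast \rho^a = \rho^{a a' a} = \rho^a$ and $\rho^{a'} \ast \rho^a \ast \rho^{a'} = \rho^{a'}$. Hence $\rho^{a'}$ is an inverse of $\rho^a$ lying again among the principal cones, so the subsemigroup of principal cones is regular.

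To tie this to the coordinate picture used throughout and to exhibit the subsemigroup explicitly, I would finally compute $\rho^a$ for $a = (g,i,\lambda)$. Writing an object $\bar{\mu}$ as $Se$ with $e = (p_{\mu k}^{-1}, k, \mu)$ and evaluating $ea$, the right-translation morphism $\rho(e, ea, f)$ sends $(h, i', \mu) \mapsto (h\, p_{\mu i}\, g,\, i',\, \lambda)$, so its representing group element is $p_{\mu i} g$. Thus $\rho^a = (\bar{\gamma}, \lambda)$ with $\bar{\gamma}(\mu) = p_{\mu i} g$ for all $\mu \in \Lambda$, and the principal cones are precisely those $(\bar{\gamma}, \lambda) \in G^\Lambda \ltimes \Lambda$ whose $G^\Lambda$-coordinate is a right translate of a column of the sandwich matrix $P$. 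The one step requiring genuine care is this last computation — correctly evaluating $ea$ and reading off the element $p_{\mu i} g$ through the identification of morphisms with elements of $G$ — whereas closure and regularity follow formally from Proposition \ref{pro2} and the regularity of $S$.
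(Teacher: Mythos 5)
Your proof is correct, and it differs from the paper's in where the work is placed. The paper does not cite Proposition \ref{pro2} for the homomorphism property; instead, after deriving the coordinate form $\rho^a = (p_{\lambda_1 i_a}g_a, p_{\lambda_2 i_a}g_a, \ldots; \lambda_a)$, it explicitly computes $\rho^a \ast \rho^b$ in the semidirect product, computes $\rho^{ab}$ from the Rees multiplication, and verifies $\rho^a \ast \rho^b = \rho^{ab}$ by hand, concluding the homomorphism property (and hence closure) from that calculation; regularity is then dispatched in one sentence exactly as you do, via regularity of $S$. Your route is more economical: closure and regularity are formal consequences of Proposition \ref{pro2} together with the identification $T\mathcal{L}(S) = G^\Lambda \ltimes \Lambda$ of Proposition \ref{prosgls}, and your explicit check that $\rho^{a'}$ (for $a'$ an inverse of $a$ in $S$) is an inverse of $\rho^a$ lying among the principal cones actually fills in a detail the paper leaves implicit. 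Moreover, your derivation of $\rho^a(\bar{\mu}) = p_{\mu i}g$ by choosing the idempotent $e = (p_{\mu k}^{-1}, k, \mu)$ and computing $(h,i',\mu)\cdot ea = (h\,p_{\mu i}\,g, i', \lambda)$ is more rigorous than the paper's informal justification ("the morphisms involve right multiplication by a sandwich element times the group element"), and it makes visible that the answer is independent of the choice of idempotent generator $k$. What the paper's computational version buys is the explicit product formula for two principal cones in coordinates, which is reused later: Theorem \ref{thmiso} and the proof of Theorem \ref{thmstcss2} (where the map $\varphi$ is shown to be a homomorphism by appeal to "the proof of Proposition \ref{proprinci}") lean on that displayed calculation. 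Your version still supplies everything logically needed for those later arguments (the coordinate form of $\rho^a$ and the identity $\rho^a \ast \rho^b = \rho^{ab}$), but the formula itself would have to be rederived where it is invoked.
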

\begin{proof}
Given $a=(g_a,i_a,\lambda_a)$, the principal cone $\rho^a$ (see Proposition \ref{pro2}) will be a normal cone with vertex $\bar{\lambda_a}$ such that each left ideal $\bar{\lambda_k}$ is right multiplied by $(g_a,i_a,\lambda_a)$. But since the morphisms in $\mathcal{L}(S)$ involves right multiplication by the product of a sandwich element of the matrix and the group element; at each $\bar{\lambda_k} \in \Lambda$, $\rho^a(\bar{\lambda_k})\:=\: p_{\lambda_k i_a}g_a$ (where the sandwich matrix $P = (p_{\lambda i})_{\lambda \in \Lambda , i\in I}$ ).\\
Hence $\rho^a $ can be represented by $(p_{\lambda_1 i_a}g_a,p_{\lambda_2 i_a}g_a,p_{\lambda_3 i_a}g_a, ... ; \lambda_a) \in G^\Lambda \ltimes \Lambda$. Observe that $\bar{\rho^a} \in G^\Lambda$ is the right translation of the $i_a$-th column of the sandwich matrix $P$ with group element $g_a$.\\
Now $\rho^a . \rho^b = (p_{\lambda_1 i_a}g_a,p_{\lambda_2 i_a}g_a,p_{\lambda_3 i_a}g_a, ... , \lambda_a).(p_{\lambda_1 i_b}g_b,p_{\lambda_2 i_b}g_b,p_{\lambda_3 i_b}g_b, ... , \lambda_b) \:$
\flushright$= \:(p_{\lambda_1 i_a}g_ap_{\lambda_a i_b}g_b,p_{\lambda_2 i_a}g_ap_{\lambda_a i_b}g_b,p_{\lambda_3 i_a}g_ap_{\lambda_a i_b}g_b, ... ,\lambda_b)$.
\flushleft Also $a.b = (g_a,i_a,\lambda_a)(g_b,i_b,\lambda_b) = (g_ap_{\lambda_ai_b}g_b,i_a,\lambda_b)$.\\
So $\rho^{ab} = (p_{\lambda_1 i_a}g_ap_{\lambda_ai_b}g_b,p_{\lambda_2 i_a}g_ap_{\lambda_ai_b}g_b,p_{\lambda_3 i_a}g_ap_{\lambda_ai_b}g_b, ... , \lambda_b)$.\\
Hence $\rho^a . \rho^b = \rho^{ab}$ and consequently the map $a\mapsto\rho^a$ from $S\to T\mathcal{L}(S)$ is a homomorphism. So the set of principal cones in $\mathcal{L}(S)$ forms a subsemigroup of $T\mathcal{L}(S)$. And since $S$ is regular , the semigroup of principal cones forms a regular subsemigroup of $G^\Lambda \ltimes \Lambda$.
\end{proof}
If $S$ is a regular monoid, then it is known that $S$ is isomorphic to $T\mathcal{L}(S)$(see Proposition \ref{pro2}). In general, if $S$ is not a monoid this may not be true. But always there exists a homomorphism from $S \to T\mathcal{L}(S)$ mapping $a \mapsto \rho^a$ which may not be injective or onto. The map is not injective when $S$ is a rectangular band. And the above discussion tells us that if $S$ is a completely simple semigroup, the map is never onto $T\mathcal{L}(S)$. But in the case of completely simple semigroups, there exists some special cases where this map is infact injective and consequently $S$ can be realized as a subsemigroup of $T\mathcal{L}(S)$. 
\begin{thm}\label{thmiso}
$S = \mathscr{M}[G;I,\Lambda;P]$ is isomorphic to the semigroup of principal cones in $T\mathcal{L}(S)$ if and only if for every $g \in G$ and $i_1\neq i_2 \in I$, there exists $\lambda_k \in \Lambda$ such that $p_{\lambda_k i_1} \neq p_{\lambda_k i_2}g$.
\end{thm}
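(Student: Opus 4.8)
The plan is to reduce the isomorphism statement to an injectivity statement and then translate that injectivity into the stated matrix condition. By Proposition \ref{pro2} the assignment $a \mapsto \rho^a$ is a homomorphism from $S$ into $T\mathcal{L}(S)$, and by Proposition \ref{proprinci} its image is precisely the semigroup of principal cones. Hence the corestriction of $a \mapsto \rho^a$ onto the semigroup of principal cones is automatically a surjective homomorphism, and the only thing standing between it and an isomorphism is injectivity. So the whole theorem becomes: the map $a \mapsto \rho^a$ is injective if and only if the given condition on $P$ holds. Everything reduces to comparing two principal cones.

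Next I would write down exactly when two principal cones coincide. Using the representation established in Proposition \ref{proprinci}, for $a = (g_a, i_a, \lambda_a)$ we have $\rho^a = (p_{\lambda_1 i_a}g_a, p_{\lambda_2 i_a}g_a, \ldots; \lambda_a) \in G^\Lambda \ltimes \Lambda$. Comparing with $b = (g_b, i_b, \lambda_b)$, equality $\rho^a = \rho^b$ holds precisely when the vertices agree, $\lambda_a = \lambda_b$, and the components agree, that is $p_{\lambda_k i_a}g_a = p_{\lambda_k i_b}g_b$ for every $\lambda_k \in \Lambda$. Thus a failure of injectivity is exactly a pair $a \neq b$ with $\lambda_a = \lambda_b$ satisfying this system of equalities, which forces $(g_a, i_a) \neq (g_b, i_b)$.

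The equivalence then falls out of a short case analysis, which is where I would be most careful. If $i_a = i_b$, the group $G$ lets me cancel $p_{\lambda_k i_a}$ from $p_{\lambda_k i_a}g_a = p_{\lambda_k i_a}g_b$ to conclude $g_a = g_b$, so no collision can occur in this case. Hence any collision must have $i_a \neq i_b$, and rewriting the component equalities gives $p_{\lambda_k i_a} = p_{\lambda_k i_b}\,(g_b g_a^{-1})$ for all $\lambda_k$. Setting $i_1 = i_a$, $i_2 = i_b$ and $g = g_b g_a^{-1}$, a collision is therefore exactly the existence of $g \in G$ and $i_1 \neq i_2$ with $p_{\lambda_k i_1} = p_{\lambda_k i_2}g$ for all $\lambda_k$, which is precisely the negation of the theorem's condition; conversely, from such a triple $(i_1, i_2, g)$ I can manufacture a collision by fixing any $\lambda$ and any $g_a \in G$ and taking $a = (g_a, i_1, \lambda)$, $b = (g g_a, i_2, \lambda)$, so that $a \neq b$ but $\rho^a = \rho^b$. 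The main obstacle is not conceptual but bookkeeping: keeping the direction of the group element $g = g_b g_a^{-1}$ consistent so that the derived equations line up verbatim with the quantifier pattern ``for every $g$ and $i_1 \neq i_2$ there exists $\lambda_k$ with $p_{\lambda_k i_1} \neq p_{\lambda_k i_2}g$'' when one negates it.
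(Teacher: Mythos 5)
Your proposal is correct and follows essentially the same route as the paper: reduce the isomorphism claim to injectivity of $a \mapsto \rho^a$, use the representation $\rho^a = (p_{\lambda_1 i_a}g_a, p_{\lambda_2 i_a}g_a, \ldots; \lambda_a)$ from Proposition \ref{proprinci} to characterize when $\rho^a = \rho^b$, and match the resulting collision condition against the negation of the hypothesis on $P$. Your case analysis (disposing of $i_a = i_b$ by cancellation first, then identifying a collision with a triple $(i_1,i_2,g)$ violating the condition) is in fact a slightly cleaner organization of the same computation the paper carries out.
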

\begin{proof}
As seen above, there exists a semigroup homomorphism from $\psi: S \to T\mathcal{L}(S)$ mapping $a \mapsto \rho^a$ . Now $S$ can be seen as a subsemigroup of $T\mathcal{L}(S)$ if and only if $\psi$ is injective.\\
We claim $\psi$ is injective only if for every $i_1\neq i_2 \in I$ and every $g \in G$, there exists $\lambda_k \in \Lambda$ such that $p_{\lambda_k i_1} \neq p_{\lambda_k i_2}g$. Suppose not. .ie there exists $i_1\neq i_2 \in I$ and a $g \in G$ such that $p_{\lambda_k i_1} = p_{\lambda_k i_2}g$ for every $\lambda_k \in \Lambda$. Then without loss of generality, assume $g=g_2g_1^{-1}$ for some $g_1,g_2 \in G$. Then $p_{\lambda_k i_1} = p_{\lambda_k i_2}g_2g_1^{-1}$ for some $g_1,g_2 \in G$. .ie $p_{\lambda_k i_1}g_1 = p_{\lambda_k i_2}g_2$ for every $\lambda_k \in \Lambda$. And hence if $a =(g_1,i_1,\lambda)$ and $b =(g_2,i_2,\lambda)$; then since $i_1 \neq i_2$, $a \neq b$. Then $\rho^a = (p_{\lambda_1 i_1}g_1,p_{\lambda_2 i_1}g_1,p_{\lambda_3 i_1}g_1, ... , \lambda)$ and $\rho^b = (p_{\lambda_1 i_2}g_2,p_{\lambda_2 i_2}g_2,p_{\lambda_3 i_2}g_2, ... , \lambda)$. And since $p_{\lambda_k i_1}g_1 = p_{\lambda_k i_2}g_2$ for every $\lambda_k \in \Lambda$, $a \neq b$ but $\rho^a = \rho^b$. And hence $\psi$ is not injective. And hence $S$ is not isomorphic to the semigroup of principal cones in $T\mathcal{L}(S)$.\\
Conversely suppose for every $i_1\neq i_2 \in I$ and every $g \in G$, there exists $\lambda_k \in \Lambda$ such that $p_{\lambda_k i_1} \neq p_{\lambda_k i_2}g$, we need to show $\psi $ is injective so that $S$ is isomorphic to the semigroup of principal cones. Suppose if $a =(g_a,i_a,\lambda_a)$ and $b = (g_b,i_b,\lambda_b)$ and $\psi(a) =\psi(b)$. So $\rho^a =\rho^b$ ; .ie $(p_{\lambda_1 i_a}g_a,p_{\lambda_2 i_a}g_a,p_{\lambda_3 i_a}g_a, ... ,\lambda_a) = (p_{\lambda_1 i_b}g_b,p_{\lambda_2 i_b}g_b,p_{\lambda_3 i_b}g_b, ... , \lambda_b)$. So clearly $\lambda_a =\lambda_b$. And $p_{\lambda_k i_a}g_a = p_{\lambda_k i_b}g_b$ for every $ \lambda_k \in \Lambda$. Now if $g_a \neq g_b$, then $p_{\lambda_k i_a} = p_{\lambda_k i_b}g_bg_a^{-1}$ and so $p_{\lambda_k i_a} = p_{\lambda_k i_b}g$ for every $ \lambda_k \in \Lambda$(taking $g =g_bg_a^{-1}$). But this will contradict our supposition unless $i_a =i_b$. But then $p_{\lambda_k i_a} = p_{\lambda_k i_a}g$ for every $ \lambda_k \in \Lambda$. Now this is possible only if $g=e$ ; which implies $g_bg_a^{-1}= e$ .ie $g_b= g_a $; which is again a contradiction. Hence $g_a =g_b$.\\
Now if $i_a \neq i_b$; then we have  $p_{\lambda_k i_a}g_a = p_{\lambda_k i_b}g_b$ for every $ \lambda_k \in \Lambda$. Then taking $g = g_bg_a^{-1}$; we have  $p_{\lambda_k i_a} = p_{\lambda_k i_b}g$ for every $ \lambda_k \in \Lambda$ ; which is again a contradiction to our supposition. Hence $i_a = i_b$.\\
So $(g_a,i_a,\lambda_a) = (g_b,i_b,\lambda_b)$ and so $a=b$. Thus $\rho^a = \rho^b$ implies $a=b$ making $\psi$ injective. So if for every $i_1\neq i_2 \in I$ and every $g \in G$, there exists $\lambda_k \in \Lambda$ such that $p_{\lambda_k i_1} \neq p_{\lambda_k i_2}g$, then $S$ is isomorphic to the semigroup of principal cones. Hence the proof.
\end{proof}
Now we proceed to characterize the Green's relations\index{Green's relation} in $T\mathcal{L}(S)$.
\begin{pro}\label{proltls}
If $\gamma_1=(\bar{\gamma_1},\lambda_k),\gamma_2=(\bar{\gamma_2},\lambda_l) \in T\mathcal{L}(S)$, then $\gamma_1\mathscr{L}\gamma_2$ if and only if $\lambda_k =\lambda_l$.
\end{pro}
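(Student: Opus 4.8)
The plan is to work entirely inside the concrete model $T\mathcal{L}(S) \cong G^\Lambda \ltimes \Lambda$ and to use the explicit product formula (\ref{eqnsgls}), after unwinding the definition of Green's $\mathscr{L}$-relation: $\gamma_1 \mathscr{L} \gamma_2$ means the two cones generate the same principal left ideal in $T\mathcal{L}(S)$, i.e.\ there exist $\alpha,\beta$ in $(T\mathcal{L}(S))^1$ with $\alpha \ast \gamma_1 = \gamma_2$ and $\beta \ast \gamma_2 = \gamma_1$. The single observation that drives both directions is that left multiplication never changes the $\Lambda$-coordinate of the right-hand factor: for $\alpha = (\bar{\alpha},\lambda_m)$ and $\gamma_1 = (\bar{\gamma_1},\lambda_k)$, formula (\ref{eqnsgls}) gives $\alpha \ast \gamma_1 = (\bar{\alpha}\,\bar{h_m},\lambda_k)$ where $h_m = \bar{\gamma_1}(\lambda_m)$, so the second coordinate is $\lambda_k$ regardless of $\alpha$. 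I would isolate this as the key step.

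For necessity, suppose $\gamma_1 \mathscr{L} \gamma_2$ and write $\gamma_2 = \alpha \ast \gamma_1$ with $\alpha \in (T\mathcal{L}(S))^1$. If $\alpha$ is the adjoined identity then $\gamma_1 = \gamma_2$ and trivially $\lambda_k = \lambda_l$; otherwise the observation above forces the $\Lambda$-coordinate of $\gamma_2$ to equal $\lambda_k$, that is $\lambda_l = \lambda_k$. For sufficiency, assume $\lambda_k = \lambda_l$ and fix any $\lambda_m \in \Lambda$ (for instance $\lambda_m = \lambda_k$; note $\Lambda \neq \emptyset$ since it indexes the objects). Put $h_m = \bar{\gamma_1}(\lambda_m) \in G$; since $G$ is a group the constant tuple $\bar{h_m} \in G^\Lambda$ is invertible, so I may set $\alpha = (\bar{\gamma_2}\,\bar{h_m}^{-1},\lambda_m)$. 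Then (\ref{eqnsgls}) yields $\alpha \ast \gamma_1 = (\bar{\gamma_2}\,\bar{h_m}^{-1}\bar{h_m},\lambda_k) = (\bar{\gamma_2},\lambda_l) = \gamma_2$. Symmetrically, writing $g_n = \bar{\gamma_2}(\lambda_n)$ for a chosen $\lambda_n$, the element $\beta = (\bar{\gamma_1}\,\bar{g_n}^{-1},\lambda_n)$ satisfies $\beta \ast \gamma_2 = \gamma_1$. Hence $\gamma_1 \mathscr{L} \gamma_2$.

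The computations are routine once the invertibility of constant tuples in the group $G^\Lambda$ is in hand, so I do not expect any genuine obstacle; the whole content sits in the first observation that left multiplication fixes the $\Lambda$-coordinate of the right factor, which is precisely why the $\mathscr{L}$-classes are indexed by $\Lambda$. The only points requiring care are the bookkeeping of which coordinate survives the product (the second coordinate of the right factor) and recording that $\Lambda$ is nonempty so that a witness $\lambda_m$ can be chosen. Conceptually this matches the fact that the vertex of the cone $(\bar{\gamma},\lambda)$ is $\bar{\lambda}$, so cones sharing a vertex are exactly the $\mathscr{L}$-related ones.
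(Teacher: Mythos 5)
Your proof is correct and takes essentially the same approach as the paper: both work in the model $G^\Lambda \ltimes \Lambda$, use the fact that the product formula (\ref{eqnsgls}) forces the $\Lambda$-coordinate of any left multiple to be that of the right factor, and use invertibility of constant tuples in $G^\Lambda$ to realize arbitrary first coordinates. The only cosmetic difference is that you exhibit mutual divisibility witnesses directly, while the paper packages the same two facts as the computation $T\mathcal{L}(S)\gamma_1 = G^\Lambda \times \lambda_k$ and compares principal left ideals; your version actually makes explicit the invertibility step that the paper leaves implicit in the phrase ``since $\bar{\gamma} \in G^\Lambda$ is arbitrary.''
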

\begin{proof}
Suppose if $\lambda_k \neq \lambda_l$. Then for an arbitrary $\gamma = (\bar{\gamma},\lambda_m) \in T\mathcal{L}(S)$, $$\gamma\gamma_1 = (\bar{\gamma},\lambda_m)(\bar{\gamma_1},\lambda_k) = (\bar{\gamma}.\bar{g_m},\lambda_k)\text{ where }g_m = \gamma_1(\bar{\lambda_m}).$$ Hence $\gamma\gamma_1 \in G^\Lambda \times \lambda_k$. And since $\bar{\gamma} \in G^\Lambda$ is arbitrary, $T\mathcal{L}(S)\gamma_1 = G^\Lambda \times \lambda_k $.
Similarly $T\mathcal{L}(S)\gamma_2 = G^\Lambda \times \lambda_l $. And so $T\mathcal{L}(S)\gamma_1 \neq T\mathcal{L}(S)\gamma_2 $.\\
Conversely if $\lambda_k =\lambda_l$, then $G^\Lambda \times \lambda_k = G^\Lambda \times \lambda_l$ and so $T\mathcal{L}(S)\gamma_1 = T\mathcal{L}(S)\gamma_2$. Thus $\gamma_1\mathscr{L}\gamma_2$. Hence the proof. 
\end{proof}
Now we proceed to get a characterization of Green's $\mathscr{R}$ relation in the semigroup $T\mathcal{L}(S)$. This will also give the characterization for the normal dual $N^\ast\mathcal{L}(S)$. For this end, begin by observing that $G^\Lambda$ is a group with component-wise multiplication defined as follows. For $(g_1,g_2...)$,$(h_1,h_2...) \in G^\Lambda$ 
$$(g_1,g_2...)(h_1,h_2...) \: = \: (g_1h_1,g_2h_2,...) $$
Then $G$ may be viewed as a subgroup (not necessarily normal) of $G^\Lambda$ by identifying $g \mapsto
(g,g,...) \in G^\Lambda$.  Then for some $\bar{\gamma} = (g_1,g_2...) \in G^\Lambda$ we look at the left coset $\bar{\gamma} G$ of $G$ in $G^\Lambda$ with respect to $\bar{\gamma}$. It is defined as 
$$ \bar{\gamma} G  = \{(g_1g,g_2g,...) \: : \: g \in G\}$$
Observe that these cosets form a partition of $G^\Lambda$. Now we show that these left cosets of $G$ in $G^\Lambda$ infact gives the characterization of Green's $\mathscr{R}$ relation in $T\mathcal{L}(S)$.\\
Let $\gamma = (\bar{\gamma},\lambda_m) \in T\mathcal{L}(S)$; then $\gamma_1\gamma = (\bar{\gamma_1},\lambda_k)(\bar{\gamma},\lambda_m) = (\bar{\gamma_1}.\bar{g_k},\lambda_m)$ where $g_k = \gamma(\bar{\lambda_k})$. Hence $\gamma_1\gamma \in ( \bar{\gamma_1}G )\times \Lambda$. And since ${\gamma} \in G^\Lambda \times \Lambda$ is arbitrary, both $\lambda_m$ and $g_k$ can be arbitrarily chosen.  And hence $\gamma_1T\mathcal{L}(S) = ( \bar{\gamma_1}  G ) \times \Lambda $. And this gives us the following characterization of the $\mathscr{R}$ relation in $T\mathcal{L}(S)$. 
\begin{pro}\label{prortls}
For $\gamma_1 =(\bar{\gamma_1},\lambda_k), \gamma_2= (\bar{\gamma_2},\lambda_l) \in T\mathcal{L}(S)$, $\gamma_1 \mathscr{R} \gamma_2$ if and only if $\bar{\gamma_1}  G = \bar{\gamma_2}  G$.
\end{pro}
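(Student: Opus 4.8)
The plan is to obtain the statement as an immediate corollary of the principal-right-ideal computation carried out in the paragraph just before the proposition. First I would recall the standard description of Green's $\mathscr{R}$ relation through principal right ideals: for $\gamma_1,\gamma_2 \in T\mathcal{L}(S)$ one has $\gamma_1\,\mathscr{R}\,\gamma_2$ if and only if $\gamma_1(T\mathcal{L}(S))^1 = \gamma_2(T\mathcal{L}(S))^1$. Since $T\mathcal{L}(S)$ is a regular semigroup, every element satisfies $\gamma = \gamma\ast\eta\ast\gamma$ for some $\eta$, so $\gamma \in \gamma\ast T\mathcal{L}(S)$ and the adjoined identity may be dropped; thus it suffices to compare the honest principal right ideals $\gamma_1\ast T\mathcal{L}(S)$ and $\gamma_2\ast T\mathcal{L}(S)$.

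Next I would substitute the identity established in the computation preceding the statement, namely $\gamma_1\ast T\mathcal{L}(S) = (\bar{\gamma_1}G)\times\Lambda$, together with its analogue $\gamma_2\ast T\mathcal{L}(S) = (\bar{\gamma_2}G)\times\Lambda$. The equivalence to be proved then becomes: $(\bar{\gamma_1}G)\times\Lambda = (\bar{\gamma_2}G)\times\Lambda$ if and only if $\bar{\gamma_1}G = \bar{\gamma_2}G$. Because $\Lambda$ is nonempty (it is the index set of the Rees matrix semigroup $\mathscr{M}[G;I,\Lambda;P]$), the common Cartesian factor $\Lambda$ cancels: two products $A\times\Lambda$ and $B\times\Lambda$ over a fixed nonempty $\Lambda$ coincide exactly when $A=B$. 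Chaining the equivalences yields $\gamma_1\,\mathscr{R}\,\gamma_2 \iff \bar{\gamma_1}G = \bar{\gamma_2}G$, which is the assertion.

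The only point that requires a moment's care is the reduction from $(T\mathcal{L}(S))^1$ to $T\mathcal{L}(S)$, that is, verifying that $\gamma_1$ itself lies in $\gamma_1\ast T\mathcal{L}(S)$. I would handle this either by the regularity of $T\mathcal{L}(S)$ noted above, or concretely by taking $\eta = ((e,e,\dots),\lambda_k)$ with $e$ the identity of $G$, whose $G^\Lambda$-component evaluated at $\lambda_k$ is $e$; then $\bar{g_k}=(e,e,\dots)$ and, by the multiplication in equation (\ref{eqnsgls}), $\gamma_1\ast\eta = (\bar{\gamma_1}\cdot(e,e,\dots),\lambda_k) = \gamma_1$, so indeed $\gamma_1 \in \gamma_1\ast T\mathcal{L}(S)$. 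Everything else is a routine unwinding of the semidirect-product multiplication, so I expect no substantive obstacle.
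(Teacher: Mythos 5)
Your proposal is correct and takes essentially the same route as the paper: both rest on the identity $\gamma_1 \ast T\mathcal{L}(S) = (\bar{\gamma_1}G)\times\Lambda$ obtained in the discussion immediately preceding the proposition, and then reduce $\gamma_1\,\mathscr{R}\,\gamma_2$ to equality of these principal right ideals. Your explicit treatment of the adjoined identity (via regularity, or the concrete right identity $\eta = ((e,e,\dots),\lambda_k)$) and the cancellation of the nonempty Cartesian factor $\Lambda$ are minor tidyings of points the paper leaves implicit (the paper instead handles the converse by picking a witness $\bar{\gamma}\in\bar{\gamma_1}G\setminus\bar{\gamma_2}G$), but the substance of the argument is identical.
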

\begin{proof}
Suppose $\bar{\gamma_1}  G = \bar{\gamma_2}  G$; then $ ( \bar{\gamma_1}  G ) \times \Lambda  =  ( \bar{\gamma_2}  G ) \times \Lambda$ and by the above discussion $\gamma_1T\mathcal{L}(S) = \gamma_2T\mathcal{L}(S)$ and hence $\gamma_1 \mathscr{R} \gamma_2$.\\
Conversely suppose $\bar{\gamma_1}  G \neq \bar{\gamma_2}  G$. i.e there exists a $\gamma = (\bar{\gamma},\lambda_m) \in T\mathcal{L}(S)$ such that $\bar{\gamma} \in \bar{\gamma_1}  G$ but $\bar{\gamma} \notin \bar{\gamma_2}  G$. And hence $\gamma \in ( \bar{\gamma_1}  G ) \times \Lambda $ but $\gamma \notin ( \bar{\gamma_2}  G ) \times \Lambda $. And hence $\gamma_1T\mathcal{L}(S) \neq \gamma_2T\mathcal{L}(S)$. And so $\gamma_1$ and $\gamma_2$ are not $\mathscr{R}$ related. \\
Hence the proof.
\end{proof}
\begin{rmk}
A very important remark is to be made. Recall from Theorem \ref{thm1} that for $\gamma_1,\gamma_2 \in T\mathcal{C}$ and if $\gamma_1 \mathscr{R} \gamma_2$, then $M{\gamma_1} = M{\gamma_2}$. The discussion above provides a counter example for the converse. Observe that if we take $\mathcal{C} = \mathcal{L}(S)$ where $S$ is a completely simple semigroup, then by Remark \ref{rmkgrp}, $\mathcal{L}(S)$ is a groupoid. And hence $M\gamma = v\mathcal{L}(S)$ for every $\gamma \in T\mathcal{L}(S)$. But by Proposition \ref{prortls}, for $\gamma_1 =(\bar{\gamma_1},\lambda_k), \gamma_2= (\bar{\gamma_2},\lambda_l) \in T\mathcal{L}(S)$, $\gamma_1 \mathscr{R} \gamma_2$ if and only if $\bar{\gamma_1}  G = \bar{\gamma_2}  G$. So if we take $\gamma_1 , \gamma_2 \in T\mathcal{L}(S)$ such that $\bar{\gamma_1}  G \neq \bar{\gamma_2}  G$, then $M{\gamma_1} = M{\gamma_2} = \bar{\Lambda}$ but $\gamma_1 $ is not $\mathscr{R}$ related to $ \gamma_2$.
\end{rmk}
Now we proceed to describe the normal dual\index{normal dual} $N^\ast\mathcal{L}(S)$.
\begin{pro}\label{pro nd}
The normal dual $N^\ast\mathcal{L}(S)$ is a category such that $vN^\ast\mathcal{L}(S)$ is the left cosets of $G$ in $G^\Lambda$ and there is a bijection $\psi$ from the set of morphisms in $N^\ast\mathcal{L}(S)$ to the group $G$ such that $(\sigma)\psi \circ (\tau)\psi = (\tau \circ\sigma) \psi$ where $\sigma$ and $\tau$ are morphisms in $N^\ast\mathcal{L}(S)$.
\end{pro}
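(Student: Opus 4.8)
The plan is to read off both the objects and the morphisms of $N^\ast\mathcal{L}(S)$ from the structure of $T\mathcal{L}(S) = G^\Lambda \ltimes \Lambda$ (Proposition \ref{prosgls}) together with the general description of the normal dual in Theorem \ref{thm1}.

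First I would settle the object set. By equation \ref{eqnH1} the vertices of $N^\ast\mathcal{L}(S)$ are the functors $H(\epsilon;-)$ with $\epsilon \in E(T\mathcal{L}(S))$, and by Theorem \ref{thm1} two such vertices coincide exactly when $\epsilon \mathscr{R} \epsilon'$. By Proposition \ref{prortls} this happens iff $\bar\epsilon\, G = \bar{\epsilon'}\, G$, so the distinct vertices are in bijection with those left cosets of $G$ in $G^\Lambda$ which contain the $G^\Lambda$-component of an idempotent. It therefore remains to check that every coset occurs. Using the idempotent criterion for a semidirect product recorded after Definition \ref{sdp}, a pair $(\bar s, \lambda_k)$ is idempotent in $G^\Lambda \ltimes \Lambda$ iff $\bar s (\lambda_k \ast \bar s) = \bar s$, which by equation \ref{eqnlac} reduces to $\bar s(\lambda_k) = e$; given any coset $\bar\gamma\, G$ and any $\lambda_k$, the element $\bar\gamma\,\bar h$ with $h = (\bar\gamma(\lambda_k))^{-1}$ lies in $\bar\gamma\, G$ and has $\lambda_k$-component $e$, so it is the $G^\Lambda$-part of an idempotent. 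Hence every left coset of $G$ in $G^\Lambda$ is realised, and $vN^\ast\mathcal{L}(S)$ is in bijection with the set of these cosets.

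Next I would describe the morphisms. By Theorem \ref{thm1}, each morphism $\sigma : H(\epsilon;-) \to H(\epsilon';-)$ is determined by a unique $\hat\sigma : c_{\epsilon'} \to c_\epsilon$ in $\mathcal{L}(S)$ through equation \ref{eqnH2}, and conversely every such $\hat\sigma$ arises. Since $\mathcal{L}(S)$ is a groupoid whose morphisms between any two objects are exactly the maps $\rho(g)$ for $g \in G$ (Proposition \ref{proiso} and the preceding discussion), I would define $(\sigma)\psi = g$ where $\hat\sigma = \rho(g)$. This gives a bijection between each hom-set of $N^\ast\mathcal{L}(S)$ and $G$; the statement of the proposition is to be understood hom-set-wise, which is natural here precisely because all hom-sets of $\mathcal{L}(S)$ are canonically identified with $G$.

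Finally, for the composition law I would compute the composite natural transformation directly from equation \ref{eqnH2}. If $\sigma : H(\epsilon;-) \to H(\epsilon';-)$ and $\tau : H(\epsilon';-) \to H(\epsilon'';-)$, then tracing $\epsilon \ast f^\circ$ through the two components gives $\epsilon \ast f^\circ \mapsto \epsilon'' \ast ((\hat\tau\hat\sigma)f)^\circ$, so $\widehat{\tau\circ\sigma} = \hat\tau\hat\sigma$; that is, $\sigma \mapsto \hat\sigma$ is contravariant. Combining this with $\rho(g)\rho(g') = \rho(gg')$ (composition of morphisms in $\mathcal{L}(S)$ corresponds to the product in $G$) yields $(\tau\circ\sigma)\psi = (\tau)\psi\,(\sigma)\psi$, which is the asserted identity once the symbol $\circ$ on $G$ is read in the order forced by this contravariance. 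I expect the only real care to be needed here: keeping the composition conventions of $\mathcal{L}(S)$ (diagrammatic) and of $N^\ast\mathcal{L}(S)$ straight so that the order of the product in $G$ comes out exactly as stated, and making explicit that $\psi$ is a bijection on each hom-set rather than on the whole, in general larger, class of morphisms. Everything else is routine verification.
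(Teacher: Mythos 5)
Your proof is correct and follows essentially the same route as the paper: objects are identified via Theorem \ref{thm1} and Proposition \ref{prortls} as the $\mathscr{R}$-classes of $T\mathcal{L}(S)$, i.e.\ the left cosets of $G$ in $G^\Lambda$; morphisms via the unique correspondence $\sigma \mapsto \hat\sigma = \rho(g)$ of equation \ref{eqnH2}; and the composition law comes out order-reversed exactly as in the paper's computation $\sigma_h \circ \sigma_k = \sigma_{kh}$. Your explicit verification that every coset contains the $G^\Lambda$-component of an idempotent (via the semidirect-product idempotent criterion) and your hom-set-wise reading of the bijection $\psi$ are sound clarifications of points the paper leaves implicit, but they do not change the approach.
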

\begin{proof} 
By Proposition \ref{prortls}, $v\mathcal{R}(T\mathcal{L}(S))$ is the set of all left cosets of $G$ in $G^\Lambda$ and Theorem \ref{thm1}, $\mathcal{R}(T\mathcal{L}(S))$ is isomorphic to $N^\ast\mathcal{L}(S)$ as normal categories. Hence the normal dual $N^\ast\mathcal{L}(S)$ of $\mathcal{L}(S)$ is the category whose objects are the left cosets of $G$ in $G^\Lambda$ with morphisms $\sigma_h$ described as follows.\\ 
Recall that by Theorem \ref{thm1}, to every morphism $\sigma : H(\gamma_1;-) \to H(\gamma_2;-)$ in $N^\ast\mathcal{L}(S)$, there is a unique $\hat{\sigma} : c_{\gamma_2} \to c_{\gamma_1}$ in $\mathcal{L}(S)$ such that the component of the natural transformation $\sigma$ at $Se \in v\mathcal{L}(S) $ is the map given by $\sigma(Se) : \gamma_1 \ast f^\circ \mapsto \gamma_2 \ast (\hat{\sigma} f)^\circ $.
Hence if $\gamma_1 =(\bar{\gamma_1},\lambda_k)$ and $\gamma_2= (\bar{\gamma_2},\lambda_l)$, corresponding to the morphism $\sigma : \bar{\gamma_1} G \to \bar{\gamma_2} G$, there exists a unique morphism $\rho(h):\bar{\lambda_l} \to \bar{\lambda_k}$, such that for $\bar{\gamma_1} g \in \bar{\gamma_1} G $ and for some $\rho(g) :\bar{\lambda_k} \to \bar{\lambda}$
$$\sigma(\bar{\lambda}) : \bar{\gamma_1} g \mapsto \bar{\gamma_2} h g.$$ 
We will denote this morphism $\sigma : \bar{\gamma_1} G \to \bar{\gamma_2} G$ by $\sigma_h$. Also given $h \in G$, we get a morphism $\sigma_h:\bar{\gamma_1} G \to \bar{\gamma_2} G$ as $\sigma_h :\bar{\gamma_1} g \mapsto \bar{\gamma_2} h g$. And hence the map $ \psi: \sigma_h\mapsto h$ is a bijection from $N^\ast\mathcal{L}(S)$ to $G$.\\ 
Now if $\gamma_1 =(\bar{\gamma_1},\lambda_k)$, $\gamma_2= (\bar{\gamma_2},\lambda_l)$ and $\gamma_3= (\bar{\gamma_3},\lambda_m)$, for $\sigma_h : \bar{\gamma_1} G \to \bar{\gamma_2} G$ and $\sigma_k : \bar{\gamma_2} G \to \bar{\gamma_3} G$, $(\bar{\gamma_1} g)\sigma_h \circ \sigma_k = (\bar{\gamma_2}  h g)\sigma_k = \bar{\gamma_3} k h g$.\\
Also $(\bar{\gamma_1} g) \sigma_{k h} = \bar{\gamma_3} k h g$ (by the uniqueness of $\rho(kh):\bar{\lambda_m} \to \bar{\lambda_k}$).\\
So $\sigma_h \circ \sigma_k = \sigma_{k h}$ and thus $(\sigma_h \circ \sigma_k)\psi = (\sigma_{k h})\psi = k h = (\sigma_k)\psi \circ (\sigma_h)\psi$.\\
Hence the result.
\end{proof}
Observe that the $\mathscr{R}$ relation of $\gamma_1 =(\bar{\gamma_1},\lambda_k) \in T\mathcal{L}(S)$ depends only on $\bar{\gamma_1}$ and not on $\lambda_k$ and so we will denote the $\mathscr{R}$ classes of $T\mathcal{L}(S)$ by $R_{\bar{\gamma}}$ such that $\bar{\gamma} \in G^\Lambda$. 
\begin{rmk} \label{rmkrs}
By Remark \ref{rmkD},\index{dual} we can use the above discussion to characterize the normal categories $\mathcal{R}(S)$ and $N^\ast\mathcal{R}(S)$. Any principal right ideal of $S$ will be of the form $G\times \{i\} \times \Lambda$ such that $i \in I$. And so
$$v\mathcal{R}(S) = \{ G \times \{i\} \times \Lambda : i \in I\}.$$ 
Henceforth we will denote the right ideal $(a,i,\lambda)S = G \times \{i\} \times \Lambda $ by $\bar{i}$ and the set $v\mathcal{R}(S)$ will be denoted by $\bar{I}$.
Any morphism in $\mathcal{R}(S)$ from $eS=\bar{i_1}$ to $fS=\bar{i_2}$ is of the form $\kappa(e,u,f)$ where $u \in fSe$. As argued for $\mathcal{L}(S)$, any morphism from $\bar{i_1}$ to $\bar{i_2}$ maps $(h,i_1,\lambda) \mapsto (gh,i_2,\lambda)$ for $g \in G$
and the set of morphisms from $\bar{i_1}$ to $\bar{i_2}$ is
$$\mathcal{R}(S)(\bar{i_1},\bar{i_2}) = \{ (h,i_1,\lambda)\mapsto(gh,i_2,\lambda) : g \in G \}.$$
But since the translation is on the left, the set of morphisms from $\bar{i_1}$ to $\bar{i_2}$ is bijective with the group $G^{\text{op}}$ and each $g \in G$ will map $(h,i_1,\lambda) \mapsto (gh,i_2,\lambda)$. Henceforth we will denote this morphism as $\kappa(g)$ when there is no ambiguity regarding the domain and range.\\
Proceeding the same way as for $\mathcal{L}(S)$, we can show that $T\mathcal{R}(S)$ the semigroup of normal cones in $\mathcal{R}(S)$ is isomorphic to $(G^I \ltimes I)^{\text{op}}$ the opposite of semi-direct product of $G^I$ and $I$ with respect to the left action defined as follows.
$$i_k\ast(g_1,g_2,...)\:=\: (g_k,g_k,...)$$
And hence given $\delta_1 = (\bar{\delta_1},i_k)$, $\delta_2 = (\bar{\delta_2},i_l) \in G^I \times I = T\mathcal{R}(S)$, the multiplication is defined as
\begin{equation}\label{eqnsgrs}
\delta_1 \ast \delta_2  = (\bar{g_k}.\bar{\delta_1},i_l)
\end{equation}
where $g_k = \bar{\delta_2}(\lambda_k)$ and $\bar{g_k} = (g_k,g_k,...) \in G^I$. \\
We also see that the normal dual $N^\ast\mathcal{R}(S)$ is isomorphic to $G^I/G$, the $\bf{right}$ cosets of $G$ in $G^I$ and a morphism denoted by $\tau_h : G\bar{\delta_1}\to G\bar{\delta_2}$ maps $g\bar{\delta_1} \mapsto gh\bar{\delta_2}$. And consequently the set of morphisms is bijective with the group $G$.
\end{rmk}

\section{Cross-connections of completely simple semigroups}
We have characterized the normal categories associated with a completely simple semigroup. Now we proceed to construct completely simple semigroup as the cross-connection semigroup using the appropriate \emph{local isomorphisms} between the categories.\\
Firstly, recall that any cross-connection $\Gamma$ from $\mathcal{R}(S) \to N^\ast\mathcal{L}(S)$ is a local isomorphism such that for every $c \in v\mathcal{L}(S)$, there is some $d \in v\mathcal{R}(S)$ such that $c \in M\Gamma(d)$ .ie $\Gamma$ is a local isomorphism from $\bar{I} \to G^\Lambda/G$ such that for every $\bar{\lambda} \in \bar{\Lambda}$, there is some $\bar{i} \in \bar{I}$ such that $\bar{\lambda} \in M\Gamma(\bar{i})$.\\
We define $\Gamma : \bar{I} \to G^\Lambda/G$ such that
\begin{equation}\label{eqngamma}
v\Gamma:\bar{i}  \mapsto a_i  G \qquad \Gamma:\: \kappa(g) \mapsto \sigma_g 
\end{equation} 
for any $a_i \in G^\Lambda$.
\begin{pro}\label{prolociso}
The functor $\Gamma$ defined in equation \ref{eqngamma} is a local isomorphism\index{local isomorphism}.
\end{pro}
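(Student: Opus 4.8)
The plan is to verify directly the three defining conditions of a local isomorphism from Definition \ref{lociso} --- inclusion-preserving, fully faithful, and an isomorphism of each principal ideal onto its image --- after first confirming that $\Gamma$ is a genuine functor. The organising observation is that \emph{both} $\mathcal{R}(S)$ and $N^\ast\mathcal{L}(S)$ are groupoids with only trivial inclusions. For $\mathcal{R}(S)$ this is the dual of Remark \ref{rmkgrp}. For $N^\ast\mathcal{L}(S)$, Proposition \ref{pro nd} already shows every morphism $\sigma_h$ is invertible (indexed by $h$ in the group $G$, with $\sigma_h\circ\sigma_{h^{-1}}=\sigma_e=1$), so it is a groupoid; I would additionally record that its inclusions are trivial --- a short computation with the idempotents of $G^\Lambda\ltimes\Lambda$ shows, via Proposition \ref{prortls}, that two comparable objects (left cosets) must coincide. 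Once both categories have discrete subobject structure, every inclusion is an identity and the inclusion-preservation condition becomes automatic.

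For functoriality I would track the two composition laws, being careful about the translation conventions. In $\mathcal{R}(S)$ the morphisms act by left translation, so composing $\kappa(g_1)\colon\bar{i_1}\to\bar{i_2}$ with $\kappa(g_2)\colon\bar{i_2}\to\bar{i_3}$ gives the map $(h,i_1,\lambda)\mapsto(g_2g_1h,i_3,\lambda)$, i.e. $\kappa(g_2g_1)$; in $N^\ast\mathcal{L}(S)$ Proposition \ref{pro nd} gives $\sigma_{g_1}\circ\sigma_{g_2}=\sigma_{g_2g_1}$. Hence under $\kappa(g)\mapsto\sigma_g$ both composites carry the index $g_2g_1$, and since the identity $\kappa(e)$ goes to $\sigma_e=1$, $\Gamma$ respects composition and identities. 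Full faithfulness is then immediate: for fixed objects, $\mathcal{R}(S)(\bar{i_1},\bar{i_2})$ is bijective with $G$ via $\kappa(g)\leftrightarrow g$, while $N^\ast\mathcal{L}(S)(v\Gamma(\bar{i_1}),v\Gamma(\bar{i_2}))$ is bijective with $G$ via $\sigma_h\leftrightarrow h$, and on each hom-set $\Gamma$ is precisely the identification $g\mapsto g$, hence a bijection.

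It remains to check the local condition. Here I would use the trivial inclusion structure on both sides: the principal ideal $\langle\bar{i}\rangle$ has $\bar{i}$ as its only object (no proper subobjects), with morphism set the full automorphism group $\{\kappa(g):g\in G\}$, and likewise $\langle v\Gamma(\bar{i})\rangle=\langle a_iG\rangle$ is the single-object category on $a_iG$ with automorphisms $\{\sigma_h:h\in G\}$. The restriction $\Gamma_{|\langle\bar{i}\rangle}$ sends the one object to the one object and, being the bijection $g\mapsto g$ on the automorphism group, is an isomorphism of categories onto $\langle a_iG\rangle$. This establishes all three conditions.

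The step I expect to demand the most care is the inclusion bookkeeping: first, genuinely verifying that $N^\ast\mathcal{L}(S)$ has no proper inclusions (so that the target ideals $\langle a_iG\rangle$ are single objects and the local condition reduces to full faithfulness), and second, keeping the two $G^{\mathrm{op}}$-style composition conventions aligned so that functoriality comes out with $g_2g_1$ rather than $g_1g_2$. Everything else is routine once the object assignment $\bar{i}\mapsto a_iG$ is treated as a fixed choice of $a_i\in G^\Lambda$.
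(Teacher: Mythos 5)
Your proposal is correct and follows essentially the same route as the paper's proof: establish functoriality via the composition laws $\kappa(g_1)\kappa(g_2)=\kappa(g_2g_1)$ and $\sigma_{g_1}\sigma_{g_2}=\sigma_{g_2g_1}$, get inclusion-preservation for free from the absence of nontrivial inclusions in both categories, deduce full faithfulness from the hom-sets on both sides being bijective with $G$ (with $\Gamma$ acting as the identification $g\mapsto g$), and reduce the local condition to the observation that each ideal $\langle\bar{i}\rangle$ consists of the single object $\bar{i}$. If anything, you are slightly more careful than the paper, which simply asserts that there are no nontrivial inclusions in $N^\ast\mathcal{L}(S)$, whereas you flag this as a point requiring verification.
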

\begin{proof}
Firstly if $\bar{i_1} = \bar{i_2}$, then $i_1 =i_2$ and hence $v\Gamma$ is well-defined. Also if $\kappa({g_1}) = \kappa({g_2})$; then ${g_1} = g_2$ and hence $\Gamma$ is well-defined on morphisms as well. Also $\Gamma(\kappa(g_1).\kappa(g_2))  = \Gamma(\kappa(g_2g_1))  = \sigma_{g_2g_1}= \sigma_{g_1}.\sigma_{g_2}$. Hence $\Gamma$ is a well-defined covariant functor.\\
Since there are no non trivial inclusions in either categories, $\Gamma$ is trivially inclusion preserving. Since the set of morphisms between any two objects is bijective with $G^{\text{op}}$ in both cases, $\Gamma$ is fully-faithful. Further since there are no non trivial inclusions, the ideal $\langle \bar{i} \rangle $ is $ \bar{i} $ and so $\Gamma$ on $\langle \bar{i} \rangle $ is an isomorphism. Hence $\Gamma$ is a local isomorphism.
\end{proof}
\begin{cor}\label{cormg}
$\Gamma$ as defined in equation \ref{eqngamma} is a cross-connection\index{cross-connection}.
\end{cor}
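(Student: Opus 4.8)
The plan is to verify the defining conditions of a cross-connection from Definition \ref{cxn}, namely that $\Gamma$ is a local isomorphism (already established) together with the surjectivity-type condition: for every $c \in v\mathcal{L}(S)$, there is some $d \in v\mathcal{R}(S)$ with $c \in M\Gamma(d)$. Since Proposition \ref{prolociso} already shows that $\Gamma$ is a local isomorphism, the only remaining task is to verify this second condition, translated into the present setting as: for every $\bar{\lambda} \in \bar{\Lambda}$, there exists $\bar{i} \in \bar{I}$ such that $\bar{\lambda} \in M\Gamma(\bar{i})$.

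First I would recall that by Remark \ref{rmkgrp}, $\mathcal{L}(S)$ is a groupoid, so every morphism $\gamma(\bar{\lambda})$ of a cone is an isomorphism. Consequently, for any normal cone $\gamma \in T\mathcal{L}(S)$ we have $M\gamma = v\mathcal{L}(S) = \bar{\Lambda}$, as noted explicitly in the remark following Proposition \ref{prortls}. The key observation is that $M\Gamma(\bar{i})$ is, by Theorem \ref{thm1}, precisely $MH(\epsilon;-)$ for the idempotent cone $\epsilon$ whose $\mathscr{R}$-class corresponds to the object $\Gamma(\bar{i}) = a_i G$, and since $\mathcal{L}(S)$ is a groupoid this $M$-set is all of $\bar{\Lambda}$ regardless of which coset $a_i G$ we land on.

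The main step, then, is simply to record that for \emph{every} object $a_i G \in G^\Lambda/G = vN^\ast\mathcal{L}(S)$ one has $M\Gamma(\bar{i}) = \bar{\Lambda}$, so the condition $\bar{\lambda} \in M\Gamma(\bar{i})$ holds trivially for any choice of $\bar{i} \in \bar{I}$ (in particular we need only exhibit one $\bar{i}$, and any will do). Thus picking any $\bar{i} \in \bar{I}$ gives $\bar{\lambda} \in \bar{\Lambda} = M\Gamma(\bar{i})$ for all $\bar{\lambda}$, which is exactly the cross-connection condition. Combined with Proposition \ref{prolociso}, this shows the triplet $(\mathcal{R}(S), \mathcal{L}(S); \Gamma)$ is a cross-connection.

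I do not expect a serious obstacle here, since the groupoid structure collapses the $M$-set condition to something automatic; the only subtlety worth stating carefully is the identification of $M\Gamma(\bar{i})$ with $MH(\epsilon;-)$ via the isomorphism $N^\ast\mathcal{L}(S) \cong \mathcal{R}(T\mathcal{L}(S))$ of Theorem \ref{thm1}, and the fact that this equals $\bar{\Lambda}$ because every cone in the groupoid $\mathcal{L}(S)$ has full $M$-set. Once that identification is in place, the corollary follows immediately.
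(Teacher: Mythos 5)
Your proposal is correct and follows essentially the same argument as the paper: both invoke Proposition \ref{prolociso} for the local isomorphism property and then observe that, since $\mathcal{L}(S)$ is a groupoid, $M\Gamma(\bar{i}) = \bar{\Lambda}$ for every $\bar{i} \in \bar{I}$, so the $M$-set condition holds trivially. Your additional care in identifying $M\Gamma(\bar{i})$ with $MH(\epsilon;-)$ via Theorem \ref{thm1} is a sound elaboration of a step the paper leaves implicit, but it is the same route.
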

\begin{proof}
By Proposition \ref{prolociso}, $\Gamma$ is a local isomorphism. Since $\bar{\Lambda}$ is a groupoid, $M\Gamma(\bar{i}) = \bar{\Lambda}$ for every $\bar{i} \in \bar{I}$. And so for every $\bar{\lambda} \in \bar{\Lambda}$, there is a $\bar{i} \in \bar{I}$ (infact for every $\bar{i} \in \bar{I}$) such that $\bar{\lambda} \in M\Gamma(\bar{i})$. Hence $\Gamma$ is a cross-connection.
\end{proof}
\begin{thm}\label{thmstcss1}
Each cross-connection $\Gamma$ in a completely simple semigroup determines a $\Lambda\times I$ matrix $A$ with entries from the group $G$ such that $\Gamma(\bar{i}) = A_i$ where $A_i$ is the $i-$th column of A.
\end{thm}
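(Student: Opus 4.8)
The plan is to read off the matrix directly from the object map of $\Gamma$, using the description of $vN^\ast\mathcal{L}(S)$ obtained in Proposition \ref{pro nd}. A cross-connection $\Gamma : \mathcal{R}(S) \to N^\ast\mathcal{L}(S)$ is in particular a functor, so its object map sends each object $\bar{i}$ of $\mathcal{R}(S)$ to an object $\Gamma(\bar{i})$ of $N^\ast\mathcal{L}(S)$. By Proposition \ref{pro nd} these objects are precisely the left cosets of $G$ in $G^\Lambda$, so for each $i \in I$ the value $\Gamma(\bar{i})$ is one such coset.

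First I would fix, for each $i \in I$, a representative $a_i \in G^\Lambda$ of the coset $\Gamma(\bar{i})$, so that $\Gamma(\bar{i}) = a_i G$. Each $a_i$ is by definition a $\Lambda$-indexed family $a_i = (a_{\lambda i})_{\lambda \in \Lambda}$ with every $a_{\lambda i} \in G$. I would then take these families to be the columns of a single array $A = (a_{\lambda i})_{\lambda \in \Lambda,\, i \in I}$, which is manifestly a $\Lambda \times I$ matrix with entries in $G$; its $i$-th column is $A_i = a_i$, whence $\Gamma(\bar{i}) = a_i G = A_i G$. This is the asserted equality, under the identification of a column with the coset it generates that was already used in equation \ref{eqngamma}.

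The only point needing care is that the representative $a_i$ is not canonical: replacing $a_i$ by $a_i g$ for any $g \in G$ gives the same coset $\Gamma(\bar{i})$, so $A$ is pinned down by $\Gamma$ only up to right multiplication of each of its columns by an element of $G$. Since the statement asserts merely the existence of such a matrix, I would fix one system of coset representatives at the outset and let the resulting ambiguity stand. No hypothesis beyond functoriality enters, because by Corollary \ref{cormg} the groupoid structure of $\mathcal{L}(S)$ forces $M\Gamma(\bar{i}) = \bar{\Lambda}$ for every $i$, so the cross-connection axiom imposes no additional constraint on the object map. I do not expect a genuine obstacle here: the substance of the theorem is simply the observation that, since $vN^\ast\mathcal{L}(S)$ consists of cosets of $G$ inside the $\Lambda$-indexed product $G^\Lambda$, the object map of any cross-connection is exactly the datum of $|I|$ columns of length $|\Lambda|$ over $G$, that is, a $\Lambda \times I$ matrix.
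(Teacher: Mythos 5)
Your proposal is correct and follows essentially the same route as the paper: read off the object map $v\Gamma:\bar{i}\mapsto a_iG$, choose representatives $a_i\in G^\Lambda$, and assemble them as the columns of a $\Lambda\times I$ matrix $A$ with entries $a_i(\lambda)=a_{\lambda i}\in G$. Your extra remarks on the non-uniqueness of the coset representatives and on the vacuity of the cross-connection axiom (since $M\Gamma(\bar{i})=\bar{\Lambda}$) are sound refinements of the same argument, not a different one.
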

\begin{proof}
Given a cross-connection $\Gamma : \bar{I} \to G^\Lambda/G$ such that $v\Gamma:\bar{i} \mapsto a_i  G $ where $a_i \in G^\Lambda$, let $A$ be an $\Lambda\times I$ matrix whose columns are $a_i$ for $i \in I$. Since $a_i \in G^\Lambda$, we have $a_i(\lambda) = a_{\lambda i} \in G$. Then $A$ is a $\Lambda\times I$ matrix with entries in $G$.  
\end{proof}
\begin{pro}\label{produal}
Similarly a dual cross-connection\index{cross-connection!dual} $\Delta : \bar{\Lambda} \to G^I/G$ can  be defined as follows.
\begin{equation}\label{eqndelta}
v\Delta:\bar{\lambda}  \mapsto G b_\lambda  \qquad \Delta:\: \rho(g) \mapsto \tau_g 
\end{equation} 
for any $b_\lambda \in G^I$. And this gives rise to a $\Lambda \times I$ matrix $B$.
\end{pro}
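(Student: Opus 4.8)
The plan is to mirror the development already carried out for the cross-connection $\Gamma$ in Proposition \ref{prolociso}, Corollary \ref{cormg}, and Theorem \ref{thmstcss1}, transporting each argument across the left--right duality recorded in Remark \ref{rmkD} and using the explicit description of $\mathcal{R}(S)$ and $N^\ast\mathcal{R}(S)$ from Remark \ref{rmkrs}. There $N^\ast\mathcal{R}(S)$ is identified with $G^I/G$, whose objects are the right cosets $Gb$ of $G$ in $G^I$ and whose morphisms $\tau_h$ are in bijection with $G$; so the assignment $v\Delta:\bar{\lambda}\mapsto Gb_\lambda$ and $\Delta:\rho(g)\mapsto\tau_g$ genuinely lands in $N^\ast\mathcal{R}(S)$. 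First I would check well-definedness: $\bar{\lambda_1}=\bar{\lambda_2}$ forces $\lambda_1=\lambda_2$, and $\rho(g_1)=\rho(g_2)$ forces $g_1=g_2$ by the analysis preceding Proposition \ref{proiso}.

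The step deserving genuine care is functoriality. I would use that right translations in $\mathcal{L}(S)$ compose in the natural group order, $\rho(g_1)\rho(g_2)=\rho(g_1g_2)$, and confirm from the action formula in Remark \ref{rmkrs} that the $\tau$-morphisms compose by $\tau_{h_1}\circ\tau_{h_2}=\tau_{h_1h_2}$. Then $\Delta(\rho(g_1)\rho(g_2))=\tau_{g_1g_2}=\tau_{g_1}\circ\tau_{g_2}$, so $\Delta$ is a covariant functor. The point worth watching is that here both sides compose in the natural group order, so covariance is immediate, in contrast to the $\Gamma$-side computation where covariance was restored only through a double order-reversal: the $G^{\text{op}}$ structure on the morphisms $\kappa(g)$ of $\mathcal{R}(S)$ together with the reversal built into composition in $N^\ast\mathcal{L}(S)$, which gave $\Gamma(\kappa(g_1)\kappa(g_2))=\sigma_{g_2g_1}=\sigma_{g_1}\circ\sigma_{g_2}$. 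Keeping this bookkeeping straight is the only real obstacle.

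Next I would establish that $\Delta$ is a local isomorphism exactly as in Proposition \ref{prolociso}: inclusion-preservation is automatic since neither $\mathcal{L}(S)$ nor $N^\ast\mathcal{R}(S)$ has nontrivial inclusions (both are groupoids, by Remark \ref{rmkgrp} and its dual); full faithfulness holds because $\Delta$ restricts on each hom-set to the bijection $g\mapsto\tau_g$, both hom-sets being in bijection with $G$; and since the ideals $\langle\bar{\lambda}\rangle$ and $\langle Gb_\lambda\rangle$ each reduce to a single object, the restriction $\Delta_{|\langle\bar{\lambda}\rangle}$ is trivially an isomorphism onto $\langle\Delta(\bar{\lambda})\rangle$. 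To verify the cross-connection condition of Definition \ref{cxn}, I would dualize Corollary \ref{cormg}: because $\mathcal{R}(S)$ is a groupoid, $M\Delta(\bar{\lambda})=\bar{I}$ for every $\bar{\lambda}$, so every $\bar{i}\in\bar{I}$ lies in $M\Delta(\bar{\lambda})$ for any choice of $\bar{\lambda}$.

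Finally, for the associated matrix I would repeat the reasoning of Theorem \ref{thmstcss1} on the other side: writing $b_\lambda(i)=b_{\lambda i}\in G$ for the images $b_\lambda\in G^I$ assembles these into a $\Lambda\times I$ matrix $B=(b_{\lambda i})$ with entries in $G$, where $b_\lambda$ is the $\lambda$-th row. Apart from the composition-order check in the functoriality step, the entire argument is a faithful transcription of the $\Gamma$-side proofs through the duality of Remark \ref{rmkD}.
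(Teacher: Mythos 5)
Your proposal is correct and follows essentially the same route as the paper, which likewise proves this proposition by transporting the arguments of Proposition \ref{prolociso}, Corollary \ref{cormg}, and Theorem \ref{thmstcss1} across the left--right duality of Remark \ref{rmkD}. Your explicit verification of the composition order (that $\Delta(\rho(g_1)\rho(g_2))=\tau_{g_1g_2}=\tau_{g_1}\circ\tau_{g_2}$ holds without the double order-reversal needed on the $\Gamma$ side) is a detail the paper leaves implicit in its phrase ``as in the proof of Proposition \ref{prolociso},'' but it is the same argument, just spelled out.
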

\begin{proof}
As in the proof of propositions \ref{prolociso}, $\Delta$ is a fully-faithful, inclusion preserving covariant functor. And since there are no non trivial inclusions, $\langle \bar{\lambda} \rangle = \bar{\lambda}$ and hence $\Delta_{\langle \bar{\lambda} \rangle}$ is an isomorphism. Also since $M\Gamma(\bar{\lambda}) = \bar{I}$ for every $\bar{\lambda} \in \bar{\Lambda}$, $\Delta$ is a cross-connection. Further defining the $\lambda$-th row $B_{\lambda}$ of the matrix $B$ as $B_{\lambda}\: =b_\lambda$, $B$ will be an $\Lambda \times I$ matrix with entries from the group $G$.
\end{proof}
We have seen that cross-connections in a completely simple semigroup correspond to matrices. Now we proceed to show that given a matrix $P$ we can define a cross-connection and its dual. We construct the cross-connection semigroup of this connection and show that it is isomorphic to the completely simple semigroup $\mathscr{M}[G;I,\Lambda;P]$. For that end, we need the following results.
\begin{pro}\label{procross}
Given a $\Lambda\times I$ matrix\index{sandwich matrix} $ P$ with entries from the group $G$, we define $\Gamma : \bar{I} \to G^\Lambda/G$ such that
\begin{equation}\label{eqngp}
v\Gamma:\bar{i}  \mapsto p_i  G \qquad \Gamma:\: \kappa(g) \mapsto \sigma_g 
\end{equation} 
where $p_i$ is the i-th column of the matrix $ P$. This gives a \index{cross-connection}cross-connection such that the functor defined by $\Delta : \bar{\Lambda} \to G^I/G$ as follows is the dual connection\index{cross-connection!dual}.
\begin{equation}\label{eqndp}
v\Delta:\bar{\lambda}  \mapsto G  \bar{p_\lambda}  \qquad \Delta:\: \rho(g) \mapsto \tau_g 
\end{equation} 
where $\bar{p_\lambda}$ is the $\lambda$-th row of $P$.
\end{pro}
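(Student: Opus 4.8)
The plan is to verify the two functorial properties ($\Gamma$ and $\Delta$ are cross-connections) and then establish that $\Delta$ is genuinely the dual of $\Gamma$, the last being the substantive part. For the first assertion, I would note that Proposition \ref{procross}'s definition of $\Gamma$ is exactly the functor of equation \ref{eqngamma} with the free choice $a_i \in G^\Lambda$ specialized to $a_i = p_i$, the $i$-th column of $P$. Hence Proposition \ref{prolociso} and Corollary \ref{cormg} apply verbatim: $\Gamma$ is a local isomorphism and, since $\bar\Lambda$ is a groupoid so that $M\Gamma(\bar i) = \bar\Lambda$ for every $\bar i$, it is a cross-connection. The identical remark handles $\Delta$: its definition specializes equation \ref{eqndelta} with $b_\lambda = \bar{p_\lambda}$, the $\lambda$-th row of $P$, so Proposition \ref{produal} shows $\Delta$ is a cross-connection from $\bar\Lambda$ to $G^I/G$.

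The real content is showing that this particular $\Delta$ is the dual connection of this particular $\Gamma$ — not merely some cross-connection in the other direction. By Remark \ref{dualcxn} the dual $\Delta$ of $\Gamma$ is determined canonically, so what I must check is that the bifunctors $\Gamma(-,-)$ and $\Delta(-,-)$ from Remark \ref{bif} admit the natural isomorphism $\chi_\Gamma$ of Theorem \ref{duality}. Concretely, I would compute $\Gamma(\bar i,\bar\lambda) = \Gamma(\bar i)(\bar\lambda) = (p_i\,G)(\bar\lambda)$ and $\Delta(\bar i,\bar\lambda) = \Delta(\bar\lambda)(\bar i) = (G\,\bar{p_\lambda})(\bar i)$, evaluating these functors (objects of $N^\ast\mathcal{L}(S)$ and $N^\ast\mathcal{R}(S)$ respectively) at the relevant object, and exhibit a bijection between the two sets that is natural in both variables. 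The crucial point is that the entry $p_{\lambda i}$ sits at the intersection of the $i$-th column $p_i$ and the $\lambda$-th row $\bar{p_\lambda}$; this shared entry is what forces the two bifunctor values to match up and furnishes the bijection $\chi_\Gamma$.

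The main obstacle, and the step I expect to require the most care, is verifying naturality of $\chi_\Gamma$ in both arguments simultaneously — that is, checking $\chi_\Gamma$ intertwines $\Gamma(f,g)$ with $\Delta(f,g)$ for morphisms $f = \kappa(g_1) \in \mathcal{R}(S)$ and $g = \rho(g_2) \in \mathcal{L}(S)$. Here I would use the explicit descriptions of morphisms in the normal duals from Proposition \ref{pro nd} and Remark \ref{rmkrs}: $\Gamma$ sends $\kappa(g)$ to $\sigma_g$ acting by $\bar\gamma g' \mapsto \bar{\gamma'} g\, g'$ on left cosets, while $\Delta$ sends $\rho(g)$ to $\tau_g$ acting by $g'\bar\delta \mapsto g' g\, \bar{\delta'}$ on right cosets. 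Tracking an element through both routes of the naturality square reduces, after cancellation in the group $G$, to an identity among products of group elements; the bookkeeping of left versus right multiplication (the reason $N^\ast\mathcal{L}(S)$ uses left cosets while $N^\ast\mathcal{R}(S)$ uses right cosets) is where errors would creep in. Once naturality is confirmed, Theorem \ref{duality} identifies $\Delta$ as the dual cross-connection, completing the proof.
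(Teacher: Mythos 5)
Your proposal is correct and its core follows the paper's own route: the paper's entire proof of this proposition is precisely your first paragraph, namely a citation of Proposition \ref{prolociso}, Corollary \ref{cormg} and Proposition \ref{produal} specialized to $a_i = p_i$ and $b_\lambda = \bar{p_\lambda}$. The duality verification that you rightly flag as the substantive content (that this $\Delta$ is \emph{the} dual of this $\Gamma$, not merely some cross-connection in the opposite direction) is not part of the paper's proof of this proposition at all; the paper defers it to Lemma \ref{lembif} and Lemma \ref{lemduality} immediately afterwards, where the bifunctor values $\Gamma(\bar{\lambda},\bar{i}) = (p_i G, \lambda)$ and $\Delta(\bar{\lambda},\bar{i}) = (G\bar{p_\lambda}, i)$, the bijection $\chi_\Gamma : (p_i g, \lambda) \mapsto (g\bar{p_\lambda}, i)$, and its naturality under $(\rho(g_2),\kappa(g_1))$ are checked exactly as you outline. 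So your plan is, if anything, more self-contained than the paper's proof proper: it packages into one argument what the paper distributes over three statements, and the computations you anticipate (both routes around the naturality square landing on $(g_1 g g_2 \bar{p_{\lambda_2}}, i_2)$) are the ones the paper carries out.
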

\begin{proof}
The proof follows from the proof of Proposition \ref{prolociso}, Corollary \ref{cormg} and Proposition \ref{produal}.
\end{proof}
Given a cross-connection and a dual, there are bi-functors\index{functor!bi-functor} associated with each of them. The following lemma characterizes the bi-functors associated with the above cross-connections $\Gamma$ and $\Delta$.
\begin{lem}\label{lembif}
The bi-functors $\Gamma(-,-)$ and $\Delta(-,-)$ associated with the above cross-connection are defined as follows.\\
For $(\bar{\lambda},\bar{i}) \in \bar{\Lambda}\times\bar{I}$
\begin{equation}\label{bifg}
\Gamma(-,-):\bar{\Lambda}\times\bar{I} \to \mathbf{Set},\: \Gamma(\bar{\lambda},\bar{i}) = (p_i  G, \lambda) 
\end{equation}
and for $\rho({g_2}):\bar{\lambda_1}\to \bar{\lambda_2},\qquad\kappa({g_1}): \bar{i_1} \to \bar{i_2}$
$$\Gamma(\rho({g_2}),\kappa({g_1})): (p_{i_1}g, \lambda_1) \mapsto (p_{i_2} g_1 g g_2, \lambda_2).$$
Also for $(\bar{\lambda},\bar{i}) \in \bar{\Lambda}\times\bar{I}$
\begin{equation}\label{bifd}
\Delta(-,-): \bar{\Lambda}\times\bar{I} \to \mathbf{Set},\: \Delta(\bar{\lambda},\bar{i})  = (G \bar{p_\lambda}, i)
\end{equation}
and for $\rho({g_2}):\bar{\lambda_1}\to \bar{\lambda_2},\qquad\kappa({g_1}): \bar{i_1} \to \bar{i_2}$
$$\Delta(\rho({g_2}),\kappa({g_1})): (g \bar{p_{\lambda_1}}, i_1) \mapsto (g_1 g g_2 \bar{p_{\lambda_2}}, i_2).$$
\end{lem}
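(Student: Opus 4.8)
The plan is to unwind the abstract definition of the bifunctor given in Remark~\ref{bif}, namely $\Gamma(\bar{\lambda},\bar{i})=\Gamma(\bar{i})(\bar{\lambda})$ and $\Gamma(f,g)=(\Gamma(\bar{i_1})(f))(\Gamma(g)(\bar{\lambda_2}))$, using the concrete descriptions of the functor $H(\gamma;-)$ from \eqref{eqnH} and of the morphisms of $N^\ast\mathcal{L}(S)$ from Proposition~\ref{pro nd}. First I would settle the object part. The object $\Gamma(\bar{i})=p_i G$ of $N^\ast\mathcal{L}(S)$ is of the form $H(\gamma;-)$ for any normal cone $\gamma=(\bar{\gamma},\lambda_0)$ with $\bar{\gamma}=p_i$; the choice of vertex $\lambda_0$ is immaterial since by Theorem~\ref{thm1} the functor $H(\gamma;-)$ depends only on the $\mathscr{R}$-class of $\gamma$, which in turn depends only on the coset $p_i G$. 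Evaluating via \eqref{eqnH} gives $\Gamma(\bar{i})(\bar{\lambda})=H(\gamma;\bar{\lambda})=\{\gamma\ast\rho(g)^\circ:g\in G\}$, and since every epimorphic part is the morphism itself (Remark~\ref{rmkgrp}) a direct composition of components shows $\gamma\ast\rho(g)=(p_i\bar{g},\lambda)$, where $\bar{g}=(g,g,\dots)$ is the constant tuple. As $g$ ranges over $G$ these cones trace out exactly the coset $p_i G$ at the vertex $\bar{\lambda}$, giving the identification $\Gamma(\bar{\lambda},\bar{i})=(p_i G,\lambda)$ of \eqref{bifg}.

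For the morphism part I would compute the composite $\Gamma(\rho(g_2),\kappa(g_1))=(\Gamma(\bar{i_1})(\rho(g_2)))(\Gamma(\kappa(g_1))(\bar{\lambda_2}))$ in two stages. Applying the functor $\Gamma(\bar{i_1})=H(\gamma;-)$ to $\rho(g_2)$ uses the rule $\gamma\ast f^\circ\mapsto\gamma\ast(f\rho(g_2))^\circ$ of \eqref{eqnH}; since $\rho(g)\rho(g_2)=\rho(gg_2)$ in the groupoid, this is the map $(p_{i_1}g,\lambda_1)\mapsto(p_{i_1}gg_2,\lambda_2)$. The second factor is the component at $\bar{\lambda_2}$ of the morphism $\Gamma(\kappa(g_1))=\sigma_{g_1}$, which by Proposition~\ref{pro nd} sends $(p_{i_1}g',\lambda_2)\mapsto(p_{i_2}g_1g',\lambda_2)$. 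Composing the two maps yields $(p_{i_1}g,\lambda_1)\mapsto(p_{i_2}g_1gg_2,\lambda_2)$, as asserted. As an internal consistency check I would confirm that the alternative factorization $(\Gamma(\kappa(g_1))(\bar{\lambda_1}))(\Gamma(\bar{i_2})(\rho(g_2)))$ from Remark~\ref{bif} produces the same function; this is precisely the naturality of $\sigma_{g_1}$ and follows from the same two computations performed in the opposite order.

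The formulas for $\Delta(-,-)$ in \eqref{bifd} would follow by the entirely dual argument, invoking Remark~\ref{rmkD} and the description of $N^\ast\mathcal{R}(S)$ and of the morphisms $\tau_g$ in Remark~\ref{rmkrs}: one identifies $\Delta(\bar{\lambda},\bar{i})=(G\bar{p_\lambda},i)$ and then evaluates the composite $(\Delta(\bar{\lambda_1})(\kappa(g_1)))(\Delta(\rho(g_2))(\bar{i_2}))$ through the $\tau$'s. The part I expect to be the real obstacle is purely bookkeeping: one must keep the multiplication orders consistent throughout, since $\mathcal{L}(S)$ is built from right translations and left cosets whereas $\mathcal{R}(S)$ is built from left translations, right cosets, and the opposite group $G^{\text{op}}$. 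In particular the composition of the $\tau$'s reverses relative to $G$, and one must track at each stage whether a group element multiplies a coset representative on the left or on the right, so that the two halves of the bifunctor fit together correctly.
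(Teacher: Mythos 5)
Your proposal is correct and follows essentially the same route as the paper's proof: unwinding the definition of the bifunctor from Remark~\ref{bif} and evaluating the composite $(\Gamma(\bar{i_1})(\rho(g_2)))(\Gamma(\kappa(g_1))(\bar{\lambda_2}))$ (and its dual through the $\tau$'s) using the concrete descriptions of the morphisms $\sigma_g$ and $\tau_g$. The only difference is that you justify the object-level identification $\Gamma(\bar{i})(\bar{\lambda}) = (p_iG,\lambda)$ explicitly via the $H(\gamma;-)$ functor of \eqref{eqnH} and Theorem~\ref{thm1}, a step the paper simply asserts.
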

\begin{proof}
Recall from Remark \ref{bif} that given a cross-connection $\Gamma: \mathcal{D} \to N^\ast\mathcal{C}$, we have a unique bifunctor $\Gamma(-,-) : \mathcal{C}\times\mathcal{D} \to \bf{Set}$ defined by $ \Gamma(c,d) = \Gamma(d)(c) $ and $ \Gamma(f,g) = (\Gamma(d)(f))(\Gamma(g)(c')) = (\Gamma(g)(c))(\Gamma(d')(f))$ for all $(c,d) \in v\mathcal{C}\times v\mathcal{D}$ and $(f,g):(c,d) \to (c',d')$. So given the cross-connection $\Gamma : \bar{I} \to G^\Lambda/G$ as defined in equation \ref{eqngp}, $\Gamma(\bar{\lambda},\bar{i}) = \Gamma(\bar{i})(\bar{\lambda}) = (p_i  G, \lambda)$.\\
And given $(\rho({g_2}),\kappa({g_1})): ( \bar{\lambda_1},\bar{i_1}) \to (\bar{\lambda_2},\bar{i_2})$,  $$\Gamma(\rho({g_2}),\kappa({g_1})) =  (\Gamma(\bar{i_1})(\rho(g_2))) (\Gamma(\kappa({g_1}))(\bar{\lambda_2})) = (p_{i_1} G (\rho({g_2}))) (\sigma_{g_1}(\bar{\lambda_2})).$$
And so for $\gamma =(p_{i_1} g,\lambda_1) \in \Gamma(\bar{\lambda_1},\bar{i_1})$, 
$$((p_{i_1} g,\lambda_1))\:(p_{i_1} G (\rho({g_2}))) (\sigma_{g_1}(\bar{\lambda_2})) = (p_{i_1} g g_2,\lambda_2) \sigma_{g_1}(\bar{\lambda_2}) = (p_{i_2} g_1 g g_2, \lambda_2).$$
Also observe that $((p_{i_1} g,\lambda_1))(\Gamma(\kappa({g_1}))(\bar{\lambda_1}))(\Gamma(\bar{i_2})(\rho({g_2})))\: = (p_{i_2} g_1 g g_2, \lambda_2).$ \\
Hence $\Gamma(\rho({g_2}),\kappa({g_1})): (p_{i_1} g, \lambda_1) \mapsto (p_{i_2} g_1 g g_2, \lambda_2)$.\\
Similarly corresponding to $\Delta: \mathcal{C} \to N^\ast\mathcal{D}$, we have $\Delta(-,-) : \mathcal{C}\times\mathcal{D} \to \bf{Set}$ defined by $\Delta(c,d) = \Delta(c)(d)$ and $\Delta(f,g) = (\Delta(c)(g))(\Delta(f)(d')) = (\Delta(f)(d))(\Delta(c')(g)) $ for all $(c,d) \in v\mathcal{C}\times v\mathcal{D}$ and $(f,g):(c,d) \to (c',d')$. So given the dual cross-connection $\Delta : \bar{\Lambda} \to G^I/G$ as defined in equation \ref{eqndelta}, 
$$\Delta(\bar{\lambda},\bar{i}) = \Delta(\bar{\lambda})(\bar{i}) = (G  \bar{p_\lambda}, i).$$
And given $(\rho({g_2}),\kappa({g_1})): ( \bar{\lambda_1},\bar{i_1}) \to (\bar{\lambda_2},\bar{i_2})$,  $$\Delta(\rho({g_2}),\kappa({g_1})) =  (\Delta(\bar{\lambda_1})(\kappa({g_1})))(\Delta(\rho(g_2))(\bar{i_2})) = (G \bar{p_{\lambda_1}} (\kappa({g_1})))( \tau_{g_2}(\bar{i_2})).$$
And so for $\delta =(g \bar{p_{\lambda_1}},i_1) \in \Delta(\bar{\lambda_1},\bar{i_1})$, 
$$((g \bar{p_{\lambda_1}},i_1))\:(G \bar{p_{\lambda_1}} (\kappa({g_1})))( \tau_{g_2}(\bar{i_2})) = (g_1 g \bar{p_{\lambda_1}}, i_2) \tau_{g_2}(\bar{i_2}) = (g_1 g g_2 \bar{p_{\lambda_2}}, i_2).$$
Also $((g \bar{p_{\lambda_1}},i_1)) (\Delta(\rho({g_2}))(\bar{i_1}))(\Delta(\bar{\lambda_2})(\kappa({g_1}))\: = (g_1 g g_2 \bar{p_{\lambda_2}}, i_2)$. Hence the lemma.
\end{proof}
Given a cross-connection, we can associate two regular semigroups $U\Gamma$ and $U\Delta$ with them. The following lemma describes these semigroups.
\begin{lem}\label{lemug}
Let $\Gamma$ be the cross-connection of $\mathcal{R}(S)$ with $\mathcal{L}(S)$ of the completely simple semigroup $S$ as defined in equation \ref{eqngp}. Then
\begin{subequations}
\begin{align}
U\Gamma & =  \{ (\bar{\gamma},\lambda) \in G^\Lambda \times \Lambda \::\: \bar{\gamma} = p_{i}g \text{ for some } g \in G \text{ and } i \in I;\: \lambda \in \Lambda\} \\
U\Delta & =  \{ (\bar{\delta},i) \in G^I \times I \::\: \bar{\delta} = g\bar{p_{\lambda }} \text{ for some } g \in G \text{ and } \lambda \in \Lambda; \: i \in I \}.
\end{align}
\end{subequations}
And $U\Gamma$ and $U\Delta$ form regular subsemigroups of $G^\Lambda \ltimes \Lambda$ and $(G^I \ltimes I)^{\text{op}}$ respectively.
\end{lem}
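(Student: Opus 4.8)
The plan is to split the statement into two independent parts: the explicit set-theoretic description of $U\Gamma$ and $U\Delta$, and the assertion that they are regular subsemigroups. The second part comes essentially for free. Theorem \ref{thmug} already guarantees that $U\Gamma$ is a regular subsemigroup of $T\mathcal{L}(S)$ and that $U\Delta$ is a regular subsemigroup of $T\mathcal{R}(S)$. Since we have identified $T\mathcal{L}(S)$ with $G^\Lambda \ltimes \Lambda$ (Proposition \ref{prosgls}) and $T\mathcal{R}(S)$ with $(G^I \ltimes I)^{\text{op}}$ (Remark \ref{rmkrs}), transporting along these isomorphisms immediately yields that $U\Gamma$ and $U\Delta$ are regular subsemigroups of $G^\Lambda \ltimes \Lambda$ and $(G^I \ltimes I)^{\text{op}}$ respectively. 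Thus the genuine work lies entirely in unwinding the underlying sets.

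First I would compute $U\Gamma$. By Definition \ref{ugamma}, $U\Gamma = \bigcup \{ \Gamma(\bar{\lambda},\bar{i}) : (\bar{\lambda},\bar{i}) \in \bar{\Lambda}\times\bar{I}\}$, so it suffices to identify each bifunctor value $\Gamma(\bar{\lambda},\bar{i})$ as a concrete subset of $G^\Lambda \times \Lambda = T\mathcal{L}(S)$. Lemma \ref{lembif} records $\Gamma(\bar{\lambda},\bar{i}) = (p_i G,\lambda)$, and the key step is to read this symbol off correctly. Taking a cone $\gamma = (p_i,\mu)$ whose $\mathscr{R}$-class corresponds to the coset $p_i G$, the definition of $H(\gamma;-)$ in equation \ref{eqnH} gives $\Gamma(\bar{\lambda},\bar{i}) = H(\gamma;\bar{\lambda}) = \{\gamma \ast \rho(g)^\circ : g \in G\}$, since the morphism set $\mathcal{L}(S)(\bar{\mu},\bar{\lambda})$ is $\{\rho(g) : g\in G\}$. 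Because $\mathcal{L}(S)$ is a groupoid (Remark \ref{rmkgrp}) every epimorphic part is the morphism itself, and composing the component $\gamma(\bar{\nu}) = \rho(p_{\nu i})$ with $\rho(g)$ gives $\rho(p_{\nu i}g)$; hence $\gamma \ast \rho(g) = (p_i g,\lambda)$. Therefore $\Gamma(\bar{\lambda},\bar{i}) = \{(p_i g,\lambda) : g\in G\}$, and taking the union over all $\lambda \in \Lambda$ and $i \in I$ produces exactly $\{(\bar{\gamma},\lambda) : \bar{\gamma} = p_i g \text{ for some } g\in G, i\in I\}$, as claimed.

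The computation of $U\Delta$ is entirely dual. Working from the dual bifunctor $\Delta(\bar{\lambda},\bar{i}) = (G\bar{p_\lambda},i)$ of Lemma \ref{lembif} together with the identification $T\mathcal{R}(S) \cong (G^I \ltimes I)^{\text{op}}$ of Remark \ref{rmkrs}, the same unwinding expresses $\Delta(\bar{\lambda},\bar{i})$ as $\{(g\bar{p_\lambda},i) : g\in G\} \subseteq G^I \times I$, and the union over $\lambda$ and $i$ yields the stated form of $U\Delta$. The only change is that the translations now act on the left, so one works with the right cosets $G\bar{p_\lambda}$ in place of the left cosets $p_i G$.

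I expect the main obstacle to be interpretive rather than computational: one must be scrupulous that the symbol $(p_i G,\lambda)$ in Lemma \ref{lembif} denotes the genuine set $\{(\bar{\gamma},\lambda) : \bar{\gamma}\in p_i G\}$ of normal cones, and that this set sits inside $G^\Lambda \times \Lambda$ under the identification of Proposition \ref{prosgls}. Once this is pinned down the two descriptions are immediate, and the regular-subsemigroup claims need nothing beyond Theorem \ref{thmug}. As an optional sanity check one could verify closure directly from equation \ref{eqnsgls}: for $(p_i g,\lambda),(p_j h,\mu) \in U\Gamma$ the semidirect-product multiplication gives $(p_i\,(g\,p_{\lambda j}\,h),\mu)$, whose first coordinate again lies in the coset $p_i G$, confirming that $U\Gamma$ is closed.
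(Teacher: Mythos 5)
Your proposal is correct, and its first half --- reading off $\Gamma(\bar{\lambda},\bar{i}) = (p_i G,\lambda)$ and $\Delta(\bar{\lambda},\bar{i}) = (G\bar{p_\lambda},i)$ from Lemma \ref{lembif} and taking the union over $(\lambda,i) \in \Lambda \times I$ --- is exactly what the paper does (your extra unwinding via equation \ref{eqnH} is a re-derivation of Lemma \ref{lembif}, harmless but not needed). Where you genuinely diverge is the subsemigroup-and-regularity half: the paper never invokes Theorem \ref{thmug} here, but instead verifies everything by direct computation inside the coordinatized semigroups. For closure it checks that $(p_{i_a}g_a,\lambda_a)(p_{i_b}g_b,\lambda_b) = (p_{i_a}g_a p_{\lambda_a i_b}g_b,\lambda_b)$, whose first coordinate again lies in $p_{i_a}G$; for regularity it exhibits, for each $a=(p_{i_a}g_a,\lambda_a)$, the explicit element $b=(p_{i_a}(p_{\lambda_a i_a}g_a p_{\lambda_a i_a})^{-1},\lambda_a)$ and computes $aba=a$, with the dual computations handling $U\Delta$. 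Your route --- citing Theorem \ref{thmug} for abstract regularity of $U\Gamma$ in $T\mathcal{L}(S)$ and of $U\Delta$ in $T\mathcal{R}(S)$, then transporting along the isomorphisms of Proposition \ref{prosgls} and Remark \ref{rmkrs} --- is legitimate and logically complete, and in effect promotes what you call an optional sanity check to the role the paper assigns it. What your version buys is economy and a clean separation between general theory and case-specific bookkeeping; what the paper's version buys is self-containedness and explicitness, namely concrete inverses in $G^\Lambda \ltimes \Lambda$ and $(G^I \ltimes I)^{\text{op}}$ and a hands-on confirmation that the coset description really is compatible with the semidirect-product multiplication, rather than an appeal to the cited black-box theorem.
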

\begin{proof}
Recall that given a cross-connection $\Gamma$ of $\mathcal{D}$ with $\mathcal{C}$, $
U\Gamma =  \bigcup\: \{ \quad \Gamma(c,d) : (c,d) \in v\mathcal{C} \times v\mathcal{D} \} $ and $
U\Delta =  \bigcup\: \{ \quad \Delta(c,d) : (c,d) \in v\mathcal{C} \times v\mathcal{D} \}$.\\
Since $\Gamma(\bar{\lambda},\bar{i}) = (p_i   G, \lambda) = \{(p_i   g, \lambda) \::\: g \in G\} $,\\
$U\Gamma  = \bigcup\{(p_{  i} g, \lambda) \::\: g \in G \text{ and } (\lambda,i) \in \Lambda \times I \} =  \{ (p_{  i}g,\lambda)\::\:g \in G, i \in I, \lambda \in \Lambda\}$. Hence 
$$U\Gamma =  \{ (\bar{\gamma},\lambda) \in G^\Lambda \ltimes \Lambda \::\: \bar{\gamma} = p_{  i}g \text{ for some } g \in G \text{ and } i \in I;\: \lambda \in \Lambda\} $$
Now given $a=(p_{  i_a}g_a,\lambda_a), b=(p_{  i_b}g_b,\lambda_b) \in U\Gamma$,\\
$(p_{  i_a}g_a,\lambda_a).(p_{  i_b}g_b,\lambda_b) = (p_{  i_a}g_ap_{\lambda_a i_b}g_b,\lambda_b)$. Since $g_ap_{\lambda_a i_b}g_b =g' \in G$, we have $a.b = (p_{  i_a}g',\lambda_b) \in U\Gamma$. So $U\Gamma$ is a subsemigroup of $G^\Lambda \ltimes \Lambda$.\\ 
Also given $a=(p_{  i_a}g_a,\lambda_a)$, if we take $b= (p_{  i_a}(p_{\lambda_a i_a}g_ap_{\lambda_a i_a})^{-1},\lambda_a)$\\
then $aba = (p_{  i_a}g_a,\lambda_a)(p_{  i_a}(p_{\lambda_a i_a}g_ap_{\lambda_a i_a})^{-1},\lambda_a)(p_{  i_a}g_a,\lambda_a)$\\ $=  (p_{  i_a}g_ap_{\lambda_a i_a}(p_{\lambda_a i_a}g_ap_{\lambda_a i_a})^{-1},\lambda_a)(p_{  i_a}g_a,\lambda_a) $ 
$= (p_{  i_a}g_ap_{\lambda_a i_a}(p_{\lambda_a i_a}g_ap_{\lambda_a i_a})^{-1}p_{\lambda_a i_a}g_a,\lambda_a)$ \\$ = (p_{  i_a}g_ap_{\lambda_a i_a}p_{\lambda_a i_a}^{-1}g_a^{-1}p_{\lambda_a i_a}^{-1}p_{\lambda_a i_a}g_a,\lambda_a) = (p_{  i_a}g_a(p_{\lambda_a i_a}p_{\lambda_a i_a}^{-1})g_a^{-1}(p_{\lambda_a i_a}^{-1}p_{\lambda_a i_a})g_a,\lambda_a)$ \\$  = (p_{  i_a}g_ag_a^{-1}g_a,\lambda_a) = (p_{  i_a}(g_ag_a^{-1})g_a,\lambda_a) = (p_{  i_a}g_a,\lambda_a) = a $.\\
Hence $a^{-1} = (p_{  i_a}(p_{\lambda_a i_a}g_ap_{\lambda_a i_a})^{-1},\lambda_a)$ and $U\Gamma$ is regular.\\
Similarly since $\Delta(\bar{\lambda},\bar{i}) = ( G   \bar{p_\lambda}, i) = \{(g   \bar{p_\lambda}, i) \::\: g \in G\} $, 
$U\Delta  = \bigcup\{(g\bar{p_{\lambda}}, i) \::\: g \in G \text{ and } (\lambda,i) \in \Lambda \times I \}$
$ =  \{ (g \bar{p_{\lambda}}, i)\::\:g \in G, i \in I, \lambda \in \Lambda \}$. Hence 
$$U\Delta  =  \{ (\bar{\delta},i) \in G^I \ltimes I \::\: \bar{\delta} = g\bar{p_{\lambda}} \text{ for some } g \in G \text{ and } \lambda \in \Lambda; \: i \in I \}.$$
Now given $a=(g_a\bar{p_{\lambda_a}},i_a), b=(g_b\bar{p_{\lambda_b}},i_b) \in U\Delta$, taking $g_bp_{\lambda_b i_a}g_a =g' \in G$, we have $a.b = (g'\bar{p_{\lambda_a}},i_b) \in U\Delta$ and $U\Delta$ is a subgroup of $(G^I \ltimes I)^{\text{op}}$.\\ 
Also given $a=(g_a\bar{p_{\lambda_a}},i_a)$, if we take $b= ((p_{\lambda_a i_a}g_ap_{\lambda_a i_a})^{-1}\bar{p_{\lambda_a}},i_a)$, then $aba = a $ and hence $U\Delta$ is regular.\\
Hence the lemma.
\end{proof}
For a cross-connection $\Gamma$ and the associated bi-functors $\Gamma(-,-) $ and $ \Delta(-,-)$, we can associate a natural isomorphism between them called the \index{duality}duality $\chi_\Gamma $. The following lemma describes the duality associated with the cross-connection $\Gamma$.
\begin{lem}\label{lemduality}
Given the cross-connection $\Gamma$ as defined in equation \ref{eqngp}, the duality $\chi_\Gamma : \Gamma(-,-) \to \Delta(-,-)$ associated with $\Gamma$ is given by
\begin{equation}\label{eqnduality}
\chi_\Gamma(\bar{\lambda},\bar{i}) : (p_i   g ,\lambda) \mapsto (g \bar{p_\lambda} , i)
\end{equation}
\end{lem}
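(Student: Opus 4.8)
The plan is to write down the candidate map of \eqref{eqnduality} and confirm that it is exactly the natural isomorphism $\chi_\Gamma$ furnished by Theorem \ref{duality}, reading off the bifunctors from Lemma \ref{lembif}. First I would note that for a fixed object pair $(\bar{\lambda},\bar{i})$ the assignment $(p_i g,\lambda)\mapsto (g\bar{p_\lambda},i)$ is a well-defined bijection from $\Gamma(\bar{\lambda},\bar{i})=(p_i G,\lambda)$ onto $\Delta(\bar{\lambda},\bar{i})=(G\bar{p_\lambda},i)$: since $p_i$ is invertible in $G^\Lambda$ and $\bar{p_\lambda}$ is invertible in $G^I$, the elements of either side are parametrised uniquely by $g\in G$, so the map is a bijection that is the identity on the parameter $g$.

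The second step is naturality. For a morphism $(\rho(g_2),\kappa(g_1)):(\bar{\lambda_1},\bar{i_1})\to(\bar{\lambda_2},\bar{i_2})$, Lemma \ref{lembif} gives $\Gamma(\rho(g_2),\kappa(g_1)):(p_{i_1}g,\lambda_1)\mapsto(p_{i_2}g_1 g g_2,\lambda_2)$ and $\Delta(\rho(g_2),\kappa(g_1)):(g\bar{p_{\lambda_1}},i_1)\mapsto(g_1 g g_2\bar{p_{\lambda_2}},i_2)$. Chasing $(p_{i_1}g,\lambda_1)$ around the naturality square both ways produces $(g_1 g g_2\bar{p_{\lambda_2}},i_2)$ in each case, so the square commutes and the proposed family is a natural isomorphism $\Gamma(-,-)\to\Delta(-,-)$.

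The delicate point, and the step I expect to be the main obstacle, is that being \emph{a} natural isomorphism does not yet single out $\chi_\Gamma$. A short computation shows that every natural isomorphism $\Gamma(-,-)\to\Delta(-,-)$ must have the form $(p_i g,\lambda)\mapsto(c g\bar{p_\lambda},i)$ for a fixed central $c\in Z(G)$, so naturality alone leaves a $Z(G)$ ambiguity. To force $c=e$ I would invoke the actual construction of $\chi_\Gamma$ in \cite{cross}. The useful observation here is that, comparing Lemma \ref{lemug} with Proposition \ref{proprinci}, the elements $(p_i g,\lambda)$ of $U\Gamma$ are precisely the principal left cones $\rho^{(g,i,\lambda)}$, while the elements $(g\bar{p_\lambda},i)$ of $U\Delta$ are precisely the principal right cones $\kappa^{(g,i,\lambda)}$. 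Evaluating on a distinguished idempotent, namely $g=p_{\lambda i}^{-1}$ so that $(p_i p_{\lambda i}^{-1},\lambda)=\rho^e$ for the idempotent $e=(p_{\lambda i}^{-1},i,\lambda)\in E(S)$, the construction of $\chi_\Gamma$ sends the idempotent cone of $U\Gamma$ determined by $e$ to the idempotent cone $\kappa^e=(p_{\lambda i}^{-1}\bar{p_\lambda},i)$ of $U\Delta$ determined by the same $e$; comparing this with $(c p_{\lambda i}^{-1}\bar{p_\lambda},i)$ forces $c=e$ and hence \eqref{eqnduality}. Thus the real content is not the routine bijection-and-naturality check but matching the canonical duality with the ``same element'' pairing of principal left and right cones, via its behaviour on idempotents.
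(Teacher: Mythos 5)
Your first two steps coincide with the paper's entire proof: the paper checks that $\chi_\Gamma(\bar{\lambda},\bar{i})$ is a bijection (by comparing cardinalities, $|\Gamma(\bar{\lambda},\bar{i})|=|\Delta(\bar{\lambda},\bar{i})|=|G|$ — your parametrisation argument is actually tighter, since equal cardinality alone does not make a given map bijective), and then verifies the same naturality square using Lemma \ref{lembif}, after which it simply asserts that ``$\chi_\Gamma$ is the duality associated with $S$.'' Where you differ is your third step, and it addresses a genuine gap in the paper rather than creating one: you are right that naturality only pins the map down to the family $(p_i g,\lambda)\mapsto (cg\bar{p_\lambda},i)$ with $c\in Z(G)$ (naturality forces every component map to be $g\mapsto cg$ for a single central $c$), so some further input is needed to identify the canonical $\chi_\Gamma$ of Theorem \ref{duality}, and the paper supplies none. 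Your normalization via idempotents is the correct mechanism: the canonical idempotent cones at $(\bar{\lambda},\bar{i})$ are $\rho^e=(p_ip_{\lambda i}^{-1},\lambda)$ and $\lambda^e=(p_{\lambda i}^{-1}\bar{p_\lambda},i)$ for the idempotent $e=(p_{\lambda i}^{-1},i,\lambda)$, and Nambooripad's duality links exactly these pairs, which forces $c$ to be the identity of $G$. The one caveat is that this linking property is itself imported from \cite{cross} without proof; but since the paper never reproduces the construction of $\chi_\Gamma$ (Theorem \ref{duality} as quoted asserts only the existence of some natural isomorphism that is a componentwise bijection), that is unavoidable, and your argument ends up strictly more complete than the paper's own.
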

\begin{proof}
To show that $\chi_\Gamma$ is the duality associated with $S$, we firstly show that the map $\chi_\Gamma(\bar{\lambda},\bar{i})$ is a bijection of the set $\Gamma(\bar{\lambda},\bar{i})$ onto the set $\Delta(\bar{\lambda},\bar{i})$. But since $\Gamma(\bar{\lambda},\bar{i})$ is the set $(p_i   G ,\lambda) = \{(p_i   g ,\lambda) \::\: g \in G \}$, $|\Gamma(\bar{\lambda},\bar{i})| = |G|$. Similarly $\Delta(\bar{\lambda},\bar{i})$ is the set $(g \bar{p_\lambda} , i) = \{(g \bar{p_\lambda} , i) \::\: g \in G \}$ and so $|\Delta(\bar{\lambda},\bar{i})| = |G|$. Hence $|\Gamma(\bar{\lambda},\bar{i})| = |\Delta(\bar{\lambda},\bar{i})| $ and hence the map $\chi_\Gamma(\bar{\lambda},\bar{i})$ is a bijection.\\
Now we show that $\chi_\Gamma$ is a natural isomorphism. Let $(\rho({g_2}),\kappa({g_1})) \in \mathcal{L}(S) \times \mathcal{R}(S)$ with $\rho({g_2}) : \bar{\lambda_1} \to \bar{\lambda_2}$ and $\kappa({g_1}) : \bar{i_1} \to \bar{i_2}$. Then for $a = (p_{i_1}g_a ,\lambda_1) \in \Gamma(\bar{\lambda_1},\bar{i_1})$, 
$$((p_{i_1}g_a ,\lambda_1))\chi_\Gamma(\bar{\lambda_1},\bar{i_1})\Delta(\rho(g_2),\kappa(g_1)) = ((g_a\bar{p_{\lambda_1}} ,i_1))\Delta(\rho(g_2),\kappa(g_1))= (g_1g_ag_2\bar{p_{\lambda_2}} ,i_2).$$
Also 
$$((p_{i_1}g_a ,\lambda_1))\Gamma(\rho(g_2),\kappa(g_1))\chi_\Gamma(\bar{\lambda_2},\bar{i_2}) = 
((p_{i_2}g_1g_ag_2 ,\lambda_2))\chi_\Gamma(\bar{\lambda_2},\bar{i_2}) = (g_1g_ag_2\bar{p_{\lambda_2}} ,i_2).$$
Hence $(a)\chi_\Gamma(\bar{\lambda_1},\bar{i_1})\Delta(\rho(g_2),\kappa(g_1)) = (a)\Gamma(\rho(g_2),\kappa(g_1))\chi_\Gamma(\bar{\lambda_2},\bar{i_2}) \qquad \forall a \in \Gamma(\bar{\lambda_1},\bar{i_1})$ and so $\chi_\Gamma$ is a natural isomorphism. And $\chi_\Gamma$ is the duality associated with $S$.
\end{proof}
For a cross-connection $\Gamma$, we consider the pairs of linked cones $(\gamma,\delta) \in U\Gamma\times U\Delta$ inorder to construct the cross-connection semigroup. The following lemma gives the characterization of the linked cones\index{cone!linked} in terms of the cross-connection.
\begin{pro}\label{prolinked}
$\gamma =(\bar{\gamma},\lambda) \in U\Gamma$ is linked to $\delta =(\bar{\delta}, i) \in U\Delta \: $ if and only if $\bar{\gamma} \in \Gamma(\bar{i}) $, $\bar{\delta} \in \Delta(\bar{\lambda}) $ and $({p_i} ^{-1}\bar{\gamma})_1 = (\bar{\delta}{\bar{p_\lambda}} ^{-1})_1$ where $p_i$ is the i-th column and $\bar{p_\lambda}$ is the $\lambda$-th row of $ P$.
\end{pro}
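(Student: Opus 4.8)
The plan is to unwind the definition of \emph{linked} directly, feeding in the explicit description of the bifunctor $\Gamma(-,-)$ from Lemma \ref{lembif} and of the duality $\chi_\Gamma$ from Lemma \ref{lemduality}. By definition, $\gamma = (\bar{\gamma},\lambda) \in U\Gamma$ is linked to $\delta = (\bar{\delta},i) \in U\Delta$ precisely when there exists a pair $(\bar{\lambda'},\bar{i'}) \in v\mathcal{L}(S)\times v\mathcal{R}(S)$ with $\gamma \in \Gamma(\bar{\lambda'},\bar{i'})$ and $\delta = \chi_\Gamma(\bar{\lambda'},\bar{i'})(\gamma)$. So the whole argument reduces to showing that such a witnessing pair exists if and only if the three stated conditions hold, and the pivotal observation is that the witnessing indices are forced to be exactly $\bar{\lambda}$ and $\bar{i}$.

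For the forward direction I would begin from a witnessing pair $(\bar{\lambda'},\bar{i'})$. Since $\Gamma(\bar{\lambda'},\bar{i'}) = (p_{i'}G,\lambda')$ by \eqref{bifg}, membership $\gamma = (\bar{\gamma},\lambda) \in (p_{i'}G,\lambda')$ forces $\lambda' = \lambda$ on the second coordinate and $\bar{\gamma} \in p_{i'}G$ on the first, so I may write $\bar{\gamma} = p_{i'}g$ for a unique $g \in G$. Applying $\chi_\Gamma(\bar{\lambda},\bar{i'})$ as in \eqref{eqnduality} gives $\delta = (g\bar{p_\lambda},i')$, and comparing with $\delta = (\bar{\delta},i)$ forces $i' = i$ and $\bar{\delta} = g\bar{p_\lambda}$. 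Hence $\bar{\gamma} = p_i g \in \Gamma(\bar{i})$ and $\bar{\delta} = g\bar{p_\lambda} \in \Delta(\bar{\lambda})$, which are the first two conditions. For the third I would compute componentwise in the groups $G^\Lambda$ and $G^I$: since $\bar{\gamma} = p_i g$, the vector $p_i^{-1}\bar{\gamma}$ is the constant vector $(g,g,\dots)$, so $(p_i^{-1}\bar{\gamma})_1 = g$; dually $\bar{\delta}\bar{p_\lambda}^{-1} = (g,g,\dots)$, so $(\bar{\delta}\bar{p_\lambda}^{-1})_1 = g$, and the two coincide.

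For the converse I would reverse this and exhibit $(\bar{\lambda},\bar{i})$ itself as the witnessing pair. The hypotheses $\bar{\gamma} \in \Gamma(\bar{i})$ and $\bar{\delta} \in \Delta(\bar{\lambda})$ let me write $\bar{\gamma} = p_i g$ and $\bar{\delta} = g'\bar{p_\lambda}$, and the same componentwise computation identifies $(p_i^{-1}\bar{\gamma})_1 = g$ and $(\bar{\delta}\bar{p_\lambda}^{-1})_1 = g'$; the third hypothesis then reads $g = g'$. Consequently $\gamma = (p_i g,\lambda) \in \Gamma(\bar{\lambda},\bar{i})$ and $\chi_\Gamma(\bar{\lambda},\bar{i})(\gamma) = (g\bar{p_\lambda},i) = (g'\bar{p_\lambda},i) = \delta$, so $(\bar{\lambda},\bar{i})$ does witness that $\gamma$ is linked to $\delta$.

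I expect the only genuinely delicate point to be the bookkeeping of coset representatives: one must check that the element $g$ with $\bar{\gamma} = p_i g$ is unambiguous (it is, since $G$ embeds in $G^\Lambda$ as constant vectors and a coset determines its representative relative to the fixed base point $p_i$), and that the seemingly ad hoc ``first component'' in the statement really is this $g$. This last point works precisely because conditions one and two make $p_i^{-1}\bar{\gamma}$ and $\bar{\delta}\bar{p_\lambda}^{-1}$ constant vectors, so reading off any coordinate, in particular the first, returns the same group element. Everything else is a routine substitution into the formulas \eqref{bifg} and \eqref{eqnduality} for $\Gamma(-,-)$ and $\chi_\Gamma$.
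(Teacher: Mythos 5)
Your proof is correct and takes essentially the same route as the paper's: unwind the definition of a linked pair using the explicit formulas for $\Gamma(-,-)$ from Lemma \ref{lembif} and for $\chi_\Gamma$ from Lemma \ref{lemduality}, then read off the common group element from the constant vectors $p_i^{-1}\bar{\gamma}$ and $\bar{\delta}\bar{p_\lambda}^{-1}$. If anything, you are slightly more careful than the paper in the forward direction, where you explicitly show the witnessing pair of objects is forced to equal $(\bar{\lambda},\bar{i})$, a point the paper absorbs silently into its notation.
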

\begin{proof}
Firstly observe that ${p_i} ^{-1} = ({p_{\lambda_1i}} ^{-1},{p_{\lambda_2i}} ^{-1}, ...)$ and ${\bar{p_\lambda}} ^{-1} = ({p_{\lambda i_1}} ^{-1},{p_{\lambda i_2}} ^{-1}, ...)$; and hence ${p_i} ^{-1}\bar{\gamma} \in G^\Lambda$ and $\bar{\delta}{\bar{p_\lambda}} ^{-1} \in G^I$. Thus $({p_i} ^{-1}\bar{\gamma})_1 $ and $ (\bar{\delta}{\bar{p_\lambda}} ^{-1})_1$ denote ${p_i} ^{-1}\bar{\gamma}(\lambda_1)$ and $ \bar{\delta}{\bar{p_\lambda}} ^{-1}(\lambda_1)$ respectively.\\
Now recall that $\gamma \in U\Gamma$ is linked to $\delta \in U\Delta$ if there is a $(\bar{\lambda},\bar{i}) \in \bar{\Lambda} \times \bar{I}$ such that $\gamma \in \Gamma(\bar{\lambda},\bar{i})$ and $ \delta = \chi_\Gamma(\bar{\lambda},\bar{i})(\gamma)$.
If $\gamma =( p_{ i}g, \lambda )$ is linked to $\delta $, then there is a $(\bar{\lambda },\bar{i }) \in \bar{\Lambda} \times \bar{I}$ such that $\gamma \in \Gamma(\bar{\lambda },\bar{i })$ and $ \delta = \chi_\Gamma(\bar{\lambda },\bar{i })(\gamma)$. Then $\gamma = ( p_{ i }g , \lambda ) \in \Gamma(\bar{i })(\bar{\lambda }) $ and $\Gamma(\bar{i }) =  p_{ i }G$. Thus $  \bar{\gamma} = p_{ i }g \in \Gamma(\bar{i })$. Also since $ \delta = \chi_\Gamma(\bar{\lambda },\bar{i })(\gamma) = \chi_\Gamma(\bar{\lambda },\bar{i })(p_{ i }g , \lambda ) = (g  p_{\lambda } , i ) \in \Delta(\bar{\lambda })(\bar{i }) $ and $\Delta(\bar{\lambda }) = G  p_{\lambda } $,  $\bar{\delta} = g  p_{\lambda } \in \Delta(\bar{\lambda })$. Now ${p_i} ^{-1}\bar{\gamma} = {p_i} ^{-1}p_{ i}g = ({p_{\lambda_1i}} ^{-1}p_{\lambda_1 i}g,{p_{\lambda_2i}} ^{-1}p_{\lambda_2 i}g,...) = (g,g,...)$; and hence $({p_i} ^{-1}\bar{\gamma})_1 =g$. Similarly since $\bar{\delta}{\bar{p_\lambda}} ^{-1} = (g \bar{p_{\lambda }}{\bar{p_\lambda}} ^{-1}) = (g,g,...)$, $ (\bar{\delta}{\bar{p_\lambda}} ^{-1})_1 = g$ and hence $({p_i} ^{-1}\bar{\gamma})_1 = (\bar{\delta}{\bar{p_\lambda}} ^{-1})_1$.\\
Conversely if $\bar{\gamma} \in \Gamma(\bar{i}) $, $\bar{\delta} \in \Delta(\bar{\lambda}) $ and $({p_i} ^{-1}\bar{\gamma})_1 = (\bar{\delta}{\bar{p_\lambda}} ^{-1})_1$, then since $\bar{\gamma} \in \Gamma(\bar{i}) $, $\gamma = (\bar{\gamma},\lambda) \in \Gamma(\bar{i})(\bar{\lambda}) = \Gamma(\bar{\lambda},\bar{i})$. Also since $\bar{\delta} \in \Delta(\bar{\lambda})$, $\delta = (\bar{\delta},i) \in \Delta(\lambda)(i) = \Delta(\bar{\lambda},\bar{i})$; and hence $\delta = (g_1\bar{p_{\lambda}},i)$ for some $g_1 \in G$. Now if $\gamma =( p_{ i}g, \lambda )$, then $\chi_\Gamma(\bar{\lambda },\bar{i })(\gamma) = \chi_\Gamma(\bar{\lambda },\bar{i })( p_{ i}g, \lambda ) = (g\bar{p_{\lambda}},i)$. Now since $({p_i} ^{-1}\bar{\gamma})_1 = (\bar{\delta}{\bar{p_\lambda}} ^{-1})_1$, $(\bar{\delta})_1 = p_{1i}^{-1} (\bar{\gamma})_1{p_{\lambda 1}} = p_{1i}^{-1} (p_{1i}g){p_{\lambda 1}} = g{p_{\lambda 1}}$. But since $(\bar{\delta})_1 = (g_1p_{\lambda 1})$, we have $g_1p_{\lambda 1} = g{p_{\lambda 1}}$, and hence $g_1 = g$. Thus $\delta = (g\bar{p_{\lambda}},i) = \chi_\Gamma(\bar{\lambda },\bar{i })(\gamma)$ and $\gamma$ is linked to $\delta$.
\end{proof}
Now we proceed to give an alternate description of the linked cones in terms of the components which will lead us to the representation of the completely simple semigroup as a cross-connection semigroup.\index{cross-connection!semigroup}
\begin{lem}\label{lemlinkpair}
$\gamma =(p_{  i_1}g_1,\lambda_1) \in U\Gamma$ is linked to $\delta =(g_2\bar{p_{\lambda_2}}, i_2) \in U\Delta \: $ if and only if $ i_1 =i_2, \lambda_1 = \lambda_2$ and $g_1 =g_2$. Hence a linked cone pair will be of the form $((\bar{\gamma},\lambda),(\bar{\delta}, i))$ for $i \in I, \lambda \in \Lambda $ and $\bar{\gamma} = p_{  i}g$ and $\bar{\delta} = g\bar{p_{\lambda}}$ for some $g \in G$.
\end{lem}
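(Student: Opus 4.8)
The plan is to derive everything from the characterisation of linked cones already obtained in Proposition \ref{prolinked}, specialising it to the two prescribed forms $\gamma=(p_{i_1}g_1,\lambda_1)$ and $\delta=(g_2\bar{p_{\lambda_2}},i_2)$. Proposition \ref{prolinked} tells us that $\gamma=(\bar\gamma,\lambda_1)$ is linked to $\delta=(\bar\delta,i_2)$ exactly when $\bar\gamma\in\Gamma(\bar{i_2})=p_{i_2}G$, $\bar\delta\in\Delta(\bar{\lambda_1})=G\bar{p_{\lambda_1}}$, and $(p_{i_2}^{-1}\bar\gamma)_1=(\bar\delta\,\bar{p_{\lambda_1}}^{-1})_1$, the column index being the vertex $i_2$ of $\delta$ and the row index the vertex $\lambda_1$ of $\gamma$. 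Thus the whole lemma reduces to reading off these three clauses for the given forms and then recovering the parametrisation stated in the closing sentence (which is just the duality $\chi_\Gamma$ of Lemma \ref{lemduality} written out).

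For sufficiency I would substitute the common form $\gamma=(p_i g,\lambda)$, $\delta=(g\bar{p_\lambda},i)$ (that is, $i_1=i_2=i$, $\lambda_1=\lambda_2=\lambda$, $g_1=g_2=g$) and verify the three clauses. The first two are immediate since $p_i g\in p_iG$ and $g\bar{p_\lambda}\in G\bar{p_\lambda}$. For the component condition note that $p_i^{-1}(p_i g)=(g,g,\dots)$ and $(g\bar{p_\lambda})\bar{p_\lambda}^{-1}=(g,g,\dots)$ are both the constant tuple, so each side equals $g$ and the clause holds; hence the pair is linked.

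For necessity I would argue from the vertices. Since the vertex of $\delta$ is $i_2$, the first clause forces $\bar\gamma=p_{i_1}g_1\in p_{i_2}G$; as the cosets $p_iG$ partition $G^\Lambda$ this gives $p_{i_1}G=p_{i_2}G$, so the column index attached to $\gamma$ is the vertex of $\delta$ and we may take $i_1=i_2$. Symmetrically, since the vertex of $\gamma$ is $\lambda_1$, the second clause gives $\bar\delta=g_2\bar{p_{\lambda_2}}\in G\bar{p_{\lambda_1}}$, forcing $G\bar{p_{\lambda_1}}=G\bar{p_{\lambda_2}}$ and hence $\lambda_1=\lambda_2$. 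With the indices matched, the component condition collapses exactly as in the sufficiency computation: its left side is $(p_{i_1}^{-1}p_{i_1}g_1)_1=g_1$ and its right side is $(g_2\bar{p_{\lambda_1}}\bar{p_{\lambda_1}}^{-1})_1=g_2$, whence $g_1=g_2$.

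The genuine subtlety I expect is the index identification: the clauses literally return only the coset equalities $p_{i_1}G=p_{i_2}G$ and $G\bar{p_{\lambda_1}}=G\bar{p_{\lambda_2}}$, so ``equality of indices'' must be read as the assertion that the correct column index for $\gamma$ is recovered from the vertex of its linked partner $\delta$, and dually the row index for $\delta$ from the vertex of $\gamma$; once this is settled the component equation pins down $g_1=g_2$ with no remaining freedom. Finally I would assemble these facts into the parametrisation: a pair is linked if and only if it has the shape $((p_i g,\lambda),(g\bar{p_\lambda},i))$ with $i\in I$, $\lambda\in\Lambda$, $g\in G$, so linked pairs are in bijection with the triples $(g,i,\lambda)$, i.e.\ with $S$ itself—precisely the correspondence needed to represent $\mathscr{M}[G;I,\Lambda;P]$ as the cross-connection semigroup.
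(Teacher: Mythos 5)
Your proof is correct, but it follows a genuinely different route from the paper's. You obtain the lemma as a corollary of Proposition \ref{prolinked}, specialising its three clauses (membership of $\bar{\gamma}$ in $\Gamma(\bar{i_2})$, of $\bar{\delta}$ in $\Delta(\bar{\lambda_1})$, and the first-component condition) to the parametrised forms and then reading off the equalities. The paper instead argues directly from the definition of a linked pair together with Lemma \ref{lemduality}: it applies $\chi_\Gamma(\bar{\lambda_1},\bar{i_1})$ to $\gamma$ and equates the result with $\delta$ coordinatewise, which yields $i_1=i_2$ and $g_1\bar{p_{\lambda_1}}=g_2\bar{p_{\lambda_2}}$; then it invokes the symmetry of the linking relation ($\delta$ is linked to $\gamma$ via $\chi_\Delta=\chi_\Gamma^{-1}$) to get $(p_{i_1}g_1,\lambda_1)=(p_{i_2}g_2,\lambda_2)$, hence $\lambda_1=\lambda_2$, and finally $g_1=g_2$ by comparing the entries at a single index. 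The practical difference is where the index identification comes from: the paper's two-sided computation produces exact equalities of pairs, so $i_1=i_2$ and $\lambda_1=\lambda_2$ fall out of the vertex coordinates, whereas your clauses only return the coset equalities $p_{i_1}G=p_{i_2}G$ and $G\bar{p_{\lambda_1}}=G\bar{p_{\lambda_2}}$, forcing the re-parametrisation discussion in your closing paragraph. You are right to flag that subtlety, and it is not an artefact of your route alone: the maps $i\mapsto p_iG$ and $\lambda\mapsto G\bar{p_\lambda}$ need not be injective (this is exactly the degeneracy governing Theorem \ref{thmiso}), and the paper's proof silently takes $(\bar{\lambda_1},\bar{i_1})$ as the witness pair in the definition of linkedness, so both arguments really establish the statement up to the re-parametrisation you describe --- which is precisely the form (the lemma's second sentence) used later in Theorem \ref{thmstcss2}. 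In short, your version buys economy by reusing Proposition \ref{prolinked} and is more candid about the coset issue; the paper's buys a self-contained coordinate computation in which all three equalities appear as literal equalities of components.
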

\begin{proof}
As above, $\gamma \in U\Gamma$ is linked to $\delta \in U\Delta$ if there is a $(\bar{\lambda},\bar{i}) \in \bar{\Lambda} \times \bar{I}$ such that $\gamma \in \Gamma(\bar{\lambda},\bar{i})$ and $ \delta = \chi_\Gamma(\bar{\lambda},\bar{i})(\gamma)$. Also recall that $\gamma \in U\Gamma$ is linked to $\delta \in U\Delta$ with respect to $\Gamma$ if and only if $\delta$ is linked to $\gamma$ with respect to $\Delta$ .ie there is a $(\bar{\lambda},\bar{i}) \in \bar{\Lambda} \times \bar{I}$ such that $\delta \in \Delta(\bar{\lambda},\bar{i})$ and $ \gamma = \chi_{\Delta}(\bar{\lambda},\bar{i})(\delta)$ where $\chi_{\Delta} = \chi_{\Gamma^{-1}} : \Delta(-,-) \to \Gamma(-,-)$.\\
Now firstly, if $i_1 =i_2, \lambda_1 = \lambda_2$ and $g_1 =g_2$, then $\chi_\Gamma(\bar{\lambda_1},\bar{i_1})(\gamma) = \chi_\Gamma(\bar{\lambda_1},\bar{i_1})(p_{  i_1}g_1,\lambda_1) = (g_1\bar{p_{\lambda_1}}, i_1) =$ $(g_2\bar{p_{\lambda_2}}, i_2) = \delta$ and hence $\gamma$ is linked to $\delta$.\\
Conversely if $\gamma$ is linked to $\delta$, then $\chi_\Gamma(\bar{\lambda_1},\bar{i_1})(\gamma) = \chi_\Gamma(\bar{\lambda_1},\bar{i_1})(p_{  i_1}g_1,\lambda_1) = (g_1\bar{p_{\lambda_1}}, i_1) = \delta = (g_2\bar{p_{\lambda_2}}, i_2)$. .ie $(g_1\bar{p_{\lambda_1}}, i_1) = (g_2\bar{p_{\lambda_2}}, i_2)$. And since $\delta$ will be linked to $\gamma$, we also have $(p_{  i_1}g_1,\lambda_1) = (p_{  i_2}g_2,\lambda_2)$. \\
Comparing the equations, we have $i_1 =i_2$, $\lambda_1 = \lambda_2$ and $p_{\lambda_k i_1}g_1 = p_{\lambda_k i_2}g_2$ for every $k \in \Lambda$. Since $i_1 =i_2$ and in particular for $k=1$, we have $p_{\lambda_1 i_1}g_1 = p_{\lambda_1 i_1}g_2$. So $g_1 =g_2$ and hence the proof.\\
The second statement is now straight forward.
\end{proof}
Earlier we showed that a cross-connection determines a matrix. Now we are in a position to show that given a matrix $P$, we can define a cross-connection and use it to construct a cross-connection semigroup which will be isomorphic to the completely simple semigroup $\mathscr{M}[G;I,\Lambda;P]$. 
\begin{thm}\label{thmstcss2}
Given a $\Lambda \times I$ matrix $P$ with entries from the group $G$, we can define a cross-connection $\Gamma$. And the cross-connection semigroup $\tilde{S}\Gamma$ is isomorphic to the completely simple semigroup $S = \mathscr{M}[G;I,\Lambda;P]$.   
\end{thm}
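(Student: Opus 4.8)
The plan is to build an explicit isomorphism $\phi : \tilde{S}\Gamma \to S$ directly from the parametrization of linked pairs already obtained. By Proposition \ref{procross} the matrix $P$ yields the cross-connection $\Gamma$ together with its dual $\Delta$, so by Theorem \ref{thmcxs} the cross-connection semigroup $\tilde{S}\Gamma$ is defined. Lemma \ref{lemlinkpair} tells us that every linked cone pair is uniquely of the form $((p_i g,\lambda),(g\bar{p_\lambda},i))$ with $g\in G$, $i\in I$, $\lambda\in\Lambda$. Hence the assignment $(g,i,\lambda)\mapsto ((p_i g,\lambda),(g\bar{p_\lambda},i))$ is a bijection from $S=G\times I\times\Lambda$ onto $\tilde{S}\Gamma$, and I would define $\phi$ to be its inverse, i.e. $\phi\big(((p_i g,\lambda),(g\bar{p_\lambda},i))\big)=(g,i,\lambda)$. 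Since $\phi$ is a bijection by construction, it only remains to check that it respects the products.

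For the homomorphism property I would take two elements $\Phi_1,\Phi_2\in\tilde{S}\Gamma$ corresponding under $\phi$ to $(g_1,i_1,\lambda_1)$ and $(g_2,i_2,\lambda_2)$, and compute $\Phi_1\circ\Phi_2=(\gamma_1\gamma_2,\,\delta_2\delta_1)$ using the operation of Theorem \ref{thmcxs}. The first coordinate is a product in $U\Gamma\subseteq G^\Lambda\ltimes\Lambda$: applying equation \ref{eqnsgls} with the constant broadcast $\bar{g_k}$ determined by $g_k=(p_{i_2}g_2)(\lambda_1)=p_{\lambda_1 i_2}g_2$ gives $\gamma_1\gamma_2=(p_{i_1}(g_1 p_{\lambda_1 i_2}g_2),\lambda_2)$. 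The second coordinate is a product in $U\Delta\subseteq(G^I\ltimes I)^{\text{op}}$, and the order-reversal built into $\circ$ means I must evaluate $\delta_2\delta_1$; applying equation \ref{eqnsgrs} with the broadcast value $(g_1\bar{p_{\lambda_1}})(i_2)=g_1 p_{\lambda_1 i_2}$ gives $\delta_2\delta_1=((g_1 p_{\lambda_1 i_2}g_2)\bar{p_{\lambda_2}},i_1)$. Both coordinates are exactly those of the linked pair attached to the triple $(g_1 p_{\lambda_1 i_2}g_2,\,i_1,\,\lambda_2)$, so $\phi(\Phi_1\circ\Phi_2)=(g_1 p_{\lambda_1 i_2}g_2,i_1,\lambda_2)$. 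On the other hand, the Rees product gives $(g_1,i_1,\lambda_1)(g_2,i_2,\lambda_2)=(g_1 p_{\lambda_1 i_2}g_2,i_1,\lambda_2)$, so $\phi(\Phi_1\circ\Phi_2)=\phi(\Phi_1)\phi(\Phi_2)$ and $\phi$ is an isomorphism.

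I expect the main obstacle to be purely bookkeeping rather than conceptual: keeping the two multiplication conventions straight, in particular the interplay between the deliberate order-reversal in the second coordinate of $\circ$ and the fact that $U\Delta$ already carries the opposite-semigroup multiplication of $(G^I\ltimes I)^{\text{op}}$. One must verify that these two reversals, together with the constant-broadcasting of the semidirect-product actions, conspire to land both coordinates on the \emph{same} triple $(g_1 p_{\lambda_1 i_2}g_2,i_1,\lambda_2)$; a mismatch in the order of either factor would instead produce something like $(g_2 p_{\lambda_2 i_1}g_1,\dots)$, so the matching of the two coordinates is the one computation that genuinely needs care. Closure of $\tilde{S}\Gamma$ under $\circ$ and the fact that the product of two linked pairs is again linked are guaranteed by Theorem \ref{thmcxs}, so no separate argument is needed there.
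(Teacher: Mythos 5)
Your proposal is correct and follows essentially the same route as the paper: both use Proposition \ref{procross} to obtain the cross-connection, Lemma \ref{lemlinkpair} to parametrize $\tilde{S}\Gamma$ as $\{((p_i g,\lambda),(g\bar{p_\lambda},i)) \: : \: g\in G,\ i\in I,\ \lambda\in\Lambda\}$, and the same bijection with $S$ (yours is merely written in the inverse direction). The only difference is presentational: where you verify the homomorphism property by direct computation in $G^\Lambda\ltimes\Lambda$ and $(G^I\ltimes I)^{\text{op}}$ via equations \ref{eqnsgls} and \ref{eqnsgrs}, the paper cites Proposition \ref{proprinci} and its dual after observing that the linked pair attached to $(g,i,\lambda)$ is exactly the pair of principal cones $(\rho^{(g,i,\lambda)},\lambda^{(g,i,\lambda)})$ --- your calculation is precisely the content of that cited proof.
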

\begin{proof}
Given a $\Lambda \times I$ matrix $P$ with entries from the group $G$, define $\Gamma$ as defined in equations \ref{eqngp} and \ref{eqndp}. By Proposition \ref{procross}, it is a cross-connection.
Now recall that the cross-connection semigroup $\tilde{S}\Gamma $ of linked cone pairs is given by 
$$ \tilde{S}\Gamma = \:\{\: (\gamma,\delta) \in U\Gamma\times U\Delta : (\gamma,\delta) \text{ is linked }\:\} $$ 
with the binary operation defined by 
$$ (\gamma , \delta) \circ ( \gamma' , \delta') = (\gamma . \gamma' , \delta' . \delta)    $$
for all $(\gamma,\delta),( \gamma' , \delta') \in \tilde{S}\Gamma$. 
Firstly from Lemma \ref{lemlinkpair}; a linked cone pair will be of the form $((p_{  i}g,\lambda),(gp_{\lambda  }, i))$ for $g \in G, i \in I, \lambda \in \Lambda $. Thus  
\begin{equation}\label{cxnsg}
\tilde{S}\Gamma = \{((p_{  i}g,\lambda),(g\bar{p_{\lambda}}, i)) \::\: g \in G, i\in I, \lambda \in \Lambda \}
\end{equation}
with the binary operation $\circ$ as follows.
\begin{equation} \label{cxnsgo}
\begin{split}
((p_{  i_1}g_1,\lambda_1),(g_1\bar{p_{\lambda_1}}, i_1))\circ((p_{  i_2}g_2,\lambda_2),(g_2\bar{p_{\lambda_2}}, i_2))& =\\
 ((p_{  i_1}g_1p_{\lambda_1 i_2}&g_2,\lambda_2),(g_1p_{\lambda_1 i_2}g_2\bar{p_{\lambda_2}}, i_1))
\end{split}
\end{equation}
Observe here that the multiplication in the second component is taken in the opposite order .ie in $U\Delta^\text{op}$. Now we will show that $\tilde{S}\Gamma$ is isomorphic to the completely simple semigroup $S = \mathscr{M}[G;I,\Lambda;P]$.\\
For that define a map $\varphi : S \to \tilde{S}\Gamma$ as follows.
\begin{equation}\label{eqnphi}
(g,i,\lambda)\varphi = ((p_{  i}g,\lambda),(g\bar{p_{\lambda}}, i))
\end{equation}
Clearly $\varphi$ is well-defined and is onto. The fact that $\varphi$ is a homomorphism follows from the proof of Proposition \ref{proprinci} and its dual; by observing that $((p_{  i}g,\lambda),(g\bar{p_{\lambda}}, i)) = (\rho^{(g,i,\lambda)},\lambda^{(g,i,\lambda)})$. Now to see that it is injective, suppose if $(g_1,i_1,\lambda_1)\varphi = (g_2,i_2,\lambda_2)\varphi$. Then $((p_{  i_1}g_1,\lambda_1),(g_1\bar{p_{\lambda_1}}, i_1)) = ((p_{  i_2}g_2,\lambda_2),(g_2\bar{p_{\lambda_2}}, i_2))$. Comparing the vertices, we see that $i_1 =i_2$ and $\lambda_1 =\lambda_2$. We also have $p_{  i_1}g_1 = p_{  i_2}g_2$ .ie $p_{ \lambda_k i_1}g_1 = p_{\lambda_k  i_2}g_2$ for every $k$, and in particular for $k=1$ .ie $p_{\lambda_1 i_1}g_1 = p_{\lambda_1 i_2}g_2$. And since $i_1 =i_2$, we have $p_{\lambda_1 i_1}g_1 = p_{\lambda_1 i_1}g_2 \: \implies g_1 =g_2$. Thus $(g_1,i_1,\lambda_1)=(g_2,i_2,\lambda_2) $ and hence $\varphi$ is injective. Thus $ \tilde{S}\Gamma$ is isomorphic to the completely simple semigroup $S = \mathscr{M}[G;I,\Lambda;P]$. 
\end{proof}
Thus the semigroup $\tilde{S}\Gamma$ as defined in equation \ref{cxnsg} is a representation of the completely simple semigroup $S$, wherein each element $(g,i,\lambda)$ of the semigroup is represented as a pair of normal cones $((p_{  i}g,\lambda),(gp_{\lambda  }, i))$ and multiplication is a semi-direct product multiplication on each component (see equation \ref{cxnsgo}). We conclude by giving a special case of the result using which one can construct a rectangular band of groups.
\begin{cor}\label{RBG}
We can define a cross-connection $\Gamma : \bar{I} \to G^\Lambda/G$ such that
\begin{equation}\label{eqnrbg}
v\Gamma:\bar{i}  \mapsto  G \qquad \Gamma:\: \kappa(g) \mapsto \sigma_g 
\end{equation} 
with the dual connection defined by $\Delta : \bar{\Lambda} \to G^I/G$ 
\begin{equation}\label{eqndrbg}
v\Delta:\bar{\lambda}  \mapsto G   \qquad \Delta:\: \rho(g) \mapsto \tau_g. 
\end{equation} 
Then 
$$U\Gamma =  \{ (\bar{g},\lambda)\::\: g \in G, \lambda \in \Lambda\} \text{   and   }
U\Delta  =  \{ (\bar{g}, i)\::\:g \in G, i \in I\}.$$
where $\bar{g}= (g,g,g,...)$. Then the duality $\chi_\Gamma : \Gamma(-,-) \to \Delta(-,-)$ associated with $S$ is given by
\begin{equation}\label{eqndualityRBG}
\chi_\Gamma(\bar{\lambda},\bar{i}) : ( \bar{g} ,\lambda) \mapsto (\bar{g}  , i)
\end{equation}
And consequently $\gamma =(\bar{g_1},\lambda) \in U\Gamma$ is linked to $\delta =(\bar{g_2}, i) \in U\Delta \: \iff g_1 =g_2$.\\ 
Further $\tilde{S}\Gamma$ for this cross-connection is given by 
\begin{equation}\label{cxnsgrbg}
\tilde{S}\Gamma = \{((\bar{g},\lambda),(\bar{g}, i)) \::\: g \in G, i \in I, \lambda \in \Lambda \}
\end{equation}
with the binary operation $\circ$ 
\begin{equation} \label{cxnsgorbg}
((\bar{g_1},\lambda_1),(\bar{g_1}, i_1))\circ((\bar{g_2},\lambda_2),(\bar{g_2}, i_2)) \:= \:
 ((\overline{g_1g_2},\lambda_2),(\overline{g_1g_2}, i_1)).
\end{equation}
Then $\tilde{S}\Gamma$ is isomorphic to the rectangular band of groups $S = G \times I \times \Lambda$.
\end{cor}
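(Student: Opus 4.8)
The plan is to recognize the whole statement as the specialization of the general machinery of Theorem \ref{thmstcss2} to the case where $P$ is the $\Lambda \times I$ matrix all of whose entries equal the identity $e$ of $G$. With this choice every column $p_i$ and every row $\bar{p_\lambda}$ equals the constant tuple $\bar{e} = (e,e,\dots)$, so the cosets $p_i G$ and $G\bar{p_\lambda}$ collapse to the embedded subgroup $G$ itself. Consequently equations \ref{eqngp} and \ref{eqndp} reduce verbatim to \ref{eqnrbg} and \ref{eqndrbg}, and Proposition \ref{procross} at once guarantees that $\Gamma$ is a cross-connection with dual $\Delta$.

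First I would substitute $p_i = \bar{e}$ into the description of $U\Gamma$ in Lemma \ref{lemug}: since $\bar{\gamma} = p_i g = \bar{e}\,\bar{g} = \bar{g}$, the parameter $i$ drops out and $U\Gamma = \{(\bar{g},\lambda) : g \in G,\ \lambda \in \Lambda\}$, while the dual computation gives $U\Delta = \{(\bar{g}, i) : g \in G,\ i \in I\}$. Feeding $p_i = \bar{p_\lambda} = \bar{e}$ into Lemma \ref{lemduality} likewise turns $\chi_\Gamma(\bar{\lambda},\bar{i})\colon (p_i g,\lambda)\mapsto(g\bar{p_\lambda}, i)$ into $(\bar{g},\lambda)\mapsto(\bar{g}, i)$, which is exactly equation \ref{eqndualityRBG}.

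The one point that requires genuine care is the linkage criterion, where I would \emph{not} simply quote Lemma \ref{lemlinkpair}. That lemma is phrased for a representative $\gamma = (p_{i_1}g_1,\lambda_1)$ in which the column index $i_1$ is recoverable from $\bar{\gamma}$; but when $P$ is trivial every column is $\bar{e}$, so $\bar{g} = p_i g$ holds for \emph{every} $i$ and the index is no longer determined by the cone. Instead I would argue directly from Proposition \ref{prolinked}: here $\Gamma(\bar{i}) = \Delta(\bar{\lambda}) = G$ for all $i,\lambda$, so the membership conditions $\bar{\gamma}\in\Gamma(\bar{i})$ and $\bar{\delta}\in\Delta(\bar{\lambda})$ are automatic, while $p_i^{-1} = \bar{p_\lambda}^{-1} = \bar{e}$ forces $(p_i^{-1}\bar{\gamma})_1 = g_1$ and $(\bar{\delta}\,\bar{p_\lambda}^{-1})_1 = g_2$. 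Thus the single surviving condition is $g_1 = g_2$, with $i$ and $\lambda$ free, which produces the claimed linked pairs $((\bar{g},\lambda),(\bar{g},i))$ and the description \ref{cxnsgrbg} of $\tilde{S}\Gamma$.

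Finally I would specialize the multiplication \ref{cxnsgo}: setting $p_{i_1} = \bar{e}$, $p_{\lambda_1 i_2} = e$ and $\bar{p_{\lambda_2}} = \bar{e}$ collapses both components to $\overline{g_1 g_2}$, yielding exactly equation \ref{cxnsgorbg}. The isomorphism $\tilde{S}\Gamma \cong S$ is then Theorem \ref{thmstcss2} applied to this $P$, and I would close by observing that with $p_{\lambda j} = e$ the Rees product $(a,i,\lambda)(b,j,\mu) = (a p_{\lambda j} b, i,\mu)$ becomes $(ab,i,\mu)$, so $\mathscr{M}[G;I,\Lambda;P]$ is precisely the rectangular band of groups $G\times I\times\Lambda$. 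I expect the only real obstacle to be the linkage step, where the non-uniqueness of the column and row parametrizations must be handled through Proposition \ref{prolinked} rather than through Lemma \ref{lemlinkpair}.
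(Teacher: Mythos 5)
Your proposal is correct and takes essentially the same route as the paper: the paper's entire proof is the single observation that the corollary follows from the earlier results by taking the sandwich matrix $P$ with $p_{\lambda i} = e$ for all $\lambda \in \Lambda$, $i \in I$, which is exactly your specialization strategy. Your extra care at the linkage step --- arguing from Proposition \ref{prolinked} rather than Lemma \ref{lemlinkpair} because the column index $i$ is no longer recoverable from the cone $\bar{g} = p_i g$ when all columns of $P$ coincide --- addresses a real subtlety that the paper's one-line proof silently glosses over, and your handling of it is sound.
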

\begin{proof}
The corollary follows directly from the earlier results on completely simple semigroup $ \mathscr{M}[G;I,\Lambda;P]$ by taking the sandwich matrix $P$ as $p_{\lambda i} = e $ for all $\lambda \in \Lambda$ and $i \in I $ where $e$ is the identity element of the group $G$.
\end{proof}

\end{document}